\newtheorem{theorem}{Theorem}[section]
\newtheorem{question}[theorem]{Questions}
\newtheorem{lemma}[theorem]{Lemma}
\newtheorem{proposition}[theorem]{Proposition}
\newtheorem{corollary}[theorem]{Corollary}
\theoremstyle{definition}
\newtheorem{definition}[theorem]{Definition}
\numberwithin{equation}{section}
\newcommand{\fin}{$\mathbf{Fin}\ $}
\renewcommand{\r}{$\mathrm{R}^n$}
\newcommand{\ra}[1]{$\mathrm{R}^{#1}$}
\newcommand{\hb}[1]{$\mathrm{H}_{#1}^{\mathrm{B}}$}
\newcommand{\hd}[1]{$\mathrm{H}_{#1}^{\mathrm{D}}$}
\newcommand{\hi}[1]{$\mathrm{H}_{#1}$}
\newcommand{\zfc}{\mathnormal{\mathsf{ZFC}}}
\newcommand{\zf}{\mathnormal{\mathsf{ZF}}}
\newcommand{\zfa}{\mathnormal{\mathsf{ZFA}}}
\newcommand{\ac}{\mathnormal{\mathsf{AC}}}
\newcommand{\lo}{\mathnormal{\mathsf{LO}}}
\newcommand{\bpi}{\mathnormal{\mathsf{BPI}}}
\newcommand{\fs}{\mathop{\mathrm{FS}}\nolimits}
\newcommand{\fu}{\mathop{\mathrm{FU}}}
\newcommand{\hs}{\mathnormal{\mathbf{HS}}}
\DeclareMathOperator{\dom}{dom}
\DeclareMathOperator{\ran}{ran}
\DeclareMathOperator{\aut}{Aut}
\author[J. Brot]{Joshua Brot}
\address{
Department of Mathematics\\
University of Michigan\\
2074 East Hall, 530 Church Street \\
Ann Arbor, MI 48109-1043, U.S.A.}
\email{jbrot@umich.edu}
\author[M. Cao]{Mengyang Cao}
\address{
Department of Mathematics\\
University of Michigan\\
2074 East Hall, 530 Church Street\\
Ann Arbor, MI 48109-1043, U.S.A.}
\curraddr{
Northwestern University\\
633 Clark St\\
Evanston, IL 60208, U.S.A.
}
\email{mengyangcao2020@u.northwestern.edu}
\author[D. Fern\'andez]{David Fern\'andez-Bret\'on}
\address{
Department of Mathematics, University of Michigan; 
Kurt G\"odel Research Center for Mathematical Logic, University of Vienna; and Departamento de Matem\'aticas, Centro de Investigaci\'on y Estudios Avanzados.
}
\curraddr{
Instituto de Matem\'aticas\\
Universidad Nacional Aut\'onoma de M\'exico\\
\'Area de la Investigaci\'on Cient\'{\i}fica, Circuito Exterior, Ciudad Universitaria, Coyoac\'an, 04510,\\
CDMX, Mexico\\
}
\email{djfernandez@im.unam.mx}
\urladdr{http://homepage.univie.ac.at/david.fernandez-breton/}
\keywords{Fraenkel--Mostowski model, Axiom of Choice, Dedekind-finite set, Amorphous set, Ramsey's theorem, Hindman's theorem, Fra\"{\i}ss\'e limits.}
\subjclass[2010]{Primary 03E25; Secondary 03E30, 03E35, 03E75.}
\begin{document}

\title[Choiceless Ramsey finiteness classes]{Finiteness classes arising from Ramsey-theoretic statements in set theory without choice}

\begin{abstract}
We investigate infinite sets that witness the failure of certain Ramsey-theoretic statements, such as Ramsey's or (appropriately phrased) Hindman's theorem; such sets may exist if one does not assume the Axiom of Choice. We obtain very precise information as to where such sets are located within the hierarchy of infinite Dedekind-finite sets.
\end{abstract}

\maketitle

\section{Introduction}

A very interesting line of research in choiceless set theory consists of exploring the relations between the various different ways of expressing finiteness of a set. The starting point for this vein of research is the observation that Dedekind's definition of an infinite set~\cite[Definition 64, p. 63]{dedekind}, which in normal circumstances (i.e. when one assumes the Axiom of Choice, which will henceforth be denoted by $\ac$) is equivalent to the ``standard'' definition, is no longer equivalent to it if one drops $\ac$. We now proceed to state both definitions; in this paper, the notation $X\approx Y$ will denote that $X$ is \textbf{equipotent} to $Y$, i.e., that there is a bijective function between $X$ and $Y$.

\begin{definition}\label{def-findfin}
Let $X$ be a set.
\begin{enumerate}
\item We say that $X$ is \textbf{finite} if there exists an $n\in\omega$ such that $n\approx X$.
\item We say that $X$ is \textbf{D-finite}\footnote{The ``D'' here stands, of course, for Dedekind; it is also common in the literature to call D-finite sets Dedekind-finite.} if there is no proper subset $Y\subsetneq X$ such that $Y\approx X$; equivalently, $X$ is D-finite if every function $f:X\longrightarrow X$ that is injective is also surjective.
\item We say that $X$ is \textbf{infinite} or \textbf{D-infinite}, respectively, if $X$ is not finite or not D-finite, respectively.
\end{enumerate}
\end{definition}

Dedekind's idea to define finiteness as in Definition~\ref{def-findfin}~(2) arose from the very old observation, sometimes attributed to Galileo, that the set of natural numbers is in bijection with one of its proper subsets. In $\zf$ (that is, assuming all of the usual axioms of set theory ---the Zermelo--Fraenkel axioms---, except for $\ac$), it is known that a set $X$ is D-infinite if and only if it has a countable subset, that is, a subset $Y\subseteq X$ such that $\omega\approx Y$ (equivalently, $X$ is D-infinite if and only if there exists an injective function $f:\omega\longrightarrow X$). Also in $\zf$, it is very easy to see that every finite set is D-finite, and assuming $\ac$, the converse implication holds too. Such implication, however, is not provable in $\zf$ alone, as shown by Cohen in his seminal work~\cite[Chapter IV \S 9]{cohen-stcontinuum} where he first introduced the technique of forcing.

Nowadays, there is extensive knowledge of many different models of $\zf$ containing sets that are infinite but at the same time D-finite. Furthermore, prompted by a seminal paper of Tarski~\cite{tarski}, numerous other authors~\cite{levy-independencefiniteness,truss-classesofdedekind,spisiak-vojtas,howard-yorke,degen,goldstern-stronglyamorphous,herrlich-choice} have continued to investigate the various other possible definitions of finiteness, all of which are equivalent under $\zfc$, but which may be different in models of $\zf$; we now have abundant information about the implication relations between many of these possible definitions. Furthermore, in recent times other authors~\cite{degen,herrlich-finiteinfinite,herrlich-howard-tachtsis}, have shifted from studying multiple isolated definitions of finiteness, to studying a general notion of what an acceptable ``definition of finiteness'' might be, resulting in what is now known as a {\it finiteness class}. The general definition of a finiteness class, as stated below, is due to Herrlich~\cite[Definition 6]{herrlich-finiteinfinite}, and is equivalent to the negation of a {\it notion of infinity} as proposed by Degen~\cite[Definition 1]{degen}.

\begin{definition}\label{def-finitenessclass}
A \textbf{finiteness class} is a class $\mathscr F$ of sets satisfying the following four properties:
\begin{enumerate}
\item If $X\in\mathscr F$ and $Y\subseteq X$, then $Y\in\mathscr F$,
\item If $X\in\mathscr F$ and $Y\approx X$, then $Y\in\mathscr F$,
\item If $X$ is finite then $X\in\mathscr F$,
\item $\omega\notin\mathscr F$.
\end{enumerate}
\end{definition}

So finiteness classes are classes defined by a formula which in a sense provides a notion of smallness. Due to the last two clauses in Definition~\ref{def-finitenessclass}, the class \fin  of all finite sets constitutes the smallest finiteness class and the class {\bf D-}\fin of all Dedekind finite sets constitutes the largest finiteness class. We now proceed to state various definitions of finiteness that have been considered in the literature. Each of these definitions determines a finiteness class (that of all objects that satisfy the definition); as stated below, the first three of these definitions appear in~\cite[Def. 8]{herrlich-finiteinfinite} and the last one is from~\cite[Def. 1.3]{herrlich-howard-tachtsis}, although all of these concepts have appeared before in the literature under different names~\cite{truss-classesofdedekind,howard-yorke,degen,goldstern-stronglyamorphous}.

\begin{definition}\label{def-finiteness}
Let $X$ be a set.
\begin{enumerate}
\item We say that $X$ is \textbf{A-finite} if every subset of $X$ is either finite or co-finite (equivalently, $X$ cannot be partitioned into two infinite pieces\footnote{Traditionally, sets that are infinite and A-finite are known as {\em amorphous} sets.}),
\item we say that $X$ is \textbf{B-finite} if no infinite subset of $X$ is linearly orderable,
\item we say that $X$ is \textbf{C-finite} if there is no surjection $f:X\longrightarrow\omega$, and
\item we say that $X$ is \textbf{E-finite} if there is no proper subset $Y\subsetneq X$ which surjects onto $X$.
\item We say that $X$ is \textbf{A-infinite}, \textbf{B-infinite}, \textbf{C-infinite}, or \textbf{E-infinite}, respectively, if $X$ is not A-finite, not B-finite, not C-finite, or not E-finite, respectively.
\end{enumerate}
\end{definition}
The class {\bf X-}\fin of all X-finite sets is a finiteness class, for $\mathrm{X}\in\{\mathrm{A,B,C,E}\}$. It turns out that, in $\zf$, A-finite implies both B-finite and C-finite; C-finite implies E-finite, and none of these implications is reversible; furthermore, there are no other $\zf$-provable implications between these notions, other than the ones just mentioned. The reader interested in finding references for these results, as well as for tracking other names with which these notions of finiteness have appeared previously in the literature, should consult~\cite[Section 1]{herrlich-howard-tachtsis}.

In this paper, we consider finiteness classes defined in terms of the failure of certain Ramsey-theoretic statements. The general flavour of Ramsey theory is that every time one partitions some large enough structure (usually it is said that one ``colours'' the structure, each piece of the partition representing a different colour), one can find some interesting substructures completely contained within one piece of the partition (one talks about finding \textit{monochromatic} such substructures). Sometimes ``large enough'' means infinite; hence, it seems natural to analyze whether the failure of some of these kinds of statements at a given set provides us with a valid finiteness class\footnote{Another exciting line of research consists in investigating whether certain Ramsey-theoretic statements on ordinal numbers (that have been proved in $\zfc$) must still hold in the theory $\zf$, see e.g.~\cite{philipp-philipp-thilo-choicelessramsey}.}. This paper focuses on such an analysis for two different families of Ramsey-theoretic statements.

(A particular case of) Ramsey's classical theorem~\cite{ramsey} states that, given any infinite set $X$, whenever one colours the collection $[X]^2$ of $2$-element subsets of $X$ with $2$ colours, there exists an infinite subset $Y\subseteq X$ such that all $2$-element subsets of $Y$ have the same colour; in other words, for every $c:[X]^2\longrightarrow 2$ there exists an infinite $Y\subseteq X$ such that $c\upharpoonright[Y]^2$ is a constant function\footnote{One can think of $[X]^2$ as the edge set of a complete graph with vertex set $X$. Hence, Ramsey's theorem is usually phrased by stating that whenever one colours, with two colours, the edges of an infinite complete graph, it is possible to find an infinite induced subgraph involving only one of the colours.}. As shown by Kleinberg~\cite{kleinberg-ramsey}, proving this theorem necessarily requires some form of the Axiom of Choice; Blass~\cite{blass-on-ramsey} precisely located this principle among the hierarchy of weak forms of the Axiom of Choice. Without the axiom of choice, it is possible to have infinite sets without this property. Thus, the property of satisfying the negation of Ramsey's theorem becomes yet another notion of a set being ``small''; this allows us to define a notion of finiteness based on this property. Therefore we can say that a set $X$ is \textbf{R-finite} if it does not satisfy Ramsey's theorem; in other words, if there exists a colouring $c:[X]^2\longrightarrow 2$ such that for every infinite $Y\subseteq X$, the mapping $c\upharpoonright[Y]^2$ is not constant.

Since Ramsey's theorem for $\omega$ is provable in $\zf$, it is easy to see that every R-finite set must be D-finite; also, if $X$ is finite, it has no infinite subsets, and therefore any colouring of $[X]^2$ vacuously witnesses that $X$ is R-finite. Thus, the definition of R-finite constitutes a notion of finiteness which is intermediate between finite and D-finite. In this paper we will study various notions related to R-finiteness.

Another cornerstone of infinitary Ramsey theory is the statement known as Hindman's theorem. There are two ways of stating this result, and these two statements were known to imply each other even before either of them was known to be true (see~\cite{baumgartner-short-proof-of-hindman}; the translation from one to the other is based on the fact that natural numbers are naturally identified with finite subsets of $\omega$ by considering the set of positions in which the binary expansion of a natural number features a non-zero digit); below we state these two results.

\begin{theorem}[\cite{hindmanhisthm}]\label{hindmanthm}
\hfill
\begin{enumerate}
\item\label{hindmansums} For every colouring $c:\mathbb N\longrightarrow 2$ of the natural numbers $\mathbb N$ with $2$ colours, there exists an infinite set $X\subseteq\mathbb N$ such that all elements of the set
\begin{equation*}
\fs(X)=\left\{\sum_{x\in F}x\bigg|F\in[X]^{<\omega}\setminus\{\varnothing\}\right\}
\end{equation*}
(the set of all sums of finitely many elements of $X$) have the same colour.

\item\label{hindmanunions} For every colouring $c:[\omega]^{<\omega}\longrightarrow 2$ of all finite subsets of $\omega$ with $2$ colours, there exists an infinite set $X\subseteq[\omega]^{<\omega}$, whose elements are pairwise disjoint, such that all elements of the set
\begin{equation*}
\fu(X)=\left\{\bigcup_{x\in F}x\bigg|F\in[X]^{<\omega}\setminus\{\varnothing\}\right\}
\end{equation*}
(the set of all unions of finitely many elements of $X$) have the same colour.
\end{enumerate}
\end{theorem}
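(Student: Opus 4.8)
The plan is to establish the two statements by first reducing them to one another and then proving the Finite Sums form~\eqref{hindmansums} via the Galvin--Glazer ultrafilter argument. For the reduction one identifies each natural number $n$ with the finite set $\mathrm{supp}(n)\subseteq\omega$ of positions where the binary expansion of $n$ is nonzero, a bijection $\mathbb{N}\to[\omega]^{<\omega}$ sending $0$ to $\varnothing$. The point that makes the reduction work is that whenever $x_1,x_2,\dots$ satisfy $\max\mathrm{supp}(x_k)<\min\mathrm{supp}(x_{k+1})$ for all $k$, no carrying occurs in their sums, so the sets $\mathrm{supp}(x_k)$ are pairwise disjoint and $\mathrm{supp}\bigl(\sum_{k\in F}x_k\bigr)=\bigcup_{k\in F}\mathrm{supp}(x_k)$ for every finite nonempty $F$. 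Assuming~\eqref{hindmansums}, a colouring $c:[\omega]^{<\omega}\to 2$ pulls back along $\mathrm{supp}$ to a colouring of $\mathbb{N}$; one extracts an infinite $X$ with $\fs(X)$ monochromatic, passes to a subsequence obeying the gap condition above (possible because the elements of $X$, hence the least elements of their supports, tend to infinity), and the $\mathrm{supp}$-images of that subsequence form an infinite pairwise disjoint family whose $\fu$ is monochromatic. Assuming instead~\eqref{hindmanunions}, a colouring of $\mathbb{N}$ transports to $[\omega]^{<\omega}$ with no effort, and if the resulting disjoint family is $\{F_k:k\in\omega\}$ then, with $\mathrm{supp}(x_k)=F_k$, one has $\bigcup_{k\in S}F_k=\mathrm{supp}\bigl(\sum_{k\in S}x_k\bigr)$, so $\{x_k:k\in\omega\}$ witnesses~\eqref{hindmansums}.

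To prove~\eqref{hindmansums} I would pass to $\beta\mathbb{N}$, the set of ultrafilters on $\mathbb{N}$, equipped with the operation $p+q=\{A\subseteq\mathbb{N}:\{n\in\mathbb{N}:A-n\in q\}\in p\}$, where $A-n=\{m\in\mathbb{N}:m+n\in A\}$. The first steps are to verify that this operation is well defined and associative and that, for each fixed $p$, the map $q\mapsto q+p$ is continuous in the usual compact Hausdorff topology on $\beta\mathbb{N}$; thus $(\beta\mathbb{N},+)$ is a compact right-topological semigroup. The Ellis--Numakura lemma (a minimal nonempty closed subsemigroup exists by Zorn's lemma, and any such is a single idempotent) then produces $p$ with $p+p=p$, necessarily nonprincipal since $p_n+p_n=p_{2n}$. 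Given $c:\mathbb{N}\to 2$, one colour class $A$ lies in $p$; writing $A^{\star}=\{n\in A:A-n\in p\}$, idempotency yields $A^{\star}\in p$ and $A^{\star}-n\in p$ for every $n\in A^{\star}$. One then recursively chooses $x_1\in A^{\star}$ and, given $x_1<\dots<x_k$ with $\fs(\{x_1,\dots,x_k\})\subseteq A^{\star}$, picks $x_{k+1}>x_k$ in $A^{\star}\cap\bigcap\{A^{\star}-s:s\in\fs(\{x_1,\dots,x_k\})\}$, which belongs to $p$ and so is infinite; a short check gives $\fs(\{x_1,\dots,x_{k+1}\})\subseteq A^{\star}$. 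The infinite set $X=\{x_k:k\in\omega\}$ then satisfies $\fs(X)\subseteq A^{\star}\subseteq A$, so $c$ is constant on $\fs(X)$.

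The one genuinely nontrivial ingredient is the existence of an idempotent ultrafilter, i.e., the Ellis--Numakura lemma together with the verification that $(\beta\mathbb{N},+)$ is a compact right-topological semigroup; the elementary properties of $A^{\star}$, the recursive extraction of the sequence, and the translation between the two forms (modulo care with carrying in binary) are then routine. A completely combinatorial alternative avoiding $\beta\mathbb{N}$ is available through Baumgartner's notion of a ``large'' set, but I expect the ultrafilter route to be the more economical one to write out in full.
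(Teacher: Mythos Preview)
The paper does not prove this theorem at all: it is stated as a background result with a citation to Hindman's original paper~\cite{hindmanhisthm}, and the equivalence of the two formulations is likewise attributed (to Baumgartner~\cite{baumgartner-short-proof-of-hindman}) rather than proved. Your sketch is a correct account of the standard Galvin--Glazer ultrafilter proof together with the binary-support translation between the $\fs$ and $\fu$ versions, so there is no mathematical gap.

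One point worth flagging, given the paper's concerns: later on (in the proof of Theorem~\ref{equivalencefour}, implication (1)$\Rightarrow$(2)) the paper explicitly invokes ``Hindman's theorem on $[\omega]^{<\omega}$ (which is provable in $\zf$)''. Your chosen route goes through the Ellis--Numakura lemma and hence through Zorn's lemma applied to closed subsemigroups of $\beta\mathbb N$, so it is not a $\zf$ proof. You acknowledge Baumgartner's combinatorial alternative at the end; for the purposes of this paper that alternative (or Hindman's original argument) is the relevant one, since the applications downstream take place in a choiceless setting.
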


The first statement in Theorem~\ref{hindmanthm} involves the addition operation in $\mathbb N$, so attempting to generalize it to other sets requires ensuring that said sets can be endowed with a suitable semigroup operation. Working in $\zf$, one can trivially endow every set $X$ with {\em some} semigroup operation, e.g. defining $x*y=y$ for every $x,y\in X$; however, this particular semigroup operation trivializes Hindman's theorem: with this semigroup operation we would have, given $Y\subseteq X$, that any product of finitely many elements of $Y$ is itself an element of $Y$ (the rightmost one in the product) and so the analog of $\fs(Y)$ is simply $Y$---thus making the analog of Hindman's theorem equivalent to the pigeonhole principle. If one attempts, however, to equip a set $X$ with a semigroup operation with nicer properties, similar to the ones enjoyed by the addition on $\mathbb N$, one might run into difficulties if working in $\zf$: it is a classical theorem of Hajnal and Kert\'esz~\cite{hajnal-kertesz} that the statement ``every set can be equipped with a cancellative commutative semigroup operation'', as well as the statement ``every set can be equipped with a cancellative semigroup operation'', are equivalent to the Axiom of Choice (it is also well-known\footnote{The interested reader is referred to~\cite[Chapter I.6]{rubin-rubin} for more statements, referring to the possibility of endowing every set with certain algebraic structure, that are equivalent to $\ac$.} that $\ac$ is equivalent to the statement ``every set can be equipped with a group operation''). Hence, if one wants to work in $\zf$, it seems more fruitful to focus on the second statement from Theorem~\ref{hindmanthm}, since this statement does not make any reference to a semigroup operation, and analogs of this statement for any set $X$ can be readily formulated---in fact, the class of all sets satisfying such a statement is easily seen to form a finiteness class. Thus we will say that a set $X$ is \textbf{H-infinite} if for every colouring $c:[X]^{<\omega}\longrightarrow 2$ there exists an infinite, pairwise disjoint family $Y\subseteq[X]^{<\omega}$ such that the set $\fu(Y)$ is monochromatic, and \textbf{H-finite} otherwise. In this paper we will also study various notions of finiteness that are closely related to the one just defined.

Thus, the main objective of this paper is to analyze in detail various finiteness classes, all of them closely related either to the class of R-finite sets, or to the class of H-finite sets, as defined above. We managed to get fairly complete information regarding which of these classes are (provably in $\zf$) contained in one another. The second section of this paper deals with the various finiteness classes that arise from versions of Ramsey's theorem, whereas the third section deals with those that arise from various ways of stating Hindman's theorem; in both sections we focus on implications that are provable in $\zf$. Finally, the fourth section delves deep into a study of various Fraenkel--Mostowski permutation models, which allows us to establish what implications between the notions of finiteness considered here are not provable in $\zf$. There is a short fifth section mentioning a few questions that remain open.

\section{Flavours of Ramsey finiteness}

In the introduction we mentioned Ramsey's theorem for pairs of elements. A more general version of Ramsey's theorem (provable in $\zfc$) states that, given any infinite set $X$, and any finite number $n<\omega$, for every colouring $c:[X]^n\longrightarrow 2$ of the collection $[X]^n$ of $n$-element subsets of $X$ with two colours, one can find an infinite $Y\subseteq X$ such that $c\upharpoonright[Y]^n$ is a constant function. Hence, one can define infinitely many finiteness classes arising from Ramsey's theorem, one for each natural number $n$; we do so in the definition below.

\begin{definition}
Let $X$ be a set and let $n\in\mathbb N$.
\begin{enumerate}
\item We say that $X$ is \textbf{\r-finite} if it does not satisfy Ramsey's theorem for $n$-element subsets; in other words, $X$ is \r-finite if there exists a colouring $c:[X]^n\longrightarrow 2$ such that for every infinite $Y\subseteq X$, the mapping $c\upharpoonright[Y]^n$ is not constant.
\item We say that $X$ is \textbf{\r-infinite} if it is not \r-finite.
\end{enumerate}
\end{definition}

We start with some fairly easy observations. First of all note that, by the pigeonhole principle (or rather, by the statement, provable in $\zf$, that a finite union of finite sets must be finite), every infinite set is \ra{1}-infinite. In particular, the class \ra{1}-\fin of all \ra{1}-finite sets is a finiteness class (that coincides with the class of all finite sets); for $n\in\mathbb N\setminus\{1\}$, it is easy to see that the class \r-\fin of all \r-finite sets also constitutes a finiteness class, but in this case, as we will show, the corresponding finiteness class is (consistently) properly between the class of all finite sets and the class of all D-finite sets.

Surprisingly, we have not been able to find any implication between the \r-finiteness and the \ra{m}-finiteness of a given set, for $n\neq m$, unless one assumes certain additional structure on that set. The usual tricks for deriving Ramsey's theorem for $n$-tuples from Ramsey's theorem for $m$-tuples, which work when the underlying set is $\omega$, seem to rely on the linear orderliness of $\omega$ (in the case when $n<m$) and possibly also its well-orderliness (in the case where $n=m+1$ and one proceeds by induction). As we see below, assuming at least a linear order in our set allows us to carry out one of these arguments.

\begin{proposition}\label{prop:rn-to-rn+1}
Suppose that $X$ is a linearly orderable set. If $X$ is \r-finite then $X$ is \ra{n+1}-finite (and consequently $X$ will be \ra{m}-finite for all $m>n$).
\end{proposition}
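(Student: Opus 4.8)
The plan is to use the classical trick for reducing Ramsey's theorem for $(n{+}1)$-element subsets to Ramsey's theorem for $n$-element subsets, which does go through in $\zf$ provided the underlying set carries a linear order (this avoids any use of a well-ordering, which would be needed for the more delicate induction on exponents). Fix a linear order $<$ on $X$. Suppose $X$ is \r-finite, witnessed by a colouring $c:[X]^n\longrightarrow 2$ with the property that no infinite $Y\subseteq X$ has $c\upharpoonright[Y]^n$ constant. We must produce a colouring $d:[X]^{n+1}\longrightarrow 2$ such that no infinite $Y\subseteq X$ has $d\upharpoonright[Y]^{n+1}$ constant.

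First I would define $d$ as follows. Given an $(n{+}1)$-element subset $s=\{x_0<x_1<\cdots<x_n\}\subseteq X$ (listed in increasing $<$-order, using the fixed linear order), set $d(s)=c(\{x_0,x_1,\dots,x_{n-1}\})$; that is, $d$ colours an $(n{+}1)$-set by the $c$-colour of the $n$-set obtained by deleting its $<$-largest element. (Symmetrically one could delete the least element; either works.) Note this definition makes sense precisely because $<$ linearly orders $X$, so every finite subset has a well-defined increasing enumeration.

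Next I would verify the key claim: if $Y\subseteq X$ is infinite and $d\upharpoonright[Y]^{n+1}$ is constant with value $i\in 2$, then $c\upharpoonright[Y']^n$ is constant with value $i$ for a suitable infinite $Y'\subseteq Y$, contradicting the choice of $c$. The point is that for \emph{any} $n$-element subset $t=\{x_0<\cdots<x_{n-1}\}\subseteq Y$, if there exists $y\in Y$ with $y>x_{n-1}$ (equivalently, if $t$ is not a $<$-final segment of $Y$ among $n$-sets), then $t\cup\{y\}\in[Y]^{n+1}$ and its increasing enumeration has $t$ as the result of deleting the top element, so $c(t)=d(t\cup\{y\})=i$. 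Thus $c(t)=i$ for every $n$-element $t\subseteq Y$ that has an element of $Y$ above all of its members. Since $Y$ is infinite, the set $Y'$ of elements of $Y$ that are \emph{not} among the $<$-largest $n-1$ elements of $Y$ is cofinite in $Y$ — in fact, if $Y$ has no $<$-maximum the problematic $n$-sets are those whose top element is maximal in $Y$, and one argues that $Y$ with finitely many of its $<$-top elements removed (or, if $Y$ has no maximum, all of $Y$) still works; in any case one extracts an infinite $Y'\subseteq Y$ every $n$-subset of which has a $Y$-element strictly above it, whence $c\upharpoonright[Y']^n\equiv i$. This contradicts R-finiteness of $X$ via $c$, so no such $Y$ exists, and $d$ witnesses that $X$ is \ra{n+1}-finite.

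The main obstacle — really the only point requiring care rather than routine checking — is the bookkeeping around the $<$-top elements of an infinite $Y$: depending on whether $<\upharpoonright Y$ has a maximum, second-to-maximum, etc., one must argue without choice that after deleting the (finitely many) offending top elements of $Y$ one still has an infinite set, all of whose $n$-subsets are "non-final" in the sense above. This is genuinely elementary in $\zf$ since removing finitely many elements from an infinite set leaves an infinite set, and "the $<$-largest $k$ elements of $Y$" is a definable finite set requiring no choice; but it is the step where the argument for general $n$ (as opposed to the baby case $n=1$, where $[Y]^1=Y$ and the deletion trick is transparent) needs to be spelled out. Finally, the parenthetical "consequently \ra{m}-finite for all $m>n$" follows by iterating the proposition, noting that a linearly orderable set remains linearly orderable, so the hypothesis is preserved at each step.
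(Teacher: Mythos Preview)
Your proof is correct and follows essentially the same approach as the paper's: the paper argues by contrapositive and deletes the $\leq$-\emph{minimum} rather than the maximum, but this is a symmetric and immaterial difference. One remark: your bookkeeping worry is overcomplicated---you need only remove the single $<$-maximum of $Y$ (if it exists), not the top $n-1$ elements, since that removed maximum then serves as the witness $y$ lying strictly above every $n$-subset of $Y'=Y\setminus\{\max Y\}$ (and if $Y$ has no maximum, $Y'=Y$ already works).
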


\begin{proof}
We proceed by contrapositive, so let us assume that $X$ is \ra{n+1}-infinite, and fix a linear order $\leq$ on $X$. Now suppose that we have a colouring $c:[X]^n\longrightarrow 2$. Define a colouring $d:[X]^{n+1}\longrightarrow 2$ by $d(x)=c(x\setminus\{\min(x)\})$, where the minimum is taken with respect to the linear ordering $\leq$ (such a minimum exists because $x$ is a finite set). By assumption, we obtain an infinite $Y\subseteq X$ such that $[Y]^{n+1}$ is monochromatic for $d$, and let $i$ be the corresponding colour. Define
\begin{equation*}
Y'=\begin{cases}
Y\setminus\{\min(Y)\};\text{ if }Y\text{ has a minimum}, \\
Y;\text{ otherwise}.
\end{cases}
\end{equation*}
We claim that $[Y']^n$ is monochromatic for $c$ on colour $i$. To see this, take an arbitrary $y\in[Y']^n$. Let $y'$ be an element of $Y$ which is smaller than every element of $y$ according to the linear order $\leq$ (our definition of $Y'$ ensures that we can always find such a $y'$). Then $\{y'\}\cup y\in[Y]^{n+1}$ and therefore $i=d(\{y'\}\cup y)=c((\{y'\}\cup y)\setminus\{\min(\{y'\}\cup y)\})=c((\{y'\}\cup y)\setminus\{y'\})=c(y)$, and we are done. Hence, $[Y']^n$ is monochromatic (on colour $i$) and so $X$ is \r-infinite, which finishes the proof.
\end{proof}

We will prove in Section~\ref{Sect:Independence} that the converse of Proposition~\ref{prop:rn-to-rn+1} does not hold. Namely, Corollary~\ref{bpi-model} shows that, in general, \ra{m}-finite does not imply \r-finite when $n<m$, even for linearly ordered sets (in fact, not even if one assumes the Boolean Prime Ideal theorem). The following proposition uses a well-known idea---utilizing a partition of a set to colour its pairs of elements---to relate \r-finiteness and the possibility of choosing elements from a partition into finite sets.

\begin{proposition}\label{russellsetrfinite}
Let $n\in\mathbb N\setminus\{1\}$, and let $X$ be an \r-infinite set. If $\mathscr F$ is any infinite partition of $X$ into finite pieces, then there exists an infinite subfamily of $\mathscr F$ that admits a choice function.
\end{proposition}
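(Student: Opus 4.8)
The plan is to convert the partition $\mathscr F$ into a $2$-colouring of $[X]^n$ and then apply the hypothesis that $X$ is \r-infinite to it. Specifically, I would define $c\colon[X]^n\longrightarrow 2$ by putting $c(s)=0$ whenever $s$ contains two distinct elements lying in a common piece of $\mathscr F$, and $c(s)=1$ otherwise, that is, exactly when the $n$ members of $s$ are spread across $n$ pairwise distinct pieces of $\mathscr F$. Since $\mathscr F$ is part of the given data, no appeal to choice is needed to specify $c$; hence \r-infiniteness of $X$ yields an infinite set $Y\subseteq X$ on which $c$ is constant, with some value $i\in 2$.

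The first observation is that infinitely many pieces of $\mathscr F$ must meet $Y$: otherwise $Y$ would be contained in a finite union of finite sets and thus be finite, contradicting $\omega\approx Y$ (this uses only the $\zf$-fact that a finite union of finite sets is finite). In particular at least $n$ pieces meet $Y$, so I may choose $n$ elements of $Y$ lying in $n$ distinct pieces; the $n$-element set they form has colour $1$, which forces $i=1$. Next I would argue that each piece of $\mathscr F$ contains at most one point of $Y$: if some $F\in\mathscr F$ contained distinct points $a,b\in Y$, then---using that $Y$ is infinite and $n\geq 2$---I could adjoin $n-2$ further points of $Y$ to $\{a,b\}$, obtaining a set $s\in[Y]^n$ with $c(s)=0$, contradicting $i=1$.

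With these two facts the conclusion follows at once. Let $\mathscr F'=\{F\in\mathscr F:F\cap Y\neq\varnothing\}$. It is a subfamily of $\mathscr F$, it is infinite by the first observation, and, since each of its members meets $Y$ in exactly one point, the assignment sending each $F\in\mathscr F'$ to the unique element of $F\cap Y$ is a choice function on $\mathscr F'$.

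I do not expect a serious obstacle here. The one point that needs care is arranging $c$ so that the alternative ``$Y$ is homogeneous in colour $0$'' is genuinely impossible, and this is exactly where the hypotheses are used: the finiteness of the pieces forces infinitely many of them to meet the infinite set $Y$ (which both rules out colour $0$ and makes $\mathscr F'$ infinite), while the assumption $n\geq 2$ is what allows one to witness, by means of a single $n$-element set, that two points of $Y$ share a piece.
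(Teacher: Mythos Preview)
Your proof is correct and follows essentially the same approach as the paper's, with the roles of the colours $0$ and $1$ interchanged. One small slip worth fixing in this choiceless context: you write ``contradicting $\omega\approx Y$'', but in $\zf$ an infinite set need not be equipotent with $\omega$; your parenthetical (a finite union of finite sets is finite) is already the correct justification, so simply say ``contradicting that $Y$ is infinite''.
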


\begin{proof}
Given the partition $\mathscr F$ of the set $X$, define a colouring $c:[X]^n\longrightarrow 2$ of the $n$-element subsets of $X$ by
\begin{equation*}
c(x)=\begin{cases}1;\text{ if }(\exists F\in\mathcal F)(|x\cap F|\geq 2); \\
0;\text{ if }(\forall F\in\mathcal F)(|x\cap F|\leq 1).\end{cases}
\end{equation*}
Since $X$ is \r-infinite, let $Y\subseteq X$ be infinite such that $[Y]^n$ is monochromatic in some colour $i\in 2$. The assumption that each $F\in\mathcal F$ is finite implies that the subfamily $\mathcal F'=\{F\in\mathcal F\big|Y\cap F\neq\varnothing\}$ is infinite. Choosing distinct $F_1,\ldots,F_n\in\mathcal F'$ and choosing elements $y_i\in F_i$, one gets an $n$-tuple $y=\{y_1,\ldots,y_n\}\in[Y]^n$ with $c(y)=0$; therefore we have that $i=0$. This implies that $|Y\cap F|=1$ for each $F\in\mathcal F'$, hence $Y$ is a selector for $\mathcal F'$.
\end{proof}

As a particular case of Proposition~\ref{russellsetrfinite}, we obtain that Russell sets\footnote{A {\bf Russell set} is a set that can be partitioned into countably many cells of cardinality $2$, in such a way that no infinite subfamily of the partition admits a choice function. The terminology arises from B. Russell's observation that a family of infinitely many pairs of socks has no choice function (assuming that both socks within a pair are always indistinguishable; this assumption is quite controversial).} are \r-finite for every $n\geq 2$. Now, recall that a set is said to be {\em amorphous} if it is both infinite and A-finite. The rest of the section will be devoted to show that the converse of Proposition~\ref{russellsetrfinite} holds for amorphous sets. This will allow us to thoroughly analyze the relation between amorphous sets and \r-finite or \r-infinite sets, for various $n\in\mathbb N\setminus\{1\}$. We begin with a lemma that deals with amorphous graphs.

\begin{lemma}\label{componentsgraph}
Let $G=(V,E)$ be a locally finite graph, with $V$ an amorphous set. Then, each connected component of $G$ is finite.
\end{lemma}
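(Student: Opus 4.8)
The plan is to argue by contradiction. Suppose some connected component $C$ of the locally finite graph $G=(V,E)$ is infinite. Since $V$ is amorphous and $C\subseteq V$, the complement $V\setminus C$ must be finite (amorphousness forbids partitioning $V$ into two infinite pieces). So $C$ itself is an infinite, hence amorphous, connected, locally finite graph; replacing $G$ by $C$ we may assume $G$ is connected. Now fix any vertex $v_0\in V$ and consider the ``ball'' function $r\mapsto B_r$, where $B_r$ is the set of vertices at graph-distance $\le r$ from $v_0$. The key observation is that each $B_r$ is finite: this follows by induction on $r$ using local finiteness ($B_0=\{v_0\}$ is finite, and $B_{r+1}$ is contained in the union of the finitely many neighbourhoods of the finitely many vertices of $B_r$, hence is finite). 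Since $G$ is connected, $V=\bigcup_{r<\omega}B_r$, so $V$ is a countable union of finite sets.

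The contradiction I want is that this exhibits $V$ as a D-infinite set, or more directly, that it lets me split $V$ into two infinite pieces, contradicting amorphousness. Concretely, consider the ``sphere'' sets $S_r=B_r\setminus B_{r-1}$ (with $S_0=\{v_0\}$); these are finite, pairwise disjoint, and their union is $V$. Since $V$ is infinite and each $S_r$ is finite, infinitely many of the $S_r$ are nonempty, so $\{r<\omega\mid S_r\neq\varnothing\}$ is an infinite subset of $\omega$; enumerate it as $r_0<r_1<r_2<\cdots$. Then $\bigcup_{k\text{ even}}S_{r_k}$ and $\bigcup_{k\text{ odd}}S_{r_k}$ are two disjoint subsets of $V$ whose union is $V$, and each is infinite (being an infinite union of nonempty finite sets, hence infinite). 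This contradicts the assumption that $V$ is amorphous, completing the proof.

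The main subtlety to watch is choicelessness: the definition of the $B_r$ and $S_r$ is canonical (no choices needed, given the fixed basepoint $v_0$), so this is a legitimate $\zf$ construction, and the partition $\{S_r\mid r<\omega, S_r\neq\varnothing\}$ is a genuine definable partition of $V$ into finite pieces. The only place one might worry is asserting that $\bigcup_{k\text{ even}}S_{r_k}$ is infinite: this is fine because it is a union of infinitely many pairwise disjoint nonempty sets, so it surjects onto (indeed, has a subset in bijection with) an infinite set of indices, and a ZF-provable counting argument (finite unions of finite sets are finite) shows it cannot be finite. I expect no serious obstacle here; the argument is essentially the observation that an infinite, connected, locally finite graph is ``countable'' in the strong sense of being a countable union of finite sets, which is incompatible with amorphousness.
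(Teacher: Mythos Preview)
Your proof is correct and follows essentially the same route as the paper's: both fix a basepoint, build the distance-$r$ spheres $S_r$ (your $S_r$, the paper's $N_x^r$), use local finiteness to show each sphere is finite by induction, and then invoke amorphousness to see that the distance function cannot have infinite range---the paper phrases this last step as ``if the range were infinite, partition it into two infinite pieces and pull back,'' which is exactly your even/odd split. The only cosmetic difference is that the paper argues directly (showing the range is finite, hence the component is a finite union of finite sets) rather than by contradiction, but the content is identical.
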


\begin{proof}
Let $X$ be a connected component of $G$. Upon choosing an $x\in X$, we can inductively define sets $N_x^n$, for $n<\omega$, by
\begin{eqnarray*}
N_x^0 & = & \{x\}, \\
N_x^{n+1} & = & \left\{y\in V\setminus\left(\bigcup_{i=0}^n N_x^i\right)\bigg|(\exists z\in N_x^n)(\{y,z\}\in E)\right\}.
\end{eqnarray*}
Notice that the set $N_x^n$ consists of the vertices that are at distance exactly $n$ from $x$, and that $X=\bigcup_{n<\omega}N_x^n$. Thus, we have a function $f:X\longrightarrow\omega$ by letting $f(y)$ be the unique $n$ such that $y\in N_x^n$. Since $X$ is A-finite, the range of $f$ must be finite (otherwise we would be able to partition said range in two infinite co-infinite sets, which would then induce a partition of $X$ in two infinite co-infinite sets via $f$-preimages, contradicting A-finiteness of $X$). Furthermore, since the graph $G$ is locally finite, it is straightforward to prove by induction on $n$ that each of the sets $N_x^n$ is finite. Therefore, $X=\bigcup_{n<\omega}f^{-1}[\{n\}]=\bigcup_{n\in\ran(f)}N_x^n$ is a finite union of finite sets, and is thus a finite set itself.
\end{proof}

There is a very close relation between colourings of pairs of a set and graphs defined on that set. After the following definition, we will explore this in the context of amorphous sets.

\begin{definition}\label{density}
Let $X$ be a set, let $n\in\mathbb N\setminus\{1\}$, and let $c:[X]^n\longrightarrow 2$ be a colouring.
\begin{enumerate}
\item We will say that $c$ is {\bf dense} if there exists an infinite set $Y\subseteq X$ such that for all $\{x_1,\ldots,x_{n-1}\}\in[Y]^{n-1}$ there are $y,z\in X\setminus\{x_1,\ldots,x_{n-1}\}$ such that $c(\{x_1,\ldots,x_{n-1},y\})\neq c(\{x_1,\ldots,x_{n-1},z\})$.
\item If $n=2$ and $i<2$, we will say that $c$ is {\bf $i$-locally finite} if the graph with vertex set $X$ and edge set $\left\{e\in[X]^2\big|c(e)=i\right\}$ is locally finite.
\end{enumerate}
\end{definition}

\begin{lemma}\label{colouringgraph}
Let $X$ be an amorphous set, and let $c:[X]^2\longrightarrow 2$. Then, there exists a unique $i<2$ and a cofinite set $Y\subseteq X$ such that $c\upharpoonright[Y]^2$ is $i$-locally finite.
\end{lemma}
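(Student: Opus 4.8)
The plan is to exploit the dichotomy forced by amorphousness: for a colouring $c\colon[X]^2\to 2$, consider the two graphs $G_0$ (edges of colour $0$) and $G_1$ (edges of colour $1$) on the vertex set $X$. The key observation is that $G_0$ and $G_1$ are complementary graphs, so a vertex cannot have infinitely many neighbours in \emph{both} of them unless some single vertex has infinite degree in both — but even ruling out mixed behaviour requires an amorphousness argument. First I would show that for each vertex $x\in X$, at least one of $\deg_{G_0}(x)$, $\deg_{G_1}(x)$ is finite: indeed, the neighbourhoods $N_0(x)$ and $N_1(x)$ partition $X\setminus\{x\}$, and since $X$ is amorphous one of the two pieces of this partition must be finite. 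Thus $X$ is partitioned into $A_0=\{x : \deg_{G_1}(x)<\omega\}$ (so $x$ has cofinitely many $G_0$-neighbours could fail, but at least $G_1$-degree is finite) and $A_1=\{x:\deg_{G_0}(x)<\omega\}$; more precisely set $A_i=\{x\in X : \deg_{G_i}(x)<\omega\}$, noting $A_0\cup A_1=X$.

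Next, since $X$ is amorphous, one of $A_0$, $A_1$ is cofinite; say $A_i$ is cofinite, and put $Y=A_i$. I claim $c\upharpoonright[Y]^2$ is $i$-locally finite. The subtlety is that $\deg_{G_i}(x)<\omega$ was computed in the ambient graph on $X$, and restricting to $Y$ only decreases degrees, so for every $x\in Y$ its $G_i$-degree within $Y$ is finite as well; hence the colour-$i$ graph on $Y$ is locally finite, which is exactly $i$-local finiteness of $c\upharpoonright[Y]^2$. That $Y$ is cofinite is immediate. For uniqueness of $i$: suppose both $c\upharpoonright[Y]^2$ and $c\upharpoonright[Y']^2$ are $j$-locally finite for $Y,Y'$ cofinite and $j\in\{0,1\}$; then on the cofinite (hence infinite, amorphous) set $Y\cap Y'$ the colour-$j$ graph is locally finite, so by Lemma~\ref{componentsgraph} every connected component of it is finite. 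But an infinite A-finite set cannot be partitioned into infinitely many finite components (that would split it into two infinite pieces), so the colour-$j$ graph on $Y\cap Y'$ has only finitely many components and is therefore finite; thus all but finitely many pairs from $Y\cap Y'$ get colour $1-j$. If $i\neq j$ were both to work via $Y$ ($i$-locally finite) and $Y$ again read as $j$-locally finite, the colour-$i$ graph on $Y$ would be locally finite and, by the same component argument, finite, while the colour-$j=1-i$ graph on $Y$ would be locally finite and finite — forcing $[Y]^2$ itself to be finite, contradicting that $Y$ is infinite. Hence the value of $i$ is forced.

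The main obstacle I anticipate is the passage from ``locally finite graph on an amorphous set'' to ``finite graph'', which is where uniqueness really bites; this is handled by combining Lemma~\ref{componentsgraph} (components are finite) with the observation that an amorphous set admits no partition into infinitely many nonempty pieces, so finitely many finite components means a finite graph. A secondary point of care is making sure the degree computation genuinely uses amorphousness of $X$ rather than of $Y$ — but since cofinite subsets of amorphous sets are amorphous, and subsets of amorphous sets are A-finite, all the partition-into-two-infinite-pieces arguments go through verbatim on whatever cofinite set we land on. Everything else is bookkeeping: defining $A_0,A_1$, invoking A-finiteness to get one of them cofinite, and restricting.
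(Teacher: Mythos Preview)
Your existence argument is correct and essentially identical to the paper's: define, for each $x$, the two neighbourhoods $F_i^x=\{y:c(\{x,y\})=i\}$, use amorphousness of $X$ to see that exactly one of them is finite, set $F_i=\{x:F_i^x\text{ finite}\}$, and use amorphousness again to get one of $F_0,F_1$ cofinite; that cofinite $F_i=Y$ is the required set.

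Your uniqueness argument, however, contains a genuine error. The claim that ``an infinite A-finite set cannot be partitioned into infinitely many finite components (that would split it into two infinite pieces)'' is \emph{false}: amorphous sets can, and often do, admit partitions into infinitely many finite pieces (this is precisely what happens in Lemma~\ref{denselocallyfinite}, and the distinction between amorphous and strongly amorphous turns on exactly this point). Having infinitely many finite pieces does not automatically yield a two-cell partition into infinite sets, since selecting an infinite co-infinite subfamily of the pieces may itself require choice. So the step ``finitely many components, hence the colour-$j$ graph is finite'' does not go through.

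The fix is much simpler than the route you attempted, and you had all the ingredients in hand. If there were cofinite $Y$ and $Y'$ with $c\upharpoonright[Y]^2$ $i$-locally finite and $c\upharpoonright[Y']^2$ $(1-i)$-locally finite, then on the cofinite set $Z=Y\cap Y'$ \emph{both} colour graphs would be locally finite; but they are complementary on $Z$, so any $x\in Z$ would have $|Z\setminus\{x\}|=|N_0(x)\cap Z|+|N_1(x)\cap Z|<\omega$, forcing $Z$ finite --- a contradiction. The paper's own argument is even more direct: having shown $F_{1-i}^x$ is infinite for every $x\in Y=F_i$, any cofinite $Y'$ contains some such $x$ together with infinitely many of its colour-$(1-i)$ neighbours, so $c\upharpoonright[Y']^2$ cannot be $(1-i)$-locally finite. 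Either way, no appeal to Lemma~\ref{componentsgraph} is needed.
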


\begin{proof}
For each $x\in X$, there is a partition $X=\{x\}\cup F_0^x\cup F_1^x$, where
\begin{eqnarray*}
F_0^x & = & \{ y\in X\setminus\{x\}\big|c(x,y)=0\}, \\
F_1^x & = & \{ y\in X\setminus\{x\}\big|c(x,y)=1\}.
\end{eqnarray*}
Since $X$ is amorphous, exactly one of $F_0^x$ and $F_1^x$ is finite. This induces a partition $X=F_0\cup F_1$, where
\begin{eqnarray*}
F_0 & = & \{x\in X\big|F_0^x\text{ is finite}\}, \\
F_1 & = & \{x\in X\big|F_1^x\text{ is finite}\};
\end{eqnarray*}
once again, by amorphousness of $X$, there exists an $i<2$ such that $Y=F_i$ is cofinite. Then the graph $G=(Y,E)$ with $E=\left\{e\in[Y]^2\big|c(e)=i\right\}$ is locally finite, for if $x\in Y$, the set of neighbours of $x$ in $G$ is precisely $Y\cap F_i^x$, which is a finite set by assumption. The uniqueness of $i$ follows from the fact that $F_{1-i}^x$ is infinite for cofinitely many $x\in X$.
\end{proof}

We now start analyzing the other notion introduced in Definition~\ref{density}, namely that of a dense colouring. Notice that, if $X$ is any set and $c:[X]^n\longrightarrow 2$ satisfies that there is an infinite $Y\subseteq X$ with $[Y]^n$ monochromatic, then, in particular, $c\upharpoonright[Y]^n$ fails to be a dense colouring. The following proposition illustrates a situation in which the converse of this statement also holds.

\begin{proposition}\label{densevsgood}
Let $X$ be any infinite set, $n\in\mathbb N$, and $c:[X]^n\longrightarrow 2$ a colouring. Suppose that all infinite subsets of $X$ are \ra{n-1}-infinite, and that there is an infinite $Y\subseteq X$ such that $c\upharpoonright[Y]^n$ fails to be dense. Then there exists an infinite $Z\subseteq X$ such that $[Z]^n$ is monochromatic for $c$.
\end{proposition}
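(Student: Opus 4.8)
The plan is to locate an infinite subset of $Y$ on which the $c$-colour of every $n$-element set is dictated by (any) one of its $(n-1)$-element subsets; such a set is then automatically $c$-monochromatic, so it will serve as the desired $Z$. Producing that set is essentially the whole content, and it will come from two successive uses of the assumption that every infinite subset of $X$ is \ra{n-1}-infinite.

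First I would set up a dichotomy on $(n-1)$-sets. For $s\in[Y]^{n-1}$, call $s$ \emph{settled} if the map $y\mapsto c(s\cup\{y\})$ is constant on $Y\setminus s$ (this is meaningful because $Y\setminus s$ is infinite, hence nonempty), and \emph{unsettled} otherwise. The key observation is that an infinite $W\subseteq Y$ all of whose $(n-1)$-subsets are unsettled would be exactly a witness to $c\upharpoonright[Y]^n$ being dense: for each $\{x_1,\dots,x_{n-1}\}\in[W]^{n-1}$, being unsettled furnishes $y,z\in Y\setminus\{x_1,\dots,x_{n-1}\}$ with $c(\{x_1,\dots,x_{n-1},y\})\neq c(\{x_1,\dots,x_{n-1},z\})$. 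Since we are assuming $c\upharpoonright[Y]^n$ is \emph{not} dense, no such $W$ exists.

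Then I would colour $d\colon[Y]^{n-1}\to 2$ by $d(s)=0$ if $s$ is settled and $d(s)=1$ if $s$ is unsettled. As $Y$ is an infinite subset of $X$, it is \ra{n-1}-infinite, so some infinite $W\subseteq Y$ has $d\upharpoonright[W]^{n-1}$ constant, say of colour $j$. By the previous paragraph $j=1$ is impossible, so $j=0$: every $s\in[W]^{n-1}$ is settled. Now define $e\colon[W]^{n-1}\to 2$ by letting $e(s)$ be the common value of $y\mapsto c(s\cup\{y\})$ on $Y\setminus s$. Since $W$ too is an infinite subset of $X$, applying \ra{n-1}-infiniteness once more yields an infinite $W'\subseteq W$ with $e\upharpoonright[W']^{n-1}$ constant, of colour $i$. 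Finally, for any $t\in[W']^n$ I pick some $y\in t$ and set $s=t\setminus\{y\}\in[W']^{n-1}$; since $s$ is settled with $e(s)=i$ and $y\in W'\setminus s\subseteq Y\setminus s$, I get $c(t)=c(s\cup\{y\})=i$, so $[W']^n$ is monochromatic for $c$ and $Z=W'$ works.

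There is no serious obstacle once the settled/unsettled split is in place; the point that deserves attention — and the reason the hypothesis is phrased for all infinite subsets of $X$ and not merely for $X$ — is that \ra{n-1}-infiniteness is invoked twice, first for $Y$ and then for the already shrunk set $W$, the second colouring $e$ only being available after the first step guarantees that every $(n-1)$-subset of $W$ is settled. (When $n=1$ the statement degenerates: $[Y]^{0}=\{\varnothing\}$, \ra{0}-infiniteness is vacuous, ``not dense'' just says $c$ is constant on $Y$, and one may take $Z=Y$; the uniform argument above still goes through.)
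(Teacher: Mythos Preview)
Your proof is correct and follows essentially the same route as the paper's own argument: the paper's colouring $d$ is your settled/unsettled dichotomy, its $Y'$ is your $W$, its $e$ is your $e$, and its $Z$ is your $W'$. The only differences are cosmetic --- you name the dichotomy and spell out the final verification that $[W']^n$ is $c$-monochromatic, whereas the paper leaves that last step to the reader.
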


\begin{proof}
That $c\upharpoonright[Y]^n$ fails to be dense means that for every infinite $Y'\subseteq Y$, there is $\{x_1,\ldots,x_{n-1}\}\in[Y']^{n-1}$ and an $i<2$ such that for every $x\in Y\setminus\{x_1,\ldots,x_{n-1}\}$, $c(\{x_1,\ldots,x_{n-1},x\})=i$. So, if we let $d:[Y]^{n-1}\longrightarrow 2$ be defined by $d(\{x_1,\ldots,x_{n-1}\})=0$ iff $(\exists i<2)(\forall x\in Y\setminus\{x_1,\ldots,x_{n-1}\})(c(\{x_1,\ldots,x_{n-1},x\})=i)$, and use the fact that $Y$ is \ra{n-1}-infinite to get an infinite $Y'\subseteq Y$ such that $[Y']^{n-1}$ is monochromatic for $d$, then this set must in fact be monochromatic in colour $0$. Hence, we can define $e:[Y']^{n-1}\longrightarrow 2$ by letting $e(\{x_1,\ldots,x_{n-1}\})$ be the unique $i<2$ such that $(\forall x\in Y\setminus\{x_1,\ldots,x_{n-1}\})(c(\{x_1,\ldots,x_{n-1},x\})=i)$. Using now the fact that $Y'$ is \ra{n-1}-infinite, obtain an infinite $Z\subseteq Y'$ such that $[Z]^{n-1}$ is monochromatic for $e$. It is then readily checked that $[Z]^n$ is in fact monochromatic for $c$.
\end{proof}

Since every infinite set is \ra{1}-infinite, the previous proposition applies to every infinite set when $n=2$. The following corollary will be of crucial importance in the remainder of this section.

\begin{corollary}\label{denseiffbad}
Let $n\in\mathbb N$, let $X$ be an amorphous, \ra{n-1}-infinite set, and let $c:[X]^n\longrightarrow 2$ be a colouring. Then, there exists an infinite $Y\subseteq X$ with $[Y]^n$ monochromatic if and only if there exists an infinite $Y\subseteq X$ such that the colouring $c\upharpoonright[Y]^n$ fails to be dense. In particular, if $X$ is amorphous and \ra{2}-finite, then a colouring $c:[X]^2\longrightarrow 2$ witnesses this fact if and only if for every infinite $Y\subseteq X$, the colouring $c\upharpoonright[Y]^2$ is dense.
\end{corollary}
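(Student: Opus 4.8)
The plan is to read this off from Proposition~\ref{densevsgood} together with the observation made just before it, with essentially no extra work. For the first biconditional I would argue the two implications separately. The implication ``$[Y]^n$ monochromatic $\Rightarrow$ $c\upharpoonright[Y]^n$ not dense'' is exactly that observation: if $[Y]^n$ is monochromatic and $\{x_1,\dots,x_{n-1}\}\in[Y]^{n-1}$, then every set $\{x_1,\dots,x_{n-1},x\}$ with $x\in Y\setminus\{x_1,\dots,x_{n-1}\}$ lies in $[Y]^n$ and so receives the same colour, so no pair $y,z$ witnessing density can exist; this half uses neither amorphousness nor \ra{n-1}-infiniteness.

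For the converse I would first record the small auxiliary fact that, since $X$ is amorphous, \emph{every} infinite subset of $X$ is \ra{n-1}-infinite: an infinite $W\subseteq X$ is cofinite in $X$ (as $X$ is A-finite), and given any $d:[W]^{n-1}\longrightarrow 2$ one may extend it to $\tilde d:[X]^{n-1}\longrightarrow 2$ by colouring the $(n-1)$-subsets of $X$ that meet $X\setminus W$ arbitrarily; the \ra{n-1}-infiniteness of $X$ yields an infinite $\tilde d$-monochromatic $Z\subseteq X$, and then $Z\cap W$ is cofinite in $Z$, hence infinite, with $[Z\cap W]^{n-1}$ monochromatic for $d$. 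With this in hand all hypotheses of Proposition~\ref{densevsgood} are met, so from an infinite $Y\subseteq X$ with $c\upharpoonright[Y]^n$ not dense the proposition produces an infinite $Z\subseteq X$ with $[Z]^n$ monochromatic for $c$, which is precisely the remaining implication.

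The ``in particular'' clause is then pure bookkeeping. An amorphous set is infinite, hence \ra{1}-infinite, so the first part applies with $n=2$. By definition a colouring $c:[X]^2\longrightarrow 2$ witnesses that $X$ is \ra{2}-finite exactly when $c\upharpoonright[Y]^2$ is non-constant for every infinite $Y\subseteq X$, i.e.\ exactly when no infinite $Y\subseteq X$ has $[Y]^2$ monochromatic; by the biconditional just established this is equivalent to the non-existence of an infinite $Y\subseteq X$ with $c\upharpoonright[Y]^2$ non-dense, that is, to $c\upharpoonright[Y]^2$ being dense for every infinite $Y\subseteq X$. I do not expect any genuine obstacle here: all of the substance already sits inside Proposition~\ref{densevsgood}, and the only points that require a moment's care are the auxiliary fact about cofinite subsets of an amorphous set and the unwinding of what it means for a colouring to witness \ra{2}-finiteness.
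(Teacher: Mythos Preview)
Your proposal is correct and follows essentially the same approach as the paper: the forward direction is the observation preceding Proposition~\ref{densevsgood}, the backward direction applies Proposition~\ref{densevsgood} after checking that every infinite subset of $X$ is \ra{n-1}-infinite, and the ``in particular'' clause uses that every infinite set is \ra{1}-infinite. The only difference is that where the paper simply notes that \ra{n-1}-infiniteness is ``invariant under finite changes'' (so cofinite, hence all infinite, subsets of the amorphous set $X$ inherit it), you spell this out explicitly via the extension-and-intersection argument; this is a correct elaboration of the same point.
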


\begin{proof}
The forward direction holds for any set; for the backward direction, note that the property of being \ra{n-1}-infinite is invariant under finite changes, and so if $X$ is amorphous and \ra{n-1}-infinite, then so are all of its infinite subsets. Thus, we can apply Proposition~\ref{densevsgood}.

The second statement follows immediately from the fact that every infinite set is \ra{1}-infinite.
\end{proof}

\begin{lemma}\label{denselocallyfinite}
Let $X$ be an amorphous set, and let $c:[X]^2\longrightarrow 2$ be a dense colouring which is $i$-locally finite for some $i<2$. Then, there is a partition of $X$ into finite pieces, no infinite subfamily of which admits a choice function. In particular, by Proposition~\ref{russellsetrfinite}, $X$ is \r-finite for all $n\in\mathbb N\setminus\{1\}$.
\end{lemma}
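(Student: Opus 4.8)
The plan is to build the required partition from the connected components of the $i$-monochromatic graph. Concretely, let $E_i=\{e\in[X]^2\mid c(e)=i\}$ and let $G_i=(X,E_i)$; since $c$ is $i$-locally finite, $G_i$ is a locally finite graph, and its vertex set $X$ is amorphous, so Lemma~\ref{componentsgraph} applies and tells us that every connected component of $G_i$ is finite. Let $\mathscr{F}$ be the set of connected components of $G_i$. This is a partition of $X$ into finite pieces, and it is infinite, because $X$ is infinite while a finite union of finite sets is finite. So it remains only to show that no infinite subfamily of $\mathscr{F}$ admits a choice function; once this is done, the ``in particular'' clause is immediate from the contrapositive of Proposition~\ref{russellsetrfinite}: if $X$ were \r-infinite for some $n\in\mathbb N\setminus\{1\}$, then the infinite partition $\mathscr{F}$ of $X$ into finite pieces would have an infinite subfamily with a choice function, contradicting what we will have proved; hence $X$ is \r-finite for all such $n$.

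The engine of the argument is a simple consequence of density in the case $n=2$. Fix the infinite (hence, by amorphousness, cofinite) set $Y\subseteq X$ witnessing that $c$ is dense. Unpacking Definition~\ref{density} with $n=2$, for every $x\in Y$ there are $y,z\in X\setminus\{x\}$ with $c(\{x,y\})\neq c(\{x,z\})$, so exactly one of these two edges has colour $i$; in particular $x$ has a neighbour in $G_i$, and therefore the $G_i$-component containing $x$ has at least two elements. Since $X\setminus Y$ is finite and the members of $\mathscr{F}$ are nonempty and pairwise disjoint, only finitely many members of $\mathscr{F}$ are disjoint from $Y$; thus all but finitely many members of $\mathscr{F}$ meet $Y$, and hence all but finitely many members of $\mathscr{F}$ have size at least $2$.

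Now suppose, toward a contradiction, that $\mathscr{G}\subseteq\mathscr{F}$ is infinite and admits a choice function, and let $S\subseteq X$ be the resulting selector, consisting of exactly one element of each member of $\mathscr{G}$. On one hand, $S$ is infinite: its cardinality equals that of $\mathscr{G}$, since the members of $\mathscr{G}$ are nonempty and pairwise disjoint. On the other hand, by the previous paragraph infinitely many members $C\in\mathscr{G}$ satisfy $|C|\geq 2$, and for each such $C$ the set $C\setminus S$ is nonempty (because $|C\cap S|=1$); as these sets $C\setminus S$ are pairwise disjoint, $X\setminus S$ is infinite. Thus $S$ and $X\setminus S$ partition $X$ into two infinite pieces, contradicting the amorphousness of $X$. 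This completes the proof.

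I do not expect any serious obstacle here; the only points that need care are the two pieces of bookkeeping in the third-to-last step — that cofiniteness of $Y$ together with disjointness and nonemptiness of the pieces forces all but finitely many members of $\mathscr{F}$ to meet $Y$, and that the selector $S$ is genuinely \emph{infinite}, which is exactly what lets us invoke A-finiteness of $X$ — together with the check that density in the $n=2$ case really does yield, for each $x\in Y$, an edge of colour $i$ at $x$ (it yields edges of both colours, so in particular one of colour $i$), since this is what makes the ``component of size $\geq 2$'' claim go through.
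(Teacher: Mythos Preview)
Your proof is correct and follows essentially the same route as the paper's: take the connected components of the $i$-monochromatic graph as the partition, use Lemma~\ref{componentsgraph} for finiteness of the pieces, use density to see that all but finitely many components have size at least $2$, and then argue that a selector for an infinite subfamily would split the amorphous set $X$ into two infinite pieces. The paper's argument is terser (it asserts the ``all but finitely many components have $\geq 2$ elements'' step without unpacking the role of the witnessing set $Y$), whereas you spell out why cofiniteness of $Y$ in $X$ forces this; but the structure is identical.
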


\begin{proof}
Let $\mathcal F$ be the family of connected components of the (locally finite) graph with vertex set $X$ and edge set $\{e\in[X]^2\big|c(e)=i\}$. By Lemma~\ref{componentsgraph}, each element  of $\mathcal F$ is finite. Furthermore, since $c$ is a dense colouring, all but finitely many elements of the family $\mathcal F$ contain at least two elements. Hence, if $\mathcal F'$ was an infinite subfamily of $\mathcal F$ with a selector $Y$, then $Y$ would be infinite, and so would $X\setminus Y$ (since for infinitely many $F\in\mathcal F'$, there would be at least one element in $F\setminus Y$), contradicting that $X$ is amorphous. Therefore, no infinite subfamily of $\mathcal F$ can possibly admit a choice function, and we are done.
\end{proof}

We are finally able to prove the main theorem for this section, which shows that the distinct notions of \r-finiteness, as $n$ varies, cannot be separated within the class of amorphous sets. The proof of this theorem shows that, if $X$ is amorphous and \r-finite for some $n\in\mathbb N\setminus\{1\}$, then $X$ carries a partition into finite sets, no infinite subfamily of which admits a choice function. This last statement constitutes the converse of Proposition~\ref{russellsetrfinite} for amorphous sets, as announced earlier.

\begin{theorem}\label{amorphousrfinite}
Let $X$ be an amorphous set. Then either $X$ is \r-finite for all $n\in\mathbb N\setminus\{1\}$ or $X$ is \r-infinite for all $n\in\mathbb N$.
\end{theorem}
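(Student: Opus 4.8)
The plan is to prove the contrapositive: if $X$ is amorphous and $\mathrm{R}^n$-finite for \emph{some} $n\in\mathbb N\setminus\{1\}$, then $X$ is $\mathrm{R}^m$-finite for \emph{all} $m\in\mathbb N\setminus\{1\}$ (the case $m=1$ being impossible anyway, since every infinite set is $\mathrm{R}^1$-infinite). The machinery assembled in this section reduces this to a single clean statement: I claim it suffices to show that an amorphous $\mathrm{R}^n$-finite set $X$ must carry a partition into finite pieces, no infinite subfamily of which admits a choice function. Indeed, once this is established, Proposition~\ref{russellsetrfinite} immediately gives that $X$ is $\mathrm{R}^m$-finite for every $m\geq 2$, which is exactly what we want.

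So the real content is: amorphous $+$ $\mathrm{R}^n$-finite $\implies$ such a partition exists. First I would dispose of the induction anchor $n=2$, and then bootstrap. For $n=2$: let $c:[X]^2\longrightarrow 2$ be a colouring witnessing $\mathrm{R}^2$-finiteness, so no infinite $Y\subseteq X$ has $[Y]^2$ monochromatic. By Lemma~\ref{colouringgraph} there is a unique $i<2$ and a cofinite $Y_0\subseteq X$ with $c\upharpoonright[Y_0]^2$ being $i$-locally finite. Since $Y_0$ is cofinite in an amorphous set, it is itself amorphous and $\mathrm{R}^2$-finite, so I can replace $X$ by $Y_0$ and assume $c$ itself is $i$-locally finite. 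Now invoke Corollary~\ref{denseiffbad} (applicable since $X$ is amorphous and $\mathrm{R}^1$-infinite): because no infinite $Y$ makes $c\upharpoonright[Y]^2$ non-dense, the colouring $c\upharpoonright[Y]^2$ is dense for every infinite $Y$ — in particular $c$ is a dense colouring. Thus $c$ is a dense, $i$-locally finite colouring on the amorphous set $X$, and Lemma~\ref{denselocallyfinite} hands us precisely the desired partition.

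For the inductive step, suppose $X$ is amorphous and $\mathrm{R}^n$-finite for some $n\geq 3$; I want to conclude $X$ is $\mathrm{R}^{n-1}$-finite (and then iterate down to $n=2$, where the previous paragraph applies, after which Proposition~\ref{russellsetrfinite} finishes everything). Suppose toward a contradiction that $X$ is $\mathrm{R}^{n-1}$-infinite. Since $\mathrm{R}^{n-1}$-infinitude is invariant under finite changes, \emph{every} infinite subset of the amorphous set $X$ is $\mathrm{R}^{n-1}$-infinite as well. Now take any colouring $c:[X]^n\longrightarrow 2$ witnessing $\mathrm{R}^n$-finiteness: no infinite $Y\subseteq X$ has $[Y]^n$ monochromatic. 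By Corollary~\ref{denseiffbad} (with this $n$, using that $X$ is amorphous and $\mathrm{R}^{n-1}$-infinite), it follows that for \emph{every} infinite $Y\subseteq X$ the colouring $c\upharpoonright[Y]^n$ is dense. But density of $c$ on all of $X$ exhibits, for the witnessing infinite set $Y$ in Definition~\ref{density}(1), arbitrarily many $(n-1)$-subsets each of which can be extended by two elements of different colours — and I expect this to contradict $\mathrm{R}^{n-1}$-infinitude of $X$ itself, because one can read off from such a dense colouring a derived $2$-colouring of $[X]^{n-1}$ (analogous to the auxiliary colouring $d$ in the proof of Proposition~\ref{densevsgood}) that has no infinite monochromatic set. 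The precise extraction of this contradiction is the step I expect to be the main obstacle: I will need to massage the dense colouring $c$ on $[X]^n$ into a colouring of $[X]^{n-1}$ that captures the obstruction, keeping track of the amorphousness (to guarantee cofinite behaviour of the relevant auxiliary sets) and ensuring that an infinite monochromatic set for the derived colouring would yield an infinite monochromatic set for $c$ — contradicting $\mathrm{R}^n$-finiteness — while in the other colour it would contradict density. Once that contradiction is in hand, $X$ is $\mathrm{R}^{n-1}$-finite, the downward induction closes at $n=2$, and the $n=2$ analysis above produces the finite-to-one partition with no choosable infinite subfamily; Proposition~\ref{russellsetrfinite} then upgrades this to $\mathrm{R}^m$-finiteness for all $m\in\mathbb N\setminus\{1\}$, completing the dichotomy.
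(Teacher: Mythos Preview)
Your treatment of the anchor case $n=2$ is correct and essentially matches the paper's Case~1. The gap is in your inductive step for $n\geq 3$: you want to show that if $X$ is amorphous, $\mathrm{R}^{n-1}$-infinite, and $\mathrm{R}^n$-finite, then some derived colouring of $[X]^{n-1}$ has no infinite monochromatic set---but you explicitly flag this as ``the main obstacle'' and do not carry it out. The plan you sketch (a colouring of $(n-1)$-tuples such that one colour would yield a $c$-monochromatic set and the other would contradict density) does not obviously work. The natural candidate, $d(F)=i_F$ where $i_F$ is the cofinite value of $x\mapsto c(F\cup\{x\})$, \emph{does} admit an infinite monochromatic set $Y$ by the very hypothesis of $\mathrm{R}^{n-1}$-infinitude; but that $Y$ need not be $c$-monochromatic (the finite exceptional set of $x$'s depends on $F$), nor does it contradict density (one colour being cofinite is perfectly compatible with both colours occurring). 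So neither horn of your intended dilemma closes.

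The paper does not descend one dimension at a time but jumps directly to dimension~$2$. Assuming $X$ is $\mathrm{R}^2$-infinite and taking $n\geq 3$ minimal with $X$ $\mathrm{R}^n$-finite, one first uses the colouring $d(F)=i_F$ together with $\mathrm{R}^{n-1}$-infinitude to pass to a cofinite $Y$ on which every $(n-1)$-tuple has the same cofinite $c$-colour $i$; density of $c\upharpoonright[Y]^n$ then follows from Corollary~\ref{denseiffbad}. Now fix an arbitrary $(n{-}2)$-element set $\{x_1,\ldots,x_{n-2}\}\subseteq Y$ and define a $2$-colouring $e(\{x,y\})=c(\{x_1,\ldots,x_{n-2},x,y\})$ on pairs from $Y\setminus\{x_1,\ldots,x_{n-2}\}$. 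The point is that $e$ is simultaneously $(1-i)$-locally finite (from the choice of $Y$) and dense (from density of $c$), so Lemma~\ref{denselocallyfinite} applies and produces the bad partition---hence $X$ is $\mathrm{R}^m$-finite for all $m\geq 2$, in particular $\mathrm{R}^2$-finite, contradicting the case hypothesis. The idea you are missing is precisely this: freeze $n-2$ points at once to collapse the problem to the $2$-dimensional machinery (Lemmas~\ref{colouringgraph}, \ref{denselocallyfinite}) already in hand, rather than trying to step down by one.
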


\begin{proof}
Let $X$ be an amorphous set. The proof breaks into two cases:
\begin{description}
\item[Case 1] If $X$ is \ra{2}-finite, then by Corollary~\ref{denseiffbad}, there is a colouring $c:[X]^2\longrightarrow 2$ such that for every infinite $Y\subseteq X$, $c\upharpoonright[Y]^2$ is dense. By Lemma~\ref{colouringgraph}, we can pick an $i<2$ and a cofinite $Y\subseteq X$ such that $c\upharpoonright[Y]^2$ is $i$-locally finite (and still dense, by assumption). Hence, $Y$ (and therefore also $X$) is \r-finite for all $n\in\mathbb N\setminus\{1\}$, by Lemma~\ref{denselocallyfinite}.

\item[Case 2] If $X$ is \ra{2}-infinite, we will proceed to argue that $X$ must in fact be \r-infinite for all $n\in\mathbb N$. Suppose otherwise, let $n$ be the least number such that $X$ is \r-finite (note that we must have $n\geq 3$), and let $c:[X]^n\longrightarrow 2$ be a witness for this. 
Now, for each $F\in[X]^{n-1}$, there is an $i_F<2$ such that all but finitely many $x\in X\setminus F$ satisfy $c(F\cup\{x\})=i_F$. We let $d:[X]^{n-1}\longrightarrow 2$ be given by $d(F)=i_F$, and use the fact that $X$ is \ra{n-1}-infinite to get a cofinite $Y\subseteq X$ and an $i<2$ such that $[Y]^{n-1}$ is monochromatic for $d$ in colour $i$. This means that for all $\{x_1,\ldots,x_{n-1}\}\in[Y]^{n-1}$, there are only finitely many $x\in Y\setminus\{x_1,\ldots,x_{n-1}\}$ with $c(\{x_1,\ldots,x_{n-1},x\})=1-i$. Now, since $c$ witnesses that $X$ is \r-finite, by Corollary~\ref{denseiffbad}, the colouring $c\upharpoonright[Y]^n$ is dense, meaning that there is a cofinite $Z\subseteq Y$ such that for all $\{x_1,\ldots,x_{n-1}\}\in[Z]^{n-1}$ there are $x,y\in Y\setminus\{x_1,\ldots,x_{n-1}\}$ such that $c(\{x_1,\ldots,x_{n-1},x\})\neq c(\{x_1,\ldots,x_{n-1},y\})$.
Pick an arbitrary $n-2$-sized set $\{x_1,\ldots,x_{n-2}\}\in[Z]^{n-2}$, and define the colouring $e:[Y\setminus\{x_1,\ldots,x_{n-2}\}]^2\longrightarrow 2$ by $e(\{x,y\})=c(\{x_1,\ldots,x_{n-2},x,y\})$. Then we have, on the one hand, that for each $x\in Y\setminus\{x_1,\ldots,x_{n-2}\}$ there are only finitely many $y\in Y\setminus\{x_1,\ldots,x_{n-2},x\}$ such that $e(\{x,y\})=c(\{x_1,\ldots,x_{n-2},x,y\})=1-i$, and so $e$ is $1-i$-locally finite; and on the other hand, for each $x$ that belongs to the infinite subset $Z\setminus\{x_1,\ldots,x_{n-2}\}$ of $Y\setminus\{x_1,\ldots,x_{n-2}\}$, there are $y,z\in Y\setminus\{x_1,\ldots,x_{n-2},x\}$ such that $e(\{x,y\})=c(\{x_1,\ldots,x_{n-2},x,y\})\neq c(\{x_1,\ldots,x_{n-2},x,z\})=e(\{x,y\})$, and so $e$ is a dense colouring. Hence, by Lemma~\ref{denselocallyfinite}, the set $Y\setminus\{x_1,\ldots,x_{n-2}\}$ (and hence also $X$) must be \r-finite for all $n\in\mathbb N\setminus\{1\}$, a contradiction.
\end{description}
\end{proof}

If one wants to delve deep into the classification of amorphous sets, there is a neat relation between this and the different notions of \r-finiteness (as $n$ varies). Recall that Truss~\cite{truss-structureamorphous} defines a {\em strongly amorphous} set as an amorphous set which can only be partitioned into finite pieces if all but finitely many of those pieces are singletons.

\begin{theorem}
If $X$ is a strongly amorphous set, then $X$ is \r-infinite for some $n\in\mathbb N\setminus\{1\}$ (equivalently, for all $n\in\mathbb N\setminus\{1\}$).
\end{theorem}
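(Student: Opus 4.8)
The plan is to use the dichotomy furnished by Theorem~\ref{amorphousrfinite} together with the by-product of its proof, and then to exploit the extra strength of ``strongly amorphous'' to exclude the $\mathrm{R}^n$-finite horn of that dichotomy. Since a strongly amorphous set is in particular amorphous, Theorem~\ref{amorphousrfinite} tells us that $X$ is either $\mathrm{R}^n$-finite for all $n\in\mathbb N\setminus\{1\}$, or $\mathrm{R}^n$-infinite for all $n\in\mathbb N$; so it is enough to rule out the first alternative, and for this it suffices (again by the dichotomy) to show that $X$ is not $\mathrm{R}^2$-finite. Recall moreover that, as pointed out in the discussion preceding Theorem~\ref{amorphousrfinite}, its proof shows that an amorphous set which is $\mathrm{R}^n$-finite for some $n\geq 2$ must carry a partition into finite pieces, no infinite subfamily of which admits a choice function.

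First I would assume, toward a contradiction, that $X$ is $\mathrm{R}^2$-finite (equivalently, $\mathrm{R}^n$-finite for some $n\geq 2$), and fix, by the above, a partition $\mathcal F$ of $X$ into finite pieces such that no infinite subfamily of $\mathcal F$ has a choice function. Because $X$ is strongly amorphous, all but finitely many members of $\mathcal F$ are singletons; let $\mathcal F_0\subseteq\mathcal F$ consist of the singleton members. The one point to check is that $\mathcal F_0$ is infinite: $\mathcal F\setminus\mathcal F_0$ is a finite family of finite sets, so $\bigcup(\mathcal F\setminus\mathcal F_0)$ is finite, whence $\bigcup\mathcal F_0=X\setminus\bigcup(\mathcal F\setminus\mathcal F_0)$ is infinite and therefore $\mathcal F_0$ is infinite. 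But $\mathcal F_0$ is then an infinite subfamily of $\mathcal F$ admitting the obvious choice function (each singleton is sent to its unique element), a contradiction. Hence $X$ is not $\mathrm{R}^2$-finite, and by Theorem~\ref{amorphousrfinite} it is $\mathrm{R}^n$-infinite for all $n\in\mathbb N$.

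If one prefers to avoid quoting a consequence of the proof of Theorem~\ref{amorphousrfinite}, the same partition can be built on the spot: assuming $X$ is $\mathrm{R}^2$-finite, Corollary~\ref{denseiffbad} provides a colouring $c\colon[X]^2\to 2$ whose restriction to $[Y]^2$ is dense for every infinite $Y\subseteq X$; Lemma~\ref{colouringgraph} then yields $i<2$ and a cofinite $Y\subseteq X$ (still amorphous, and with $c\upharpoonright[Y]^2$ still dense) such that $c\upharpoonright[Y]^2$ is $i$-locally finite; and exactly as in the proof of Lemma~\ref{denselocallyfinite} the connected components of the graph on $Y$ with edge set $\{e\in[Y]^2\mid c(e)=i\}$ partition $Y$ into finite pieces, all but finitely many of which have at least two elements. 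Adjoining the finite set $X\setminus Y$ as one extra block, one gets a partition of $X$ into finite pieces with infinitely many non-singleton blocks, contradicting strong amorphousness of $X$. I expect no genuine difficulty in any of this; the only points requiring a moment's care are the bookkeeping that deleting finitely many finite pieces from a partition of an infinite set leaves infinitely many singleton pieces, and, in the alternative argument, the routine verification that a cofinite subset of an amorphous set is amorphous and inherits the density of $c$.
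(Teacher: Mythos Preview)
Your argument is correct in both variants. The paper, however, takes a slightly more direct route that avoids both Theorem~\ref{amorphousrfinite} and the density notion entirely. Given an arbitrary colouring $c:[X]^2\to 2$, the paper applies Lemma~\ref{colouringgraph} to obtain an $i<2$ and a cofinite $Z\subseteq X$ on which $c$ is $i$-locally finite, and Lemma~\ref{componentsgraph} to conclude that the connected components of the resulting graph are finite. Adjoining $X\setminus Z$ as one extra block gives a partition of $X$ into finite pieces; strong amorphousness then forces all but finitely many components to be singletons, and the union $Y$ of the singleton components is an infinite set with $[Y]^2$ monochromatic in colour $1-i$ (since two vertices in distinct components share no $i$-edge). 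This is a direct proof that $X$ is $\mathrm R^2$-infinite, with no contradiction hypothesis and no appeal to Corollary~\ref{denseiffbad}.

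Your first variant trades this directness for economy: it quotes the by-product of Theorem~\ref{amorphousrfinite} as a black box and derives a contradiction in two lines, which is perfectly legitimate given that the paper explicitly records that by-product. Your second variant is essentially the paper's argument run contrapositively, with the extra step through density used to guarantee non-singleton components; the paper instead lets strong amorphousness do all the work and reads off the monochromatic set explicitly. Either way the content is the same; the paper's version is just a bit leaner.
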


\begin{proof}
Suppose that $X$ is strongly amorphous, and let $c:[X]^2\longrightarrow 2$ be any colouring. Using Lemma~\ref{colouringgraph}, find a colour $i<2$ and a cofinite subset $Z\subseteq X$ such that the graph $(Z,E)$, with $E=\{e\in[Z]^2\big|c(e)=i\}$, is locally finite. Then if we let $\mathcal F$ be the partition of $Y$ into the connected components of $G$, each of the pieces of this partition is finite by Lemma~\ref{componentsgraph}. Thus, $\mathcal F\cup\{X\setminus Z\}$ is a partition of $X$ into finite pieces; since $X$ is strongly amorphous, all but finitely many of the elements of this partition are singletons. Let $\mathcal F'=\{F\in\mathcal F\big||F|=1\}$ and let $Y=\bigcup_{F\in\mathcal F'}F$. Then $Y\subseteq X$ is infinite and any two distinct $x,y\in Y$ lie in distinct connected components of $G$, in other words, $c(\{x,y\})=1-i$. Hence, $[Y]^2$ is monochromatic, and we are done.
\end{proof}

\section{Flavours of Hindman finiteness}

In the introduction we mentioned two equivalent statements, each of which can be called ``Hindman's theorem''. The first one is that, for every colouring $c:\mathbb N\longrightarrow 2$ of the natural numbers $\mathbb N$ with two colours, there exists an infinite set $X\subseteq\mathbb N$ such that the set $\fs(X)$ is monochromatic; and the second is that for every colouring $c:[\omega]^{<\omega}\longrightarrow 2$ of all finite subsets of $\omega$ with two colours, there exists an infinite pairwise disjoint family $X\subseteq[\omega]^{<\omega}$ such that the set $\fu(X)$ is monochromatic. Here, $\fs(X)=\left\{\sum_{x\in F}x\bigg|F\in[X]^{<\omega}\setminus\{\varnothing\}\right\}$ is the set of all sums of finitely many elements from $X$, whereas $\fu(X)=\left\{\bigcup_{x\in F}x\bigg|F\in[X]^{<\omega}\setminus\{\varnothing\}\right\}$ is the set of all unions of finitely many elements of $X$.

Each of these two statements has generalizations that go beyond the scope of Hindman's original theorem. The first statement (the one in terms of finite sums) generalizes, by using the tools of algebra in the \v{C}ech--Stone compactification~\cite{hindman-strauss}, to the statement that for every infinite abelian group $G$, and for every $c:G\longrightarrow 2$, there exists an infinite $X\subseteq G$ such that the set $\fs(X)$ (defined exactly as above, but with respect to the group operation of $G$) is monochromatic\footnote{The group $G$ need not be abelian, but the non-commutative case requires a \textit{sequence}, rather than a set, of elements of $G$, to ensure that all elements are multiplied in the same order when computing finite products.}. Meanwhile, the second statement (the one in terms of finite unions) readily generalizes, under $\ac$, to the statement that for every infinite set $X$ and for every $c:[X]^{<\omega}\longrightarrow 2$ there exists an infinite set $Y\subseteq[X]^{<\omega}$, whose elements are pairwise disjoint, such that the set $\fu(Y)$ is monochromatic. The second generalization immediately lends itself to formulating yet another definition of finiteness, whereas the first one not so much, since without $\ac$ there could be sets that cannot be endowed with any group operation. It is possible, however, to focus on certain specific group structures, most notably that of the {\it Boolean group} on a set: given a set $X$, its finite powerset $[X]^{<\omega}$ forms a group when equipped with the symmetric difference $\bigtriangleup$ as group operation; this group is called Boolean because each of its non-identity elements has order $2$. Note also that, if $Y\subseteq[X]^{<\omega}$ is a family of pairwise disjoint elements, then we actually have that $\fs(Y)=\fu(Y)$, where the left-hand side is interpreted as the set of finite sums computed in the Boolean group $[X]^{<\omega}$. It turns out that, for many applications of Hindman's theorem, considering only Boolean groups is general enough (for example,~\cite[Corollary 3.2]{strongly-union-trivialsums} implies that, if one is interested in strongly summable ultrafilters on abelian groups, without loss of generality one can assume that the relevant group is Boolean).

Hence, one immediately sees at least two possible ways of defining a finiteness class inspired by Hindman's theorem: in order for a set $X$ to be large, one could require that for every colouring $c:[X]^{<\omega}\longrightarrow 2$ there exists an infinite $Y\subseteq[X]^{<\omega}$ such that $\fs(Y)$ (computed in the Boolean sense) is monochromatic, or one could require the same condition but with the additional requirement that the family $Y$ is pairwise disjoint. But there is, in fact, much more variability here, for one could define (conceivably) weaker versions of largeness by, instead of asking that the full set $\fs(Y)$ be monochromatic, requiring only that the more restricted set $\fs_{\leq k}(Y)=\{\sum_{y\in F}y\big|F\subseteq Y\text{ and }0<|F|\leq k\}$ be monochromatic. Considerations about Hindman's theorem for a restricted number of summands have been pondered in the literature: it is a very old question of Hindman, Leader and Strauss~\cite[Question 12]{hindman-leader-strauss-partition-regularity} whether a proof of Hindman's theorem for at most two summands already implies the full Hindman's theorem for any finite number of summands. Although this question was asked in a vague sense, there are ways of making this question precise, for example, in terms of computability theory~\cite[Question 11]{blass-questions-hindman}. From the perspective of computability theory, various authors have made progress on the question whether Hindman's theorem for a bounded number of summands is as strong as the full version of Hindman's theorem~\cite{efectiveness-hindman,carlucci-et-al-bounds-on-hindman} (see also~\cite{6-authors-reverse-math-hindman} for results on the strength of Hindman's theorem when restricted to {\em exactly} (rather than {\em at most}) two summands). For another example of these kinds of considerations, Carlucci~\cite{carlucci-hindman} has some results about colouring uncountable groups (with finitely many colours) and obtaining monochromatic uncountable sets (of some prescribed cardinality) of the form $\fs_a(X)=\{\sum_{x\in F}x\big|F\in[X]^k\text{ for some }k\in a\}$, for a variety of different kinds of finite sets $a$ (e.g. if $a$ is an arithmetic progression, or even a finite set of the form $\fs(Z)$).

Here we explore a different approach to the issue of making this question precise, from the perspective of choiceless set theory; the answer in this case is rather surprising. The discussion above gives rise to the following infinite family of definitions.

\begin{definition}
Let $X$ be a set, let $n<\omega$, and let $Y\subseteq[X]^{<\omega}$ be a collection of elements of the Boolean group based on $X$.
\begin{enumerate}
\item If the set $Y$ consists of pairwise disjoint elements, we will occasionally write $\fu_{\leq n}(Y)$ instead of $\fs_{\leq n}(Y)$, given that for pairwise disjoint elements, the sum is the same as the union.
\item The set $X$ will be said to be \hb{n}-finite if there exists a colouring $c:[X]^{<\omega}\longrightarrow 2$ such that for every infinite $Y\subseteq[X]^{<\omega}$, the set $\fs_{\leq n}(Y)$ is not monochromatic.
\item The set $X$ will be said to be \hb{}-finite if there exists a colouring $c:[X]^{<\omega}\longrightarrow 2$ such that for every infinite $Y\subseteq[X]^{<\omega}$, the set $\fs(Y)$ is not monochromatic.
\item The set $X$ will be said to be \hd{n}-finite if there exists a colouring $c:[X]^{<\omega}\longrightarrow 2$ such that for every infinite $Y\subseteq[X]^{<\omega}$ consisting of pairwise disjoint elements, the set $\fu_{\leq n}(Y)$ is not monochromatic.
\item The set $X$ will be said to be \hd{}-finite if there exists a colouring $c:[X]^{<\omega}\longrightarrow 2$ such that for every infinite $Y\subseteq[X]^{<\omega}$ consisting of pairwise disjoint elements, the set $\fu(Y)$ is not monochromatic.
\end{enumerate}
\end{definition}

Hence, a set $X$ is \hd{n}-infinite if it satisfies the pairwise disjoint version of Hindman's theorem for up to $n$ summands, and \hb{n}-infinite if it satisfies the Boolean group version of this theorem, again for up to $n$ summands; removing the subindices in each of these notions amounts to stating that the corresponding {\it full} version of Hindman's theorem, without restrictions on the number of summands, is satisfied. From this, and keeping in mind that the requirement of pairwise disjointness turns the pairwise disjoint version of Hindman's theorem into a stronger statement than the corresponding Boolean version, we immediately get the following implications for a set $X$ (note that, trivially, any set $X$ is finite if and only if it is \hb{1}-finite, which happens if and only if it is \hd{1}-finite; since given a colouring of $[X]^{<\omega}$ it suffices to restrict such colouring to the set $\{\{x\}\big|x\in X\}$ and then apply the pigeonhole principle):

\centerline{\xymatrix{
\text{Finite} \ar@{=>}[r] & \mathrm{H}^{\mathrm{B}}_2\text{-finite} \ar@{=>}[r] \ar@{=>}[d] & \mathrm{H}^{­\mathrm{B}}_3\text{-finite} \ar@{=>}[r] \ar@{=>}[d] & \mathrm{H}^{\mathrm{B}}_4\text{-finite} \ar@{=>}[r] \ar@{=>}[d] & \cdots \ar@{=>}[r] & \mathrm{H}^{\mathrm{B}}\text{-finite} \ar@{=>}[d] & \\
 & \mathrm{H}^{\mathrm{D}}_2\text{-finite} \ar@{=>}[r] & \mathrm{H}^{\mathrm{D}}_3\text{-finite} \ar@{=>}[r] & \mathrm{H}^{\mathrm{D}}_4\text{-finite} \ar@{=>}[r] & \cdots \ar@{=>}[r] & \mathrm{H}^{\mathrm{D}}\text{-finite} \ar@{=>}[r] & \text{D-finite}.
}}
Each of the above properties defines a finiteness class; each of these classes will also be denoted by {\bf X-}\fin, where X is either \hd{n} or \hb{n} for some $n<\omega$, or the same but without subindex. In spite of the apparent abundance of notions in the above diagram, we note the very surprising and notable fact that all but three of these notions can be proved to be equivalent in $\zf$.

\begin{theorem}\label{equivalencefour}
Let $X$ be an arbitrary set. Then, the following four statements are equivalent:
\begin{enumerate}
\item $X$ is \hd{}-finite.
\item $[X]^{<\omega}$ is D-finite.
\item $X$ is \hd{2}-finite.
\item $X$ is \hb{4}-finite.
\end{enumerate}
\end{theorem}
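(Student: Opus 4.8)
The plan is to establish the chain of implications $(2)\Rightarrow(1)\Rightarrow(3)\Rightarrow(4)\Rightarrow(2)$, since several of these arrows are either immediate from the defining diagram or very easy. The implications $(1)\Rightarrow(3)$ and $(3)\Rightarrow(4)$ are already recorded in the implication diagram preceding the theorem: $\hd{}$-finite implies $\hd{2}$-finite (removing the subindex only strengthens the hypothesis on $Y$, hence weakens the finiteness notion, so $\hd{}$-finite $\Rightarrow\hd{n}$-finite for every $n$), and $\hd{2}$-finite implies $\hb{4}$-finite follows by going down the leftmost vertical arrow and then right along the top row, or directly by the observation below relating Boolean sums to disjoint unions. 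So the real content is in the two implications $(2)\Rightarrow(1)$ and $(4)\Rightarrow(2)$.

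For $(2)\Rightarrow(1)$: assume $[X]^{<\omega}$ is $D$-finite; we must produce a colouring $c:[X]^{<\omega}\longrightarrow 2$ such that no infinite pairwise disjoint $Y\subseteq[X]^{<\omega}$ has $\fu(Y)$ monochromatic. The key point is that if $Y\subseteq[X]^{<\omega}$ is infinite and pairwise disjoint, then the map sending a nonempty finite $F\subseteq Y$ to $\bigcup F$ is injective, so $\fu(Y)$ is itself an infinite subset of $[X]^{<\omega}$; moreover, for any fixed $y_0\in Y$, the assignment $A\mapsto A\bigtriangleup y_0$ is an injection of $\fu(Y)$ into $[X]^{<\omega}$ whose image is \emph{not} contained in $\fu(Y)$ in a controlled way. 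Concretely, I would argue by contradiction: if $X$ were $\hd{}$-infinite, then \emph{every} colouring of $[X]^{<\omega}$ admits an infinite pairwise disjoint $Y$ with $\fu(Y)$ monochromatic; applying this to a carefully chosen colouring (for instance, one coding the parity of $|A|$, or one built to distinguish $\fu(Y)$ from a shifted copy) should let us extract from such a $Y$ an explicit proper subset of $[X]^{<\omega}$ in bijection with $[X]^{<\omega}$, contradicting $D$-finiteness. Alternatively, and probably more cleanly, one shows the contrapositive: if $X$ is $\hd{}$-infinite then $[X]^{<\omega}$ is $D$-infinite, by using a monochromatic $\fu(Y)$ for a generic colouring to build an injection $\omega\hookrightarrow[X]^{<\omega}$ — but since $Y$ itself is already an infinite subset of $[X]^{<\omega}$, in fact $[X]^{<\omega}$ is $D$-infinite as soon as $X$ is infinite, so this direction is essentially the observation that $\hd{}$-infinite sets are infinite, combined with a diagonalization ensuring the colouring actually obstructs monochromaticity when $[X]^{<\omega}$ is $D$-finite. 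The honest statement is that when $[X]^{<\omega}$ is $D$-finite, one fixes an injection-free structure and uses the absence of a countable subset of $[X]^{<\omega}$ to rule out infinite monochromatic $\fu(Y)$: given any infinite pairwise disjoint $Y$, the set $\fu(Y)$ is countable, contradicting $D$-finiteness of $[X]^{<\omega}$ unless no such $Y$ exists — wait, $Y$ need not be countable, so one must colour so that any monochromatic $\fu(Y)$ yields a \emph{countable} such set; colouring by $|A|\bmod 2$ forces a monochromatic $\fu(Y)$ to consist of sets of one fixed parity, and among pairwise disjoint $y$'s one can then extract an $\omega$-sequence $y_1,y_2,\dots$ and note $y_1, y_1\bigtriangleup y_2\bigtriangleup y_3, \dots$ gives a countable subset. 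This is the step to get right carefully.

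For $(4)\Rightarrow(2)$, the contrapositive: assume $[X]^{<\omega}$ is $D$-infinite; I must show $X$ is $\hb{4}$-infinite, i.e.\ every $c:[X]^{<\omega}\longrightarrow 2$ admits an infinite $Y\subseteq[X]^{<\omega}$ with $\fs_{\leq 4}(Y)$ monochromatic. Since $[X]^{<\omega}$ is $D$-infinite, it has a countable subset, i.e.\ there is an injection $n\mapsto A_n$ of $\omega$ into $[X]^{<\omega}$. The point is that the Boolean group $[X]^{<\omega}$ then contains an isomorphic copy of the countably infinite Boolean group $[\omega]^{<\omega}$: passing to a subsequence we may assume the $A_n$ are "independent" in the sense that the map $F\mapsto\bigtriangleup_{n\in F}A_n$ (for $F\in[\omega]^{<\omega}$) is injective — this requires a small argument, thinning out the $A_n$ one at a time so that each new $A_n$ lies outside the (finite, at each finite stage) subgroup generated by the previous ones; here finiteness of the sets $A_n$ and of partial subgroups is what makes the thinning possible without choice. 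Having embedded $[\omega]^{<\omega}$ as a subgroup $H\leq[X]^{<\omega}$, pull back $c\restriction H$ to a colouring $c'$ of $[\omega]^{<\omega}$, apply the \emph{classical} (ZF-provable) Hindman's theorem in its finite-unions form, Theorem~\ref{hindmanthm}\eqref{hindmanunions} — note $\omega$ is well-orderable so no choice is needed — to get an infinite pairwise disjoint $D\subseteq[\omega]^{<\omega}$ with $\fu(D)$ monochromatic for $c'$; then the image $Y=\{\bigtriangleup_{n\in d}A_n : d\in D\}\subseteq H\subseteq[X]^{<\omega}$ is infinite and $\fs(Y)$, a fortiori $\fs_{\leq 4}(Y)$, is monochromatic for $c$. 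This closes the cycle.

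The main obstacle, I expect, is the thinning argument embedding the countably infinite Boolean group into $[X]^{<\omega}$ from a bare countable subset, carried out without any choice beyond what a fixed injection $\omega\hookrightarrow[X]^{<\omega}$ already provides; one must be careful that "remove from the sequence all indices $n$ for which $A_n$ lies in the subgroup generated by $A_0,\dots,A_{n-1}$, and reindex" is a legitimate $\zf$ recursion (it is, since at each stage the relevant subgroup is finite and membership is decidable, and the reindexing of an infinite subset of $\omega$ is canonical), and that the resulting sequence is still infinite — which holds precisely because a finitely generated Boolean group is finite, so if only finitely many $A_n$ survived, the original sequence would range over a finite set, contradicting injectivity. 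A secondary subtlety is making sure the parity colouring in $(2)\Rightarrow(1)$ genuinely forces a countable subset of $[X]^{<\omega}$ out of any infinite pairwise disjoint monochromatic $Y$; listing this explicitly (take any injection of $\omega$ into $Y$, which exists once we know $Y$ itself is $D$-infinite — but $Y$ might be $D$-finite, so instead colour to directly forbid \emph{uncountable} monochromatic configurations, or argue that $\fu(Y)\cup\{\,A\bigtriangleup y_0 : A\in\fu(Y)\,\}$ for a fixed $y_0\in Y$ is a disjoint union of two equinumerous copies, giving a proper self-embedding of $[X]^{<\omega}$) is the place where one earns the equivalence with $D$-finiteness of $[X]^{<\omega}$ rather than of $X$.
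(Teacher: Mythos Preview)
Your cycle $(2)\Rightarrow(1)\Rightarrow(3)\Rightarrow(4)\Rightarrow(2)$ has the ``easy'' arrows pointing the wrong way. You write that ``$\mathrm{H}^{\mathrm D}$-finite implies $\mathrm{H}^{\mathrm D}_2$-finite (removing the subindex only strengthens the hypothesis on $Y$, hence weakens the finiteness notion)'', but this is backwards: removing the subindex asks for \emph{all} of $\fu(Y)$ to be monochromatic, which is a stronger demand on $Y$, so it is \emph{harder} to be $\mathrm H^{\mathrm D}$-infinite. Consequently $\mathrm H^{\mathrm D}_2$-finite $\Rightarrow$ $\mathrm H^{\mathrm D}$-finite, i.e.\ $(3)\Rightarrow(1)$ is the trivial direction, not $(1)\Rightarrow(3)$. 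Similarly, the diagram gives $\mathrm H^{\mathrm B}_4\Rightarrow\mathrm H^{\mathrm D}_4$ and $\mathrm H^{\mathrm D}_2\Rightarrow\mathrm H^{\mathrm D}_4$, but there is no path from $\mathrm H^{\mathrm D}_2$-finite to $\mathrm H^{\mathrm B}_4$-finite; your ``down the leftmost vertical arrow and then right along the top row'' traverses arrows against their orientation. So neither $(1)\Rightarrow(3)$ nor $(3)\Rightarrow(4)$ is free.

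As a result, the only substantive implication you actually establish is the Hindman-on-$\omega$ argument you place under $(4)\Rightarrow(2)$: from a countable subset of $[X]^{<\omega}$, thin to a pairwise-disjoint sequence and pull back. This is exactly the paper's $(1)\Rightarrow(2)$ (and, since it yields the full $\fu(Y)$ monochromatic, it automatically covers $(3)\Rightarrow(2)$ and $(4)\Rightarrow(2)$ via the genuine trivial arrows $(3)\Rightarrow(1)$, $(4)\Rightarrow(1)$). What is completely missing is the reverse: from $X$ being $\mathrm H^{\mathrm D}_2$-infinite (or $\mathrm H^{\mathrm B}_4$-infinite), produce an actual injection $\omega\hookrightarrow[X]^{<\omega}$. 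Your attempts under $(2)\Rightarrow(1)$ do not get there---the parity colouring $|A|\bmod 2$ fails (take $Y$ consisting of pairwise disjoint doubletons; every finite union is even), and the remark that ``$Y$ infinite already makes $[X]^{<\omega}$ D-infinite'' is exactly the unjustified step, since an infinite $Y$ can perfectly well be D-finite. The paper handles this with the colouring $c(x)=\lfloor\log_2|x|\rfloor\bmod 2$: for $\mathrm H^{\mathrm D}_2$, any two disjoint equal-sized sets and their union get different colours, forcing distinct cardinalities in a monochromatic $Y$ and hence an injection $Y\hookrightarrow\omega$; for $\mathrm H^{\mathrm B}_4$ (where disjointness is not assumed), an additional Ramsey-number argument bounds, for each $n$, how many $y\in Y$ can have $|y|=n$. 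That combinatorial core is absent from your proposal.
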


\begin{proof}
We will prove this equivalence by showing first that (1) implies (2), (2) implies (3) and (3) implies (1), thereby establishing the equivalence between statements (1), (2) and (3). Once this is done, it suffices to show that (4) implies any of (1), (2) or (3); and that any of (1), (2) or (3) implies (4); hence, we will show that (4) implies (1) and that (2) implies (4) to finish the proof.

\begin{description}
\item[(1)$\Rightarrow$(2)]
Suppose that $[X]^{<\omega}$ is D-infinite, that is, suppose that there exists an injective sequence $\langle x_n\big|n<\omega\rangle$ of elements of $[X]^{<\omega}$. Using this sequence, we can recursively build a new sequence $\langle y_n\big|n<\omega\rangle$ of nonempty, {\it pairwise disjoint} elements of $[X]^{<\omega}$. This is done as follows: make $y_0=x_{k_0}$, where $k_0$ is the least integer such that $x_{k_0}\neq\varnothing$; then, knowing $y_0,\ldots,y_n$, let $k_{n+1}$ be the least integer such that $x_{k_{n+1}}\not\subseteq\bigcup_{i=0}^n y_i$ (such an integer exists because there are only finitely many subsets of $\bigcup_{i=0}^n y_i$ and the sequence of $x_n$ is injective) and then let $y_{n+1}=x_{k_{n+1}}\setminus\bigcup_{i=0}^n y_i$.

Now suppose that we are given  $c:[X]^{<\omega}\longrightarrow 2$. We define a colouring $d:[\omega]^{<\omega}\longrightarrow 2$ by $d(x)=c\left(\bigcup_{i\in x}y_i\right)$. By Hindman's theorem on $[\omega]^{<\omega}$ (which is provable in $\zf$), $\omega$ is \hd{}-infinite, therefore we can find an infinite $Z\subseteq[\omega]^{<\omega}$ such that $\fu(Z)$ is monochromatic for $d$; let $i$ be the corresponding colour. Then we let $Y=\left\{\bigcup_{i\in x}y_i\big|x\in Z\right\}$, which is an infinite subset of $[X]^{<\omega}$, and we claim that $\fu(Y)$ is monochromatic for $c$ in colour $k$. This is because any element $y\in\fu(Y)$ is of the form
\begin{equation*}
y=\bigcup_{x\in F}\left(\bigcup_{i\in x}y_i\right)=\bigcup_{i\in\bigcup_{x\in F}x} y_i
\end{equation*}
for some finite $F\subseteq Z$; since $\bigcup_{x\in F}x\in\fu(X)$, then $c(y)=d\left(\bigcup_{x\in F}x\right)=i$, and so $\fu(Y)$ is indeed monochromatic in colour $i$. This shows that $X$ is \hd{}-infinite.

\item[(2)$\Rightarrow$(3)] 
Suppose that $X$ is \hd{2}-infinite, and we will show that $[X]^{<\omega}$ must be D-infinite. To see this, consider the colouring $c:[X]^{<\omega}\longrightarrow 2$ given by $c(x)=\lfloor{\log_2|x|}\rfloor\mod 2$. By assumption there is an infinite, pairwise disjoint $Y\subseteq[X]^{<\omega}$ such that $\fu_{\leq 2}(Y)$ is monochromatic for $c$. We will argue that every two distinct elements from $Y$ must have different cardinalities; to do this, we use the following lemma (whose content is fairly trivial, but which we state explicitly because it will be used again later).

\begin{lemma}\label{colourlog2}
Let $x$ and $y$ be two disjoint finite sets with $|x|=|y|$. Then $\lfloor\log_2(|x\cup y|)\rfloor=1+\lfloor\log_2|x|\rfloor$. In particular, the set $\{x,y,x\cup y\}$ cannot be monochromatic for $c$ if $c(z)=\lfloor\log_2|z|\rfloor\mod 2$ for all $z\in\dom(c)$.
\end{lemma}

\begin{proof}
Let $x,y$ be as in the hypothesis of the lemma. Since $x$ and $y$ are disjoint, we have that $|x\cup y|=|x|+|y|=2|x|$ and therefore
\begin{equation*}
\lfloor{\log_2 |x\cup y|}\rfloor=\lfloor{\log_2(2|x|)}\rfloor=1+\lfloor{\log_2|x|}\rfloor.
\end{equation*}
\end{proof}

Hence, if we had two distinct $x,y\in Y$ with the same cardinality, Lemma~\ref{colourlog2} would imply that $x$ and $y$ must have a colour different from that of $x\cup y$, which contradicts the assumption on $Y$ because $x,y,x\cup y\in\fu_{\leq 2}(Y)$.

We have thus established that the cardinality mapping $|\cdot|:Y\longrightarrow\omega$ is injective; this implies that, if we let $M$ be the range of that mapping, then $|\cdot|:Y\longrightarrow M$ is a bijection. Since $M\approx\omega$, $Y$ is a countably infinite subset of $[X]^{<\omega}$, witnessing that $[X]^{<\omega}$ is D-infinite.

\item[(3)$\Rightarrow$(1)]
It follows directly from the definitions that if $X$ is \hd{}-infinite, then it is also \hd{2}-infinite.

\item[(4)$\Rightarrow$(1)]
It is readily apparent from the definitions that every \hd{}-infinite set must be \hb{}-infinite, and in particular also \hb{4}-infinite.

\item[(2)$\Rightarrow$(4)]
Suppose $X$ is \hb{4}-infinite, and consider the colouring $c:[X]^{<\omega}\longrightarrow 2$ given by $c(x)=\lfloor\log_2|x|\rfloor\mod 2$. By assumption, there exists an infinite set $Y\subseteq[X]^{<\omega}$ such that $\fs_\leq 4(Y)$ is monochromatic. The following lemma will allow us to finish the proof. To state the lemma, start by defining a function $F:\omega\times\omega\longrightarrow\omega$, by recursion on the second parameter, by $F(n,0)=4$ and $F(n,k+1)=2^n(R(F(n,k))-1)+2$, where $R(m)$ denotes the Ramsey number for obtaining a monochromatic complete $m$-graph from two colours\footnote{That is, $R(m)$ is the least integer $M$ such that for every colouring of $[M]^2$ with two colours, there exists a subset $Z\subseteq M$ with $|Z|=m$ and such that all elements of $[Z]^2$ have the same colour.}. Note that, for every $n$ and $k$, the number $F(n,k)\geq 4$.

\begin{lemma}\label{theonethatusesramsey}
Suppose that $Y\subseteq[X]^{<\omega}$ is such that $\fs_{\leq 4}(Y)$ is monochromatic for the colouring $c$ (as defined above). Then, for every $n\in\omega$, there are less than $F(n,n)$ many $y \in Y$ such that $|y|=n$.
\end{lemma}

To see how to finish our proof from this lemma, note that, if $Y\subseteq[X]^{<\omega}$ is monochromatic for $c$, then the lemma implies that, for every $n<\omega$, the set $y_n=\bigcup\{y\in Y\big||y|=n\}$ is finite (as it is a finite union of finite sets). Hence, the sequence $\langle y_n\big|n<\omega\rangle$ is a sequence of elements of $[X]^{<\omega}$; notice that this sequence has elements of arbitrarily large cardinality (since for infinitely many $n$ there is at least a $y$ with $|y|=n$, and hence $|y_n|\geq n$) and therefore one can extract an injective subsequence from it. This shows that $[X]^{<\omega}$ is D-infinite, and we are done.

\begin{proof}[Proof of Lemma~\ref{theonethatusesramsey}]
Suppose that $Y\subseteq[X]^{<\omega}$ is a set such that $\fs_{\leq 4}(Y)$ is monochromatic for $c$, and assume without loss of generality that every element of $Y$ has cardinality $n$. Working towards a contradiction, we further assume that $|Y|\geq F(n,n)$ and we will proceed to prove, by induction on $k\leq n$, that one can find at least $F(n,n-k)$ many distinct elements of $Y$ such that any two of them have an intersection of cardinality greater than $k$. Once we manage to prove that, the contradiction will be apparent, since in particular we will have $F(n,n-n)=4>2$ distinct elements of $Y$ whose intersection has cardinality greater than $n$, which is impossible since these elements were assumed to have themselves cardinality $n$.

The case $k=0$ is easy, as it amounts to showing that no two elements of $Y$ can be disjoint. This follows directly from Lemma~\ref{colourlog2}.

Now suppose that the result holds for $k$. Assume that $Y$ has been thinned out so that it only contains the $F(n,n-k)$ many elements, guaranteed to exist by induction hypothesis, such that any two of them have an intersection of cardinality at least $k$. We pick an arbitrary $y_0\in Y$ and we notice that, for each of the remaining $F(n,n-k)-1=2^n(R(F(n,n-(k+1)))-1)+1$ many $y\in Y$, the intersection $y\cap y_0\subseteq y_0$ can be one of $2^n$ many possibilities; hence by the pigeonhole principle there is a fixed $r$ and a set $Y'\subseteq Y\setminus\{y_0\}$ such that $|Y'|=R(F(n,n-(k+1)))$ and $(\forall y\in Y')(y\cap y_0=r)$, with $|r|\geq k$ by induction hypothesis. Now define a colouring on pairs of elements of $Y'$ by $d(y,z)=1$ if and only if $(y\setminus r)\cap(z\setminus r)\neq\varnothing$. By Ramsey's theorem, we can find a further $Y''\subseteq Y'$ such that $|Y''|=F(n,n-(k+1))$ and such that all pairs of elements from $Y''$ receive the same colour from $d$. We claim that this colour has to be $1$; to see this, assume by contradiction that it is not. Since $F(n,n-(k+1))\geq 4$, one can pick distinct $y_1,y_2,y_3,y_4\in Y''$ and our assumption about the colour means that the sets $y_i\setminus y_0$, $1\leq i\leq 4$, are pairwise disjoint; now, since $y_i\cap y_0=r$ for $1\leq i\leq 4$, the conclusion is that for any $1\leq i<j\leq 4$, we have $y_i\cap y_j=r$. This means that $y_1\bigtriangleup y_2$ and $y_3\bigtriangleup y_4$ are disjoint, both belong to $\fs_{\leq 4}(Y)$, and they both have cardinality $2n-|r|$. By Lemma~\ref{colourlog2}, $(y_1\bigtriangleup y_2)\bigtriangleup(y_3\bigtriangleup y_4)$ must have a different colour than $y_1\bigtriangleup y_2$. But this is a contradiction, since $y_1\bigtriangleup y_2\bigtriangleup y_3\bigtriangleup y_4\in\fs_{\leq 4}(Y)$. This contradiction shows that, in fact, the colour that all pairs from $Y''$ share must be $1$. This means that any two elements $y,w\in Y''$ must have an intersection that properly contains $r$, and thus this intersection has cardinality strictly greater than $|r|\geq k$, and we are done proving the inductive step.

The end of the above inductive proof finishes the proof of the Lemma.\qedhere$_{\text{Lemma~\ref{theonethatusesramsey}}}$
\end{proof}
\end{description}
\qedhere$_{\text{Theorem~\ref{equivalencefour}}}$
\end{proof}

\begin{corollary}
In $\zf$, the notions of \hd{}-finite and \hb{}-finite are equivalent, and either of them is equivalent to \hd{n}-finite for any $n\in\mathbb N\setminus\{1\}$ and also to \hb{n} finite for any $n\in\mathbb\setminus\{1,2,3\}$.
\end{corollary}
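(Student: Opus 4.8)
The plan is a short diagram chase on top of Theorem~\ref{equivalencefour}. That theorem already tells us that the four statements ``$X$ is \hd{}-finite'', ``$[X]^{<\omega}$ is D-finite'', ``$X$ is \hd{2}-finite'' and ``$X$ is \hb{4}-finite'' are mutually equivalent in $\zf$; so the remaining work is only to slot each of the other notions appearing in the diagram that precedes Theorem~\ref{equivalencefour}---namely \hd{n}-finiteness for $n\ge 3$, and \hb{n}-finiteness for $n\ge 5$ together with \hb{}-finiteness---between two of these four, using the implications that diagram already records.

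First I would dispatch the \hd{} family. The bottom row of the diagram gives, for every $n\ge 2$, the chain of implications ``$X$ is \hd{2}-finite'' $\Rightarrow$ ``$X$ is \hd{n}-finite'' $\Rightarrow$ ``$X$ is \hd{}-finite''; since Theorem~\ref{equivalencefour} identifies the two ends of this chain, each intermediate \hd{n}-finiteness (for $n\ge 2$) is equivalent to \hd{}-finiteness, so the whole \hd{} row collapses to a single notion, equivalent to the D-finiteness of $[X]^{<\omega}$.

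Next I would handle the \hb{} family, where the only slightly delicate point is to close the loop. The rightmost vertical arrow of the diagram gives ``$X$ is \hb{}-finite'' $\Rightarrow$ ``$X$ is \hd{}-finite'' (the pairwise-disjoint form of Hindman's theorem being the stronger statement), and Theorem~\ref{equivalencefour} then returns us to ``$X$ is \hb{4}-finite''. Composing this with the top row of the diagram---which yields, for every $n\ge 4$, ``$X$ is \hb{4}-finite'' $\Rightarrow$ ``$X$ is \hb{n}-finite'' $\Rightarrow$ ``$X$ is \hb{}-finite''---shows that \hb{4}-finite, \hb{5}-finite, $\dots$, and \hb{}-finite are all equivalent, hence (again by Theorem~\ref{equivalencefour}) equivalent to \hd{}-finiteness and to D-finiteness of $[X]^{<\omega}$. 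Putting the two families together gives every equivalence claimed in the corollary. I do not anticipate any real obstacle: the argument is purely formal, the one thing to keep in mind being that the loop on the Boolean side must be closed through the vertical arrow from \hb{}-finiteness to \hd{}-finiteness, and that the corollary deliberately omits \hb{2}-finite and \hb{3}-finite, whose relationship to the rest is a separate question addressed later.
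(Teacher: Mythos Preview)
Your proposal is correct and is exactly the argument the paper has in mind: the corollary is stated there without proof, as an immediate consequence of Theorem~\ref{equivalencefour} together with the chain of trivial implications in the diagram preceding it. Your diagram chase simply spells out that implicit reasoning, including the one nontrivial point of closing the Boolean loop via the vertical arrow $\mathrm{H}^{\mathrm{B}}\text{-finite}\Rightarrow\mathrm{H}^{\mathrm{D}}\text{-finite}$.
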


In other words, for any set $X$, in $\zf$, the ``pairwise disjoint'' version of Hindman's theorem (even for only two summands) at $X$ is equivalent to the ``Boolean'' version of the theorem (even for only four summands) at $X$. Note that this provides us with a way of answering Leader's question in the affirmative: at least if one phrases Hindman's theorem in the pairwise disjoint version, for infinite sets in $\zf$ it suffices to know that the theorem is satisfied for up to two summands to conclude that the full theorem is satisfied. On the other hand, we will see in the next section that \hb{2}-finiteness is not equivalent to \hb{}-finiteness; this provides us with a different way of answering Leader's question, this time in the negative: if one considers Hindman's theorem for Boolean groups in $\zf$, it is possible for Hindman's theorem to hold for up to two summands without the full version of the theorem being satisfied.

Since our jungle of different flavours of Hindman finiteness has collapsed to at most three non-equivalent notions, we introduce the following simplified notations.

\begin{definition}
Let $X$ be a set,
\begin{enumerate}
\item we will say that $X$ is \hi{2}-finite if it is what used to be called \hb{2}-finite,
\item we will say that $X$ is \hi{3}-finite if it is what used to be called \hb{3}-finite, and
\item we will say that $X$ is H-finite if it is what used to be called \hb{}-finite (equivalently, what used to be called \hb{n}-finite for $n\in\mathbb N\setminus\{1,2,3\}$; also equivalently, what used to be called \hd{n}-finite for $n\in\mathbb N\setminus\{1\}$, or, also equivalently, what used to be called \hd{}-finite).
\end{enumerate}
\end{definition}

We now proceed to obtain a couple of implications between some of these notions of Hindman finiteness and some of the other notions of finiteness that have been introduced before.

\begin{theorem}
Every C-finite set is H-finite.
\end{theorem}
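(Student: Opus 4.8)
The plan is to reduce to the characterization of H-finiteness already obtained: by Theorem~\ref{equivalencefour} (equivalence of statements (1) and (2)) together with its corollary, a set $X$ is H-finite if and only if $[X]^{<\omega}$ is D-finite. Thus it suffices to prove the implication ``$X$ C-finite $\Rightarrow$ $[X]^{<\omega}$ D-finite'', and I would do this by contraposition: assuming that $[X]^{<\omega}$ is D-infinite, I would exhibit a surjection $g:X\longrightarrow\omega$, thereby witnessing that $X$ is C-infinite.

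So fix an injective sequence $\langle x_n\mid n<\omega\rangle$ of elements of $[X]^{<\omega}$. First I would run, verbatim, the recursion carried out in the proof of the implication (1)$\Rightarrow$(2) of Theorem~\ref{equivalencefour}: applied to $\langle x_n\mid n<\omega\rangle$, this produces a sequence $\langle y_n\mid n<\omega\rangle$ of nonempty, pairwise disjoint elements of $[X]^{<\omega}$ (recall that the construction sets $y_0=x_{k_0}$ for the least $k_0$ with $x_{k_0}\neq\varnothing$, and $y_{n+1}=x_{k_{n+1}}\setminus\bigcup_{i=0}^n y_i$ for the least $k_{n+1}$ with $x_{k_{n+1}}\not\subseteq\bigcup_{i=0}^n y_i$, such an index always existing because $\bigcup_{i=0}^n y_i$ has only finitely many subsets while the $x_n$ are pairwise distinct).

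With the $y_n$ in hand, the surjection writes itself: define $g:X\longrightarrow\omega$ by letting $g(x)$ be the unique $n$ such that $x\in y_n$ whenever such an $n$ exists (unique because the $y_n$ are pairwise disjoint), and $g(x)=0$ otherwise. Since every $y_n$ is nonempty, each $n<\omega$ lies in the range of $g$, so $g$ is onto $\omega$ and $X$ is C-infinite, as desired.

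I do not anticipate any serious obstacle; the single point worth being careful about is that one should \emph{not} attempt to build the surjection by picking an element out of each $x_n$, since choosing from arbitrary finite subsets of a possibly non-well-orderable $X$ is not available in $\zf$. The pairwise-disjoint reduction borrowed from Theorem~\ref{equivalencefour} sidesteps this precisely because that recursion only ever refers to ``the least integer such that \ldots'' and never actually chooses points of $X$.
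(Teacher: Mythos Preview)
Your proposal is correct and follows the same overall route as the paper: both argue by contrapositive and invoke Theorem~\ref{equivalencefour} to reduce ``$X$ is H-infinite'' to ``$[X]^{<\omega}$ is D-infinite'', then conclude that $X$ is C-infinite. The only difference is in how that last step is handled: the paper simply observes that $[X]^{<\omega}\subseteq\wp(X)$, so $\wp(X)$ is D-infinite, and then quotes \cite[Lemma~4.11]{herrlich-choice} for the equivalence of ``$\wp(X)$ is D-infinite'' with ``$X$ is C-infinite''; you instead unpack this citation by reusing the disjointification recursion from the proof of (1)$\Rightarrow$(2) in Theorem~\ref{equivalencefour} and writing down the surjection $g$ explicitly. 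Your version is thus a self-contained proof of the cited lemma's relevant direction, and your caution about not choosing points of $X$ is exactly the right concern.
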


\begin{proof}
Suppose that $X$ is an H-infinite set. By Theorem~\ref{equivalencefour}, this implies that $[X]^{<\omega}$ is D-infinite. In particular, $\wp(X)$ is D-infinite, but this is, by~\cite[Lemma 4.11]{herrlich-choice}, equivalent to the fact that $X$ is C-infinite, which finishes the proof.
\end{proof}

\begin{theorem}\label{h2finimpliesr2fin}
Every \hi{2}-finite set is \ra{2}-finite.
\end{theorem}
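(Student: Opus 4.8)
The plan is to prove the contrapositive: assuming $X$ is \ra{2}-infinite, I will show that $X$ is \hi{2}-infinite. Unravelling the definitions, this amounts to the following: given an arbitrary colouring $c:[X]^{<\omega}\longrightarrow 2$, I must exhibit an infinite family $Y\subseteq[X]^{<\omega}$ such that $\fs_{\leq 2}(Y)$ is monochromatic.

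The first step is to apply the hypothesis that $X$ is \ra{2}-infinite directly to the colouring $c\upharpoonright[X]^2$ of the two-element subsets of $X$; this yields an infinite set $W\subseteq X$ and a colour $\varepsilon<2$ with $c(e)=\varepsilon$ for every $e\in[W]^2$. Observe that we are applying \ra{2}-infiniteness to $X$ itself and never pass to an infinite subset, so the failure of \ra{2}-infiniteness to be hereditary (cf. the discussion around Proposition~\ref{densevsgood}) is not a concern here.

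The heart of the matter is the choice of $Y$. The obvious candidate $Y=\{\{x\}\mid x\in W\}$ is useless, since $\fs_{\leq 2}(Y)$ would then contain all the singletons $\{x\}$, $x\in W$, and we have no control over their $c$-colours. Instead, I would fix an arbitrary $w_0\in W$ and take $Y=\{\{w_0,x\}\mid x\in W\setminus\{w_0\}\}$, a ``star'' centred at $w_0$; this $Y$ is infinite because $x\mapsto\{w_0,x\}$ is an injection of the infinite set $W\setminus\{w_0\}$ into $[X]^{<\omega}$. The key point is that for distinct $x,x'\in W\setminus\{w_0\}$ the symmetric difference collapses, $\{w_0,x\}\bigtriangleup\{w_0,x'\}=\{x,x'\}$, so $\fs_{\leq 2}(Y)$ is made up entirely of two-element subsets of $W$; a one-line check in fact gives $\fs_{\leq 2}(Y)=[W]^2$ (the pairs through $w_0$ are the members of $Y$, the pairs avoiding $w_0$ are the pairwise symmetric differences). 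Since $c$ is constantly $\varepsilon$ on $[W]^2$, the set $\fs_{\leq 2}(Y)$ is monochromatic, and the proof is complete.

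I do not anticipate a genuine obstacle: the whole trick is to realise that $Y$ should be taken to be a star, so that its pairwise symmetric differences degenerate into pairs and $\fs_{\leq 2}(Y)$ stays inside the region $[W]^2$ where monochromaticity has already been arranged; the remaining verifications ($Y$ is infinite, $\fs_{\leq 2}(Y)=[W]^2$) are entirely routine.
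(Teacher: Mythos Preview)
Your proof is correct and is essentially identical to the paper's own argument: both take the contrapositive, apply \ra{2}-infiniteness to $c\upharpoonright[X]^2$ to get a monochromatic infinite set (your $W$, the paper's $Z$), fix a basepoint, and let $Y$ be the ``star'' of pairs through that basepoint, so that $\fs_{\leq 2}(Y)\subseteq[W]^2$. The only cosmetic difference is that you observe the stronger equality $\fs_{\leq 2}(Y)=[W]^2$, whereas the paper is content with the inclusion.
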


\begin{proof}
Suppose that $X$ is \ra{2}-infinite, and let $c:[X]^{<\omega}\longrightarrow 2$ be a colouring. Then there is an infinite $Z\subseteq X$ such that $[Z]^2$ is monochromatic for the colouring $c\upharpoonright[X]^2$. Fix $z\in Z$ and define $Y=\left\{\{y,z\}\big|y\in Z\setminus\{z\}\right\}$. Notice that every element of $Y$ is a doubleton whose elements belong to $Z$; notice also that, if $\{y,z\}\in Y$ and $\{y',z\}\in Y$ are two distinct elements, then $\{y,z\}\bigtriangleup\{y',z\}=\{y,y'\}$ is also a doubleton whose elements belong to $Z$. This shows that $\fs_{\leq 2}(Y)\subseteq[Z]^2$, and therefore $\fs_{\leq 2}(Y)$ is monochromatic for $c$; thus $X$ is \hi{2}-infinite.
\end{proof}

\begin{figure}
\centerline{\xymatrix{
 & & \text{Finite} \ar@{=>}[dl] \ar@{=>}[d] \ar@{=>}[ddrr] & & \\
 & \text{A-finite} \ar@{=>}[d] \ar@{=>}[ddl] & \text{H}_2\text{-finite} \ar@{=>}[d] \ar@{=>}[dr] & & \\
 & \text{C-finite} \ar@{=>}[d] \ar@{=>}[dr] & \text{H}_3\text{-finite} \ar@{=>}@[red][d] & \text{R}^2\text{-finite} \ar@{=>}[dddl] & \text{R}^n\text{-finite}\ \ (n\geq 3) \ar@{=>}[dddll] \\
\text{B-finite} \ar@{=>}[ddrr] & \text{E-finite} \ar@{=>}[ddr] & \text{H-finite} \ar@{=>}[dd] & & \\
 & & & & \\
 & & \text{D-finite.} & & \\ 
}}
\caption{Implications provable in $\zf$ between the finiteness classes considered in this paper.}
\label{fig1}
\end{figure}
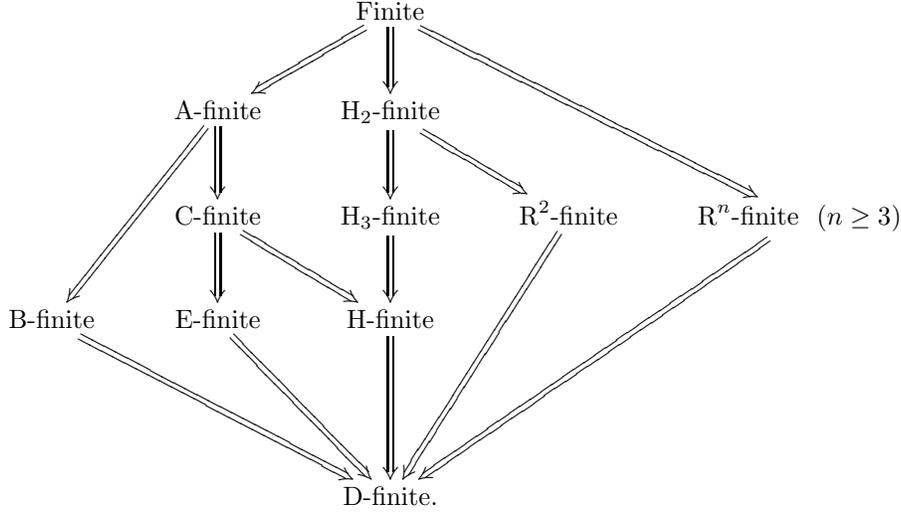

Figure~\ref{fig1} summarizes the $\zf$-provable implications between the different finiteness classes that have been considered in this paper; in the next section we will prove that, for the most part, no other arrows can be added to this diagram.

\section{Independence Results}\label{Sect:Independence}

We claim that the diagram in Figure~\ref{fig1} contains essentially all possible implications that are provable in $\zf$. To show that there are no others, we need to exhibit independence proofs by obtaining models with counterexamples to the remaining implications.

The method for independence proofs used in this paper is that of Fraenkel--Mostowski permutation models. In this method, one works in a set theory that allows atoms, elements of the set-theoretic universe that are not sets themselves. More formally, the theory $\zfa$ is a theory in a first-order language with non-logical symbols for equality and membership, as well as two special constant symbols $\varnothing$ and $A$; the first such symbol's intended meaning is for the empty set, whereas the second is supposed to stand for the set of all atoms. The axioms for $\zfa$ are just the usual axioms of $\zf$, appropriately modified to allow for the existence of atoms (for example, extensionality is no longer stated for any two objects, but only for any two objects that are not elements of $A$), plus an empty-set axiom stating $\neg(\exists x)(x\in\varnothing)$ and an atoms axiom stating that $(\forall z)(z\in A\iff(z\neq\varnothing\wedge\neg(\exists x)(x\in z)))$, that is, the set of all atoms contains precisely those objects that are not the empty set yet they have no elements. This theory (even with the extra assumption that $A$ is infinite) is known to be consistent relative to $\zf$ (see~\cite[p. 51, Problem 1]{jech-choice}).

When working in such a theory, every permutation of the atoms, $\pi\in S_A$, induces an automorphism of the set-theoretic universe defined recursively by $\pi(x)=\{\pi(y)\big|y\in x\}$. Given a subgroup $G$ of the full permutation group $S_A$ of the set of atoms, and given an arbitrary set $x$, a \textbf{support} for $x$ (relative to $G$, which will most of the time be implicit) is a set $E\subseteq A$ such that $(\forall\pi\in G)((\forall a\in E)(\pi(a)=a)\Rightarrow\pi(x)=x)$. Note that supports for a set $x$ are not unique, since if $E$ is a support for $x$ then so is every $F\supseteq E$. Note also that if $x$ is a {\it pure set} ---that is, if the transitive closure of $x$ does not intersect $A$--- then $\pi(x)=x$ for all $\pi\in S_A$; in particular pure sets always admit $\varnothing$ as a support.

All of the models considered in this paper are constructed as follows: one starts by assuming $\zfa+\ac$, in a universe where the set of atoms $A$ is countable, and one fixes a subgroup $G\leq S_A$. Then one calls a set {\it symmetric} if it has a support that is finite (relative to the subgroup $G$), and one collects the class $\hs$ of all hereditarily symmetric sets. Then~\cite[Theorem 4.1]{jech-choice}, the class $\hs$ will be a transitive model of $\zfa$ (containing $A$ and all pure sets) that, in general, will not satisfy $\ac$. For a more general treatment of Fraenkel--Mostowski permutation models, we direct the reader to~\cite[Chapter 4]{jech-choice}.

Given a set $X$, recursively define $\wp^\alpha(X)$ by $\wp^0(X)=X$, $\wp^{\xi+1}(X)=\wp(\wp^\xi(X))$ and $\wp^\xi(X)=\bigcup_{\eta<\xi}\wp^\eta(X)$ for $\xi=\bigcup\xi$. Note that, for each of the finiteness classes $\mathscr F$ considered in this paper, membership of a set $X$ to said class is a statement that can be decided by $\wp^\alpha(X)$, in the sense that $(X\in\mathscr F)\iff(X\in\mathscr F)^{\wp^\alpha(X)}$, for a finite---and in fact, a very small---ordinal $\alpha$. For example, $X$ is H-finite if and only if for every colouring $c:[X]^{<\omega}\longrightarrow 2$, there is an infinite $Y\subseteq[A]^{<\omega}$ such that $\fs(Y)$ satisfies an absolute statement that has $c$ as a parameter. Such a colouring $c$, if we define functions as usual using Kuratowski's ordered pairs, would be an element of $\wp^5(X)$; the relevant $X$ is simply an element of $\wp^2(X)$, and $\fs(Y)\in\wp^2(X)$ as well. Furthermore, the fact that $Y$ is infinite can be phrased using Tarski's definition of finiteness (every nonempty subset of $\wp(Y)$ has a $\subseteq$-minimal element), and so $Y$ is infinite if and only if $(Y\text{ is infinite})^{\wp^3(Y)}$. The conclusion of this is that a set $X$ is H-finite if and only if $(X\text{ is H-finite})^{\wp^5(X)}$. A similar analysis can be done with all the other finiteness classes considered in this paper.

Given all of the above, the template for our independence proofs will be as follows: in order to prove that X-finite does not imply Y-finite, we will describe a suitable $G\leq S_A$ in such a way that, in the class $\hs$, there is a set $Z$ which is X-finite but Y-infinite. In all of the proofs within this paper, the set $Z$ will be either the set of all atoms, or some carefully defined set of sets of atoms. Therefore the existence of an X-finite, Y-infinite set will be a statement satisfied not only by the Fraenkel--Mostowski model, but in fact by the structure $\wp^{11}(A)$, where $A$ is the set of atoms (the number 11 is overkill, to be safe; the reader that wishes to do so may more accurately bound it). The Jech--Sochor transfer theorem~\cite[Theorem 6.1]{jech-choice} (see also~\cite{jech-sochor}) states that, for every ordinal number $\alpha$ and every subgroup $G\leq S_A$, there exists a transitive model $\mathbf{W}$ of $\zf$ (concretely, $\mathbf{W}$ is a {\it symmetric extension} of the universe, i.e. a subclass of an extension of the universe by forcing) and an element $B\in\mathbf{W}$ such that the structure $(\wp^\alpha(A))^{\hs}$ is $\in$-isomorphic to the structure $(\wp^\alpha(B))^{\mathbf{W}}$, and therefore the existence of an X-finite, Y-infinite set within $\wp^{11}(A)$ in a Fraenkel--Mostowski model implies the existence of such a set in a model of $\zf$. Therefore, once we have obtained a set with the required properties in some Fraenkel--Mostowski permutation model, we will automatically know that our task is complete (by implicitly invoking the Jech--Sochor theorem).

Having outlined our strategy for independence proofs, we now proceed to describe the relevant permutation models of $\zfa$, as well as the relevant sets within them.

\subsection{The first Fraenkel model}

The first Fraenkel model is the model of $\zfa$ that arises from considering the class $\hs$ with respect to the full permutation group $G_1=S_A$ of the set of all atoms. It is well-known, and also easy to prove~\cite[p. 52, Problem 7]{jech-choice}, that in the first Fraenkel model the set $A$ of atoms is amorphous, that is, both infinite and A-finite (in fact, $A$ is strongly amorphous). It turns out that this set is also \r-infinite, for all $n\geq 2$, as we argue below.

\begin{proposition}\label{firstfrankelrinfinite}
For each $n\in\mathbb N\setminus{2}$, the set $A$ of atoms in the first Fraenkel model is \r-infinite.
\end{proposition}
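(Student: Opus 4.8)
The plan is to show that in the first Fraenkel model, the amorphous set $A$ of atoms satisfies Ramsey's theorem for $n$-element subsets, for every $n\geq 2$ (the case $n=1$ being trivially handled, and the case $n=2$ being the point of the exponent restriction: the statement as written says ``$n\in\mathbb N\setminus\{2\}$'', so we want \ra{n}-infiniteness for $n=1$ and for $n\geq 3$; by Theorem~\ref{amorphousrfinite} it would suffice to treat any single $n\geq 3$, or equivalently to show $A$ is \ra{n}-infinite for some $n\geq 2$, but note that Proposition~\ref{firstfrankelrinfinite} must be read in conjunction with the fact that $A$ \emph{is} \ra{2}-finite, which is why $2$ is excluded). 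Concretely, I would invoke Theorem~\ref{amorphousrfinite}: since $A$ is amorphous, it is either \ra{n}-finite for all $n\geq 2$ or \ra{n}-infinite for all $n\geq 1$; so it suffices to show $A$ is \ra{n}-infinite for \emph{some} $n\geq 3$, or better, to directly show it is not \ra{2}-finite's companion — wait, more carefully: Theorem~\ref{amorphousrfinite} gives a dichotomy, so to land in the ``\ra{n}-infinite for all $n$'' branch it suffices to exhibit one $n$ for which $A$ is \ra{n}-infinite. Thus the whole proposition reduces to a single verification.

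First I would take an arbitrary colouring $c:[A]^n\longrightarrow 2$ in the model $\hs$ and fix a finite support $E\subseteq A$ for $c$. The key structural fact about the first Fraenkel model is that any element with finite support $E$ is invariant under every permutation fixing $E$ pointwise; applying this to $c$, I claim that the colour $c(s)$ of an $n$-element set $s\in[A]^n$ depends only on the ``isomorphism type'' of $s$ over $E$ — that is, on the pair consisting of $s\cap E$ and $|s\setminus E|$ — because given two such sets $s,s'$ with $s\cap E=s'\cap E$ and $|s\setminus E|=|s'\setminus E|$, there is a permutation $\pi\in S_A$ fixing $E$ pointwise and mapping $s$ to $s'$, whence $c(s')=\pi(c)(\pi(s))=c(\pi(s))=c(s)$ after observing $\pi(c)=c$. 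Since there are only finitely many possible values of $s\cap E$ (subsets of the finite set $E$) and $|s\setminus E|$ ranges over $\{0,1,\dots,n\}$, the colouring $c$ restricted to $n$-element subsets of $A\setminus E$ is \emph{constant} (all such sets have $s\cap E=\varnothing$ and $|s\setminus E|=n$). Therefore $Y=A\setminus E$ is an infinite (indeed cofinite) subset of $A$ with $[Y]^n$ monochromatic, witnessing \ra{n}-infiniteness.

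The main obstacle — really the only subtlety — is the bookkeeping needed to see that a permutation $\pi$ fixing $E$ pointwise and carrying $s$ to $s'$ genuinely exists in $S_A$ and that the induced automorphism satisfies $c(\pi(s)) = \pi(c)(\pi(s)) = c(\pi(s))$; this is immediate once one recalls that $\pi(c)=c$ (because $E$ is a support for $c$ and $\pi$ fixes $E$) and that $\pi$ being an automorphism of the universe means $c(s)=i \iff \langle s,i\rangle\in c \iff \langle \pi(s),\pi(i)\rangle\in\pi(c)=c \iff c(\pi(s))=\pi(i)=i$ since $i\in 2$ is a pure set and hence fixed. One should also note that $A$ being infinite guarantees $A\setminus E$ is infinite (in fact cofinite), so $[A\setminus E]^n$ is nonempty and the argument is not vacuous. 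No use of $\ac$ inside $\hs$ is needed, since the witness $Y=A\setminus E$ has support $E$ and is manifestly in $\hs$, and the monochromatic colour is extracted from the single value $c$ takes on $[A\setminus E]^n$. Finally, combining with Theorem~\ref{amorphousrfinite}, we conclude $A$ is \ra{m}-infinite for all $m\geq 1$, in particular for all $m\in\mathbb N\setminus\{2\}$ as claimed (and in fact also for $m=2$, consistent with the dichotomy).
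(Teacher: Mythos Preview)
Your core argument---take a finite support $E$ for the colouring $c$, observe that any permutation fixing $E$ pointwise fixes $c$, and conclude that $[A\setminus E]^n$ is monochromatic---is exactly the paper's proof, and it is correct.

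However, your preamble contains a genuine error: you assert that ``$A$ \emph{is} \ra{2}-finite, which is why $2$ is excluded,'' and this is false. The set of atoms in the first Fraenkel model is \ra{2}-\emph{infinite}; your own direct argument works verbatim for $n=2$, and you yourself note at the end that $A$ is \ra{m}-infinite for all $m$ including $m=2$, so your write-up is internally inconsistent. The paper in fact uses precisely the case $n=2$ (via the contrapositive of Theorem~\ref{h2finimpliesr2fin}) to deduce that $A$ is \hi{2}-infinite in the paragraph immediately following this proposition. The ``$\setminus 2$'' in the statement is a typographical slip (presumably intended as $\setminus\{1\}$, or with $2=\{0,1\}$ read as an ordinal), not an exclusion of the case $n=2$.

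The detour through Theorem~\ref{amorphousrfinite} is also unnecessary: your direct argument already handles every $n$ uniformly, so there is no need to reduce to a single $n$ and then invoke the dichotomy.
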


\begin{proof}
Suppose that $c:[A]^n\longrightarrow 2$ is a colouring in $\hs$. Then there is a finite $F\subseteq A$ which is a support for $c$. We claim that the (infinite) set $A\setminus F$, which clearly belongs to $\hs$ (as it has $F$ as a support), is monochromatic for $c$. To see this, let $x,y\in[A\setminus F]^n$. It is easy to get a permutation $\pi\in S_A$ which fixes each element of $F$ and such that $\pi[x]=y$. Since $\pi$ fixes $F$, which in turn is a support for $c$, we must have that $\pi(c)=c$. In particular, if $c(x)=i$ then we have that
\begin{equation*}
(y,i)=(\pi(x),\pi(i))=\pi((x,i))\in\pi(c)=c,
\end{equation*}
which shows that $c(y)=i$ as well. Since $y\in[A\setminus F]^n$ was arbitrary, we conclude that the set $[A\setminus F]^n$ is monochromatic in colour $i$, and we are done.
\end{proof}

As a corollary of Proposition~\ref{firstfrankelrinfinite}, we can also conclude that the set of atoms in the first Fraenkel model is \hi{2}-infinite, by Theorem~\ref{h2finimpliesr2fin}. On the other hand, we proceed to argue below that this set is \hi{3}-finite.

\begin{proposition}\label{firstfrankelh3finite}
The set $A$ of atoms in the first Fraenkel model is \hi{3}-finite.
\end{proposition}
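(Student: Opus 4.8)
The plan is to exhibit a colouring $c:[A]^{<\omega}\longrightarrow 2$ in $\hs$ (equivalently, a colouring supported by $\varnothing$, i.e. invariant under all of $S_A$) such that no infinite $Y\subseteq[A]^{<\omega}$ has $\fs_{\leq 3}(Y)$ monochromatic. Since the group is the full symmetric group, the only symmetric colourings are the \emph{symmetric} ones: $c(x)$ can depend on $x$ only through $|x|$. So I would look for a function $g:\omega\longrightarrow 2$ and set $c(x)=g(|x|)$, and then the task reduces to a purely combinatorial/number-theoretic statement: find $g:\omega\longrightarrow 2$ such that for every infinite family $Y$ of pairwise... (wait, here there is no disjointness assumption — it is the Boolean $\mathrm{H}_3$) every infinite family $Y\subseteq[A]^{<\omega}$, the three-element "Boolean sums" $x$, $x\bigtriangleup y$, $x\bigtriangleup y\bigtriangleup z$ etc. cannot all get the same $g$-value.

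The key observation I would exploit is the same cardinality trick as in Lemma~\ref{colourlog2}: if $x,y$ are \emph{disjoint} with $|x|=|y|=n$ then $|x\bigtriangleup y|=2n$, and more generally for a symmetric difference of elements, cardinalities interact with the sizes of pairwise intersections. So the natural candidate is again $g(n)=\lfloor\log_2 n\rfloor\bmod 2$ (or a mild variant). First I would show that an infinite $Y$ monochromatic for $\fs_{\leq 3}$ cannot contain two distinct sets of the same cardinality: if $x,y\in Y$ with $|x|=|y|=n$ and $|x\cap y|=r$, then $|x\bigtriangleup y|=2(n-r)$, and one checks (a one-line computation in the spirit of Lemma~\ref{colourlog2}) that $\lfloor\log_2 2(n-r)\rfloor$ and $\lfloor\log_2 n\rfloor$ have opposite parities whenever $n-r$ lies in a suitable range — but since $r$ can be anything from $0$ to $n-1$, I need the colouring to be robust against all these $r$. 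This is the point where a plain $\lfloor\log_2\rfloor$ colouring is probably \emph{not} enough by itself, and I anticipate the main obstacle here: controlling three-term Boolean sums $x\bigtriangleup y$, $x\bigtriangleup z$, $y\bigtriangleup z$, $x\bigtriangleup y\bigtriangleup z$ simultaneously, where the three sets need not be disjoint and the inclusion–exclusion bookkeeping is messier.

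To get around this I would use a two-step combinatorial argument analogous to the proof that $\mathrm{H}^{\mathrm B}_4$-finite $\Leftrightarrow$ D-finite (Lemma~\ref{theonethatusesramsey}): suppose $Y$ is infinite and $\fs_{\leq 3}(Y)$ is monochromatic; restrict attention to elements of $Y$ of a fixed cardinality $n$ (there are infinitely many such for some $n$, or else the cardinality map is finite-to-one and $[A]^{<\omega}$ would be D-infinite, handled separately). Among these, use a pigeonhole/Ramsey-style thinning on the pattern of pairwise intersections to force many elements whose pairwise intersections all equal some fixed $r$; then for three such $x,y,z$ the sets $x\bigtriangleup y$, $y\bigtriangleup z$, $x\bigtriangleup z$ are each of cardinality $2(n-r)$ and $x\bigtriangleup y\bigtriangleup z$ has cardinality $3n-3r+|x\cap y\cap z|$ or so — and now I can play the $\lfloor\log_2\rfloor$ (or finer) colouring against this fixed configuration, forcing a monochromaticity violation among $x$, $x\bigtriangleup y$, and $x\bigtriangleup y\bigtriangleup z$. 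The upshot is that a monochromatic $Y$ would have to have its cardinality map injective, whence $[A]^{<\omega}$ would be D-infinite — but in the first Fraenkel model $[A]^{<\omega}$ is D-finite (each element has a finite support and $A$ is amorphous), a contradiction.

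I would therefore structure the write-up as: (1) reduce to symmetric colourings and hence to a colouring $c(x)=g(|x|)$; (2) pick the explicit $g$ (likely $\lfloor\log_2\rfloor\bmod 2$, possibly composed with a further parity trick to separate $2(n-r)$, $3(n-r)$-type values — this is the one routine computation I would need to pin down carefully); (3) prove the combinatorial lemma that $\fs_{\leq 3}$-monochromatic forces injectivity of $|\cdot|$ on $Y$, via the Ramsey/pigeonhole thinning on intersection patterns; (4) conclude that a monochromatic infinite $Y$ yields a D-infinite $[X]^{<\omega}$, contradicting D-finiteness of $[A]^{<\omega}$ in the first Fraenkel model. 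The main obstacle, to repeat, is step (3)'s handling of genuinely overlapping triples together with choosing $g$ so that all the relevant cardinalities $n$, $2(n-r)$, $3(n-r)+\cdots$ are forced into opposite colour classes; everything else is bookkeeping already rehearsed in Lemmas~\ref{colourlog2} and~\ref{theonethatusesramsey}.
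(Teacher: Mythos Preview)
Your approach has a genuine gap, and it misses a key simplification that the paper exploits.

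The paper's proof never attempts your step (3). Instead, it uses the permutation-model structure \emph{directly} to control $\fs_{\leq 3}(Y)$: given an infinite $Y\subseteq[A]^{<\omega}$ with finite support $F$, pick $y\in Y$ with some $a\in y\setminus F$, and two fresh atoms $b,c\notin y\cup F$. The transpositions $(a\ b)$ and $(a\ c)$ fix $F$ pointwise, so $z:=(y\setminus\{a\})\cup\{b\}$ and $w:=(y\setminus\{a\})\cup\{c\}$ both lie in $Y$. Then $y\bigtriangleup z\bigtriangleup w=y\cup\{b,c\}$ has cardinality exactly $|y|+2$. With the colouring $c(x)=0$ iff $|x|\equiv 0,1\pmod 4$ (not $\lfloor\log_2\rfloor$), this forces $c(y)\neq c(y\bigtriangleup z\bigtriangleup w)$, and the proof is done in a paragraph.

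Your route, by contrast, attempts a purely combinatorial reduction: ``find $g$ so that $\fs_{\leq 3}$-monochromatic for $c(x)=g(|x|)$ forces $|\cdot|$ injective on $Y$, hence $[A]^{<\omega}$ D-infinite.'' But this is exactly what the paper later shows \emph{cannot} work by a cardinality-based colouring alone: Proposition~4.10 and Corollary~4.11 build an H-finite set (so $[X]^{<\omega}$ is D-finite) for which \emph{every} colouring factoring through $|\cdot|$ admits an infinite $\fs_{\leq 3}$-monochromatic $Y$. So your step~(3), as a purely combinatorial lemma independent of the specific model, is false; and your step~(3) as stated (Ramsey/pigeonhole thinning inside $Y$) presupposes the ability to make infinitely many choices that the first Fraenkel model does not provide. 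The missing idea is that in this particular model you don't need any thinning at all: the support mechanism hands you $z,w\in Y$ with $|y\bigtriangleup z\bigtriangleup w|=|y|+2$ for free.
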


\begin{proof}
Within the class $\hs$, define the colouring $c:[A]^{<\omega}\longrightarrow 2$ given by
\begin{equation*}
c(x)=\begin{cases}
0;\text{ if }|x|\equiv0\mod 4\text{ or }|x|\equiv 1\mod 4, \\
1;\text{ otherwise.}
\end{cases}
\end{equation*}
Note that every permutation $\pi\in S_A$ preserves the cardinality of every finite set of atoms, and therefore the colouring $c$ is indeed hereditarily symmetric, as it has $\varnothing$ as a support. We proceed to prove that no (hereditarily symmetric) infinite set $Y\subseteq [A]^{<\omega}$ can possibly be such that $\fs_{\leq 3}(Y)$ is monochromatic for $c$. To see this, let $Y\subseteq[A]^{<\omega}$ be a hereditarily symmetric infinite set, and let $F\subseteq A$ be a finite set that is a support for $Y$. Since $F$ is finite and $Y$ is infinite, we can find a $y\in Y$ such that $y\not\subseteq F$. Fix an atom $a\in y\setminus F$ and fix two distinct atoms $b,c\in A\setminus(y\cup F)$. Let $\pi\in S_A$ be the transposition swapping $a$ and $b$, and let $\sigma\in S_A$ be the transposition swapping $a$ and $c$. Since both $\pi$ and $\sigma$ pointwise fix all elements of $F$, we have that $\pi(Y)=Y$ and $\sigma(Y)=Y$; therefore
\begin{equation*}
z:=(y\setminus\{a\})\cup\{b\}=\pi(y)\in\pi(Y)=Y,
\end{equation*}
and similarly
\begin{equation*}
w:=(y\setminus\{a\})\cup\{c\}=\sigma(y)\in\sigma(Y)=Y.
\end{equation*}
Thus $y\bigtriangleup z\bigtriangleup w\in\fs_{\leq 3}(Y)$. Note, however, that 
\begin{equation*}
|y\bigtriangleup z\bigtriangleup w|=|y\bigtriangleup((y\cup\{b\})\setminus\{a\})\bigtriangleup((y\cup\{c\})\setminus\{a\})|=|y\cup\{b,c\}|=|y|+2,
\end{equation*}
which implies that $c(y)\neq c(y\bigtriangleup z\bigtriangleup w)$ by definition of $c$, and hence $\fs_{\leq 3}(Y)$ is not monochromatic. This shows that $A$ is \hi{3}-finite, finishing the proof.
\end{proof}

We summarize below the conclusions that we can reach, based on the previous two propositions, about implications between our finiteness classes.

\begin{corollary}\label{conclusionsfirstfrankel}
Modulo the theory $\zf$, we have that, in general,
\begin{enumerate}
\item A-finite does not imply \hi{2}-finite nor \ra{n}-finite ($n\geq 2$);
\item therefore, neither of B-finite, C-finite or E-finite imply \hi{2}-finite nor \ra{n}-finite ($n\geq 2$) either;
\item \hi{3}-finite does not imply \hi{2}-finite;
\item for no $n\in\mathbb N\setminus\{1\}$ does \hi{3}-finite imply \r-finite;
\item consequently, it is also the case that neither H-finite nor D-finite imply \r-finite whenever $n\geq 2$.
\end{enumerate}
\end{corollary}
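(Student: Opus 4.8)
The plan is to observe that the single set $A$ of atoms of the first Fraenkel model already witnesses every non-implication asserted here, so the proof reduces to bookkeeping with Propositions~\ref{firstfrankelrinfinite} and~\ref{firstfrankelh3finite}, the $\zf$-provable implications collected in Figure~\ref{fig1}, and the Jech--Sochor transfer described at the beginning of this section.

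First I would assemble the relevant finiteness properties of $A$. Since $A$ is strongly amorphous it is A-finite, and hence also B-finite, C-finite and E-finite, because in $\zf$ A-finiteness implies B-finiteness and implies C-finiteness, which in turn implies E-finiteness (see Figure~\ref{fig1}). By Proposition~\ref{firstfrankelh3finite} the set $A$ is \hi{3}-finite, and therefore, following the arrows of Figure~\ref{fig1}, it is H-finite and in particular D-finite. In the opposite direction, Proposition~\ref{firstfrankelrinfinite} gives that $A$ is \r-infinite for every $n\geq 2$; since it is in particular \ra{2}-infinite, the contrapositive of Theorem~\ref{h2finimpliesr2fin} yields that $A$ is \hi{2}-infinite.

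With these facts in hand, each clause follows at once. For (1), $A$ is A-finite but \hi{2}-infinite and \r-infinite for every $n\geq 2$. For (2), one may either note directly that $A$ is simultaneously B-finite, C-finite, E-finite, \hi{2}-infinite and \r-infinite, or argue formally that an implication from B-finiteness (respectively C-finiteness, E-finiteness) to \hi{2}-finiteness or \r-finiteness, composed with one of the $\zf$-provable implications A-finite $\Rightarrow$ B-finite, A-finite $\Rightarrow$ C-finite, C-finite $\Rightarrow$ E-finite, would contradict (1). For (3), $A$ is \hi{3}-finite but \hi{2}-infinite; for (4), $A$ is \hi{3}-finite but \r-infinite for every $n\geq 2$; and for (5), $A$ is H-finite (hence D-finite) but \r-infinite for every $n\geq 2$. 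In each case the combination of properties is decided inside a small iterated power set of $A$, so the Jech--Sochor transfer promotes the example to a genuine model of $\zf$, as explained above.

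I do not expect any genuine obstacle, as Propositions~\ref{firstfrankelrinfinite} and~\ref{firstfrankelh3finite} carry all of the mathematical content. The only points demanding care are orientational: that ``\ra{2}-infinite'' is converted into ``\hi{2}-infinite'' only through the \emph{contrapositive} of Theorem~\ref{h2finimpliesr2fin}, and that \hi{3}-finiteness propagates upward to H-finiteness and D-finiteness but not downward to \hi{2}-finiteness.
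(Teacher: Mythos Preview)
Your proposal is correct and follows exactly the approach the paper intends: the corollary is stated without explicit proof, and the text immediately preceding it already notes that $A$ is \hi{2}-infinite via the contrapositive of Theorem~\ref{h2finimpliesr2fin}, so your argument simply spells out the bookkeeping using Propositions~\ref{firstfrankelrinfinite} and~\ref{firstfrankelh3finite}, the implications of Figure~\ref{fig1}, and the Jech--Sochor transfer, just as the paper implicitly does.
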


\subsection{Variations of the second Fraenkel model}

The second Fraenkel model is constructed as follows: one starts by partitioning the set of atoms $A$ into a countable union of two-element sets, that is, we write $A=\bigcup_{n<\omega}P_n$, where the $P_n$ are disjoint and $(\forall n<\omega)(|P_n|=2)$. We let 
\begin{equation*}
G_2=\{\pi\in S_A\big|(\forall n<\omega)(\pi[P_n]=P_n)\},
\end{equation*}
in other words, $G_2$ is the subgroup of all permutations of the set of atoms that setwise fix each $P_n$, although they might or might not flip the elements of $P_n$ for each individual $n$. The second Fraenkel model is the class $\hs$ of all hereditarily symmetric sets with respect to the group $G_2$. It is well-known, as well as easy to see~\cite[Section 4.4]{jech-choice}, that the set of atoms $A$ in the second Fraenkel model is a Russell set, and therefore \r-finite for every $n\geq 2$ by Proposition~\ref{russellsetrfinite}. Russell sets are easily seen to be B-finite, and they are also E-finite~\cite[Theorem 2.2]{herrlich-howard-tachtsis}; therefore the set $A$ belongs to both of these finiteness classes. Below we explore the H-finiteness of this set.

\begin{proposition}
The set of atoms in the second Fraenkel model is H-infinite.
\end{proposition}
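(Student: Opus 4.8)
The plan is to show that $A$, the set of atoms in the second Fraenkel model, is H-infinite by exhibiting, for each hereditarily symmetric colouring $c:[A]^{<\omega}\longrightarrow 2$, an infinite pairwise disjoint family $Y\subseteq[A]^{<\omega}$ with $\fu(Y)$ monochromatic; by Theorem~\ref{equivalencefour}, an equivalent and probably cleaner route is to show directly that $[A]^{<\omega}$ is D-infinite in the model, i.e., that there is a hereditarily symmetric injective $\omega$-sequence of finite sets of atoms. I would pursue the latter. The natural candidate is the sequence $\langle P_0\cup P_1\cup\dots\cup P_{n}\mid n<\omega\rangle$, or equivalently $\langle \bigcup_{i\le n}P_i\mid n<\omega\rangle$: each term is a finite set of atoms, the terms are strictly increasing hence pairwise distinct, and the whole sequence is fixed by every $\pi\in G_2$ since each $\pi$ setwise fixes every $P_i$. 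Thus this sequence lies in $\hs$ (it even has empty support), witnessing that $[A]^{<\omega}$ is D-infinite in the second Fraenkel model.

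The key steps, in order, are: (i) recall from Theorem~\ref{equivalencefour} that $X$ is H-infinite iff $[X]^{<\omega}$ is D-infinite, so it suffices to produce a countably infinite subset of $[A]^{<\omega}$ inside $\hs$; (ii) define $s_n=\bigcup_{i\le n}P_i$ for $n<\omega$ and observe $s_n\in[A]^{<\omega}$ with $|s_n|=2n+2$, so $n\mapsto s_n$ is injective; (iii) check that the function $n\mapsto s_n$ (coded as a set of Kuratowski pairs) is hereditarily symmetric: every $\pi\in G_2$ satisfies $\pi[P_i]=P_i$ for all $i$, hence $\pi(s_n)=s_n$ for all $n$, and the pure sets $n$ are fixed as well, so $\pi$ fixes each pair $(n,s_n)$ and thus the whole function; moreover each $s_n$ is finite with support $s_n\subseteq A$ and thus hereditarily symmetric, so the transitive closure of this function is symmetric all the way down; (iv) conclude $[A]^{<\omega}$ is D-infinite in $\hs$, hence $A$ is H-infinite there. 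If one prefers to argue in the original $\fu$-language rather than via Theorem~\ref{equivalencefour}, one instead takes the pairwise disjoint family $Y=\{P_n\mid n<\omega\}$, notes it is hereditarily symmetric (with empty support), and checks that $\fu(Y)$, being the collection of all finite unions $\bigcup_{i\in F}P_i$ for $\varnothing\ne F\in[\omega]^{<\omega}$, can be two-coloured in a monochromatic way for \emph{any} given symmetric $c$ only after further thinning — which is why passing through D-finiteness of $[A]^{<\omega}$ is the more economical path.

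There is essentially no obstacle here: the proof is a one-line observation once Theorem~\ref{equivalencefour} is in hand. The only point requiring a moment's care is verifying that the witnessing object is genuinely an element of $\hs$ — i.e., hereditarily symmetric, not merely symmetric — but this is immediate because every $P_n$, every finite union of $P_n$'s, and the sequence enumerating them are all fixed pointwise by the whole group $G_2$, so they have empty support and all their elements (down to the atoms and pure sets) are symmetric too. I therefore expect the author's proof to be very short, either invoking Theorem~\ref{equivalencefour} with the sequence $\langle\bigcup_{i\le n}P_i\mid n<\omega\rangle$, or directly exhibiting that $[A]^{<\omega}$ is D-infinite in the second Fraenkel model via this sequence.
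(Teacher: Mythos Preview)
Your proposal is correct and follows essentially the same approach as the paper: exhibit an injective $\omega$-sequence of elements of $[A]^{<\omega}$ with empty support, conclude $[A]^{<\omega}$ is D-infinite in $\hs$, and invoke Theorem~\ref{equivalencefour}. The paper uses the slightly simpler witness $n\longmapsto P_n$ rather than your cumulative unions $n\longmapsto\bigcup_{i\le n}P_i$, but this is an inessential difference.
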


\begin{proof}
Note that the set $\{P_n\big|n<\omega\}$ is hereditarily symmetric with respect to $G_2$, and so is the mapping $n\longmapsto P_n$. Thus the set $\{P_n\big|n<\omega\}\subseteq [A]^{<\omega}$ is countable from the perspective of the permutation model, hence witnessing that $[A]^{<\omega}$ is D-infinite. Therefore, by Theorem~\ref{equivalencefour}, $A$ must be H-infinite.
\end{proof}

Thus, in the second Fraenkel model, the set of atoms is at the same time H-infinite (and consequently C-infinite, A-infinite, \hi{3}-infinite and \hi{2}-infinite) and \r-finite for all $n\geq 2$, E-finite, and B-finite. We summarize this conclusion below.

\begin{corollary}\label{cor-second-frankel}
Modulo the theory $\zf$, we have that, in general,
\begin{enumerate}
\item for no $n\in\mathbb N\setminus\{1\}$ does \r-finite imply H-finite;
\item therefore for no $n\in\mathbb N\setminus\{1\}$ does \r-finite imply finite;
\item D-finite does not imply H-finite;
\item E-finite does not imply H-finite;
\item B-finite does not imply H-finite;
\item consequently, it is also the case that neither of D-finite, E-finite, B-finite or \r-finite for some $n\geq 2$ imply \hi{3}-finite nor \hi{2}-finite.
\end{enumerate}
\end{corollary}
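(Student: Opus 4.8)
The plan is to extract all six items from a single witness, namely the set $A$ of atoms in the second Fraenkel model, whose relevant properties have all been recorded in the discussion immediately preceding the corollary. On the one hand, $A$ is a Russell set, hence \r-finite for every $n\geq 2$ by Proposition~\ref{russellsetrfinite}, and it is B-finite (easily) and E-finite (\cite[Theorem 2.2]{herrlich-howard-tachtsis}); being B-finite it is in particular D-finite, since every finiteness class is contained in that of the D-finite sets. On the other hand, the proposition just established shows that $A$ is H-infinite, while of course $A$ is genuinely infinite.

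The first step is to read off the ``downward'' consequences of H-infiniteness from the implication diagram in Figure~\ref{fig1}: since $\hi{2}$-finite $\Rightarrow$ $\hi{3}$-finite $\Rightarrow$ H-finite, contraposition shows that $A$ is both $\hi{3}$-infinite and $\hi{2}$-infinite (it is also C-infinite and A-infinite, but this is not needed). Item~(6) is then immediate: $A$ is simultaneously D-finite, E-finite, B-finite and \r-finite for all $n\geq 2$, while being $\hi{3}$-infinite and $\hi{2}$-infinite, so none of the former notions implies either of the latter.

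Next I would dispatch items~(1), (3), (4) and~(5) together: the same set $A$ is \r-finite (respectively, D-finite, E-finite, B-finite) and at the same time H-infinite, which is exactly the failure of the implication ``\r-finite (resp. D-, E-, B-finite) $\Rightarrow$ H-finite''. Item~(2) follows formally from~(1), since every finite set lies in every finiteness class and is therefore H-finite; hence an implication ``\r-finite $\Rightarrow$ finite'' would force ``\r-finite $\Rightarrow$ H-finite'', contradicting~(1). (Alternatively, $A$ is itself infinite and \r-finite for every $n\geq 2$, a direct counterexample.)

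Finally, to see that these are genuine $\zf$-statements and not merely facts about a permutation model, one appeals --- exactly as explained at the start of this section --- to the absoluteness of each of the finiteness notions in play for some fixed small $\wp^{\alpha}(A)$, together with the Jech--Sochor transfer theorem \cite[Theorem 6.1]{jech-choice}. There is essentially no obstacle here: the corollary is pure bookkeeping on top of the properties of $A$ already established, and the only point that needs care is keeping the directions of the implications (and of their contrapositives) straight.
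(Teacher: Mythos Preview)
Your proposal is correct and follows essentially the same approach as the paper: both arguments simply read off all six items from the single witness $A$ (the set of atoms in the second Fraenkel model), using the properties established immediately before the corollary together with the implication chain $\hi{2}$-finite $\Rightarrow$ $\hi{3}$-finite $\Rightarrow$ H-finite, and then transfer to $\zf$ via Jech--Sochor. The only difference is that you spell out a few steps (e.g.\ the derivation of (2) from (1), and that D-finiteness follows from membership in any finiteness class) that the paper leaves implicit.
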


In particular, we have now established that, for each $n\geq 2$, the notions of \r-finite and H-finite are independent (i.e. neither implies the other), as are the notions of \r-finite and \hi{3}-finite. Furthermore, the implication from \hi{2}-finite to \ra{2}-finite is not reversible. Also, for each $n\geq 2$, the notion of \r-finite is independent from each of the notions of A-finite and C-finite.

So far, none of the sets under consideration has been E-infinite, hence we now proceed to construct one such set.

\begin{definition}
Working in the second Fraenkel model, we will use the letter $\mathcal B$ to denote the set $\bigcup_{n<\omega}\mathcal B_n$, where
\begin{equation*}
\mathcal B_n=\left\{B\subseteq A\big|(\forall i\leq n)(|B\cap P_n|=1)\wedge(\forall i>n)(B\cap P_n=\varnothing)\right\}.
\end{equation*}
In other words, $\mathcal B_n$ consists of all selectors of the finite family of pairs $\{P_i\big|i\leq n\}$; consequently $\mathcal B$ consists of all finite nonempty selectors of some initial segment of the indexed family $\{P_n\big|n<\omega\}$.
\end{definition}

Notice that each individual element $B\in\mathcal B_n$ has $\bigcup_{i\leq n}P_n$ as a support, and therefore belongs to the second Fraenkel model. Trivially, each $\mathcal B_n$, as well as $\mathcal B$ admit $\varnothing$ as a support, and thus these sets belong to the second Fraenkel model as well.

\begin{proposition}
In the second Fraenkel model, the set $\mathcal B$ is E-infinite and \r-finite for all $n\in\mathbb N\setminus\{1\}$.
\end{proposition}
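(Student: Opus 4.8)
The statement has two parts: (a) $\mathcal B$ is $\mathrm{R}^n$-finite for all $n \in \mathbb N \setminus \{1\}$, and (b) $\mathcal B$ is E-infinite. For part (a), the plan is to exhibit a natural partition of $\mathcal B$ into finite pieces that admits no choice function on any infinite subfamily, and then invoke Proposition~\ref{russellsetrfinite}. The obvious partition is $\mathcal B = \bigcup_{n<\omega} \mathcal B_n$: each $\mathcal B_n$ is finite (it has exactly $2^{n+1}$ elements, being the set of selectors of $\{P_i \mid i \le n\}$), and the map $n \mapsto \mathcal B_n$ is symmetric with respect to $G_2$ since every $\pi \in G_2$ setwise fixes each $P_i$, hence permutes $\mathcal B_n$ within itself. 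The key point is that no infinite subfamily $\{\mathcal B_n \mid n \in S\}$ (for $S \subseteq \omega$ infinite) can have a choice function in $\hs$: a choice function would have a finite support $E \subseteq A$, but for $n$ large enough that $P_n \not\subseteq E$, the transposition swapping the two elements of $P_n$ fixes $E$ pointwise, hence fixes the choice function, yet it moves every element of $\mathcal B_n$ (since flipping coordinate $n$ in a selector produces a different selector) — contradicting that the choice function picks a single fixed element of $\mathcal B_n$. Wait, one must be slightly careful: it suffices that for cofinitely many $n \in S$, flipping $P_n$ moves the chosen selector, which is automatic. So Proposition~\ref{russellsetrfinite} applies and $\mathcal B$ is $\mathrm{R}^n$-finite for every $n \ge 2$.

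For part (b), I must produce a proper subset $Y \subsetneq \mathcal B$ together with a surjection $Y \twoheadrightarrow \mathcal B$, all living in $\hs$. The natural idea is to use the ``forgetting the last coordinate'' map: given $B \in \mathcal B_{n+1}$ (a selector of $\{P_i \mid i \le n+1\}$), the set $B' = B \setminus P_{n+1}$ lies in $\mathcal B_n$. This defines a map $g : \bigcup_{n \ge 1}\mathcal B_n \to \mathcal B$ (i.e. from $\mathcal B \setminus \mathcal B_0$) which is clearly symmetric (it commutes with every $\pi \in G_2$ since such $\pi$ respects the $P_i$'s and in particular $P_{n+1}$). This map is surjective onto $\mathcal B$: every $C \in \mathcal B_n$ is hit by any $B \in \mathcal B_{n+1}$ extending it, and such $B$ exist. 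And $Y := \mathcal B \setminus \mathcal B_0$ is a proper subset of $\mathcal B$ since $\mathcal B_0 = \{P_0\}$-selectors is nonempty (it has the two singletons picking an element of $P_0$). Hence $g : Y \to \mathcal B$ is a surjection from a proper subset of $\mathcal B$ onto $\mathcal B$, so $\mathcal B$ is E-infinite.

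**Main obstacle.** The only delicate point is checking that the objects constructed genuinely lie in the permutation model, i.e. are hereditarily symmetric, and — more subtly for part (a) — that the support argument correctly rules out a choice function. The support argument needs the observation that for a fixed finite $E \subseteq A$, there are infinitely many $n$ with $P_n \cap E = \varnothing$; for each such $n$ the transposition of $P_n$ fixes $E$ and therefore any $E$-supported choice function $h$, so $h(\mathcal B_n)$ must be a fixed point of this transposition inside $\mathcal B_n$ — but there are no such fixed points, since flipping the $n$-th coordinate always changes a selector of $\{P_i \mid i \le n\}$. This immediately yields a contradiction with $h$ being a function into $\mathcal B_n$, so even a single such bad index $n$ suffices; no cofiniteness subtlety is actually needed. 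Everything else (finiteness of the $\mathcal B_n$, symmetry of $n \mapsto \mathcal B_n$ and of $g$, properness of $Y$, surjectivity of $g$) is routine verification.
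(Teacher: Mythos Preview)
Your proposal is correct and follows essentially the same approach as the paper: for E-infiniteness you use the ``delete the top coordinate'' surjection $\mathcal B\setminus\mathcal B_0\twoheadrightarrow\mathcal B$, and for $\mathrm{R}^n$-finiteness you exhibit the partition $\{\mathcal B_n\mid n<\omega\}$ into finite pieces with no choice function on any infinite subfamily (via the transposition-of-$P_n$ argument), then invoke Proposition~\ref{russellsetrfinite}. The paper's proof is identical in substance, so there is nothing to add.
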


\begin{proof}
To see that $\mathcal B$ is E-infinite, we take the proper subset $\mathcal C=\{B\in\mathcal B\big||B|\geq 2\}=\bigcup_{n\geq 1}\mathcal B_n$ and observe that, for each $n<\omega$, the function $f_n:\mathcal B_{n+1}\longrightarrow\mathcal B_n$ given by $f_n(B)=B\setminus P_{n+1}$ is a surjection. Hence if we let $f=\bigcup_{n<\omega}f_n$, we will have that $f:\mathcal C\longrightarrow\mathcal B$ is a surjection witnessing that $\mathcal B$ is E-infinite.

Now, to see that $\mathcal B$ is \r-finite, we will show that it admits a partition into finite cells such that no infinite subfamily of the partition carries a choice function. The partition in question is given by $\{\mathcal B_n\big|n<\omega\}$. If $X\subseteq\omega$ was an infinite set and $g:X\longrightarrow\mathcal B$ was a choice function within the model, with some finite support $F\subseteq A$, then letting $n\in\mathbb N$ be big enough that $P_n\cap F=\varnothing$ we would have that the permutation $\pi$ transposing the pair $P_n$ would need to setwise fix $g$, while at the same time altering the value of $g(n)$, which is a contradiction. Hence, the partition is as claimed, and so $\mathcal B$ is \r-finite for all $n\geq 2$ by Proposition~\ref{russellsetrfinite}
\end{proof}

\begin{corollary}
Modulo the theory $\zf$, in general \r-finite (for any given $n\in\mathbb N\setminus\{1\}$) does not imply E-finite. Hence, for each $n\geq 2$, the notions of \r-finite and E-finite are independent.
\end{corollary}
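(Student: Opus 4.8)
The plan is to prove the two assertions of the corollary separately, and the heavy lifting has essentially already been done. The statement to prove is: modulo $\zf$, \r-finite (for any given $n \in \mathbb N \setminus \{1\}$) does not imply E-finite, and consequently for each $n \geq 2$ the notions of \r-finite and E-finite are independent.

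First I would establish the forward non-implication. The preceding proposition exhibits, in the second Fraenkel model, the set $\mathcal B$ which is simultaneously \r-finite for all $n \in \mathbb N \setminus \{1\}$ and E-infinite. By the general absoluteness discussion at the start of Section~\ref{Sect:Independence}, membership of a set in any of these finiteness classes is decided by $\wp^{11}(A)$, and so by the Jech--Sochor transfer theorem there is a model of $\zf$ containing a set that is \r-finite for all such $n$ yet E-infinite. In particular, fixing any single $n \geq 2$, this gives a model of $\zf$ in which some set is \r-finite but E-infinite; hence \r-finite does not imply E-finite. I would phrase this as: ``The set $\mathcal B$ of the preceding proposition witnesses this.''

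For the independence claim I must also produce the reverse non-implication, namely that E-finite does not imply \r-finite for $n \geq 2$. This has already been recorded: Corollary~\ref{cor-second-frankel}(2) (or the surrounding discussion) notes that in the second Fraenkel model the set of atoms $A$ is a Russell set---hence E-finite by \cite[Theorem 2.2]{herrlich-howard-tachtsis}---but H-infinite and in particular \r-infinite for all $n \geq 2$ (indeed Proposition~\ref{firstfrankelrinfinite} and the second-Fraenkel discussion give \r-infiniteness; more simply, a Russell set is \r-finite, so I should be careful here). Actually the cleanest route is: Russell sets are \r-finite by Proposition~\ref{russellsetrfinite}, so $A$ will not do for the reverse direction. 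Instead I would invoke the first Fraenkel model, where the set of atoms is A-finite---hence B-finite, C-finite, and E-finite---but \r-infinite for all $n \geq 2$ by Proposition~\ref{firstfrankelrinfinite}. Again by Jech--Sochor there is a $\zf$ model with an E-finite, \r-infinite set. Combining the two directions yields that for each $n \geq 2$, neither of \r-finite and E-finite implies the other.

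The main obstacle---really the only non-routine point---is bookkeeping: making sure I cite a set that is genuinely E-finite and \r-infinite for the reverse direction rather than accidentally reusing the Russell set (which is \r-finite). The first Fraenkel model's set of atoms is the right witness, since amorphousness gives E-finiteness while Proposition~\ref{firstfrankelrinfinite} gives \r-infiniteness. Everything else is an application of results already proved, together with the standard Jech--Sochor transfer that the section's preamble has set up once and for all.
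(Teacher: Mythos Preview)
Your proposal is correct and follows essentially the same approach as the paper: the forward non-implication comes from the set $\mathcal B$ in the preceding proposition, and the reverse non-implication (E-finite $\not\Rightarrow$ \r-finite) was already recorded in Corollary~\ref{conclusionsfirstfrankel}(2), via the first Fraenkel model's amorphous (hence E-finite) but \r-infinite set of atoms. Your mid-proof self-correction---discarding the Russell set for the reverse direction because Russell sets are themselves \r-finite---is exactly the right observation; you might tighten the write-up by simply citing Corollary~\ref{conclusionsfirstfrankel}(2) directly rather than rehearsing the detour.
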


Producing sets that are B-infinite is a subtle endeavour that we leave for the next subsection; a similar comment applies to producing sets that are infinite but \hi{2}-finite. So far we have established that the notions of \r-finite are independent from A-, C-, E-, H- and \hi{3}-finite; one could say that our picture of \r-finiteness is as complete as it could be at this moment. Unfortunately, the set $\mathcal B$ considered above is H-infinite (as witnessed by the countable sequence $\langle\mathcal B_n\big|n<\omega\rangle$ of finite subsets of $\mathcal B$), so in order to complete our picture of H-finiteness we need to consider other sets in other models. We will explore the following model, which is a variation on the idea of the second Fraenkel model.

\begin{definition}
We define the \textbf{$\omega$-Fraenkel model} by means of the following construction. Start by partitioning the set of atoms $A$ as $\bigcup_{m<\omega}A_m$, where each $A_m$ is an infinite set. Then consider the group of permutations $G_3\leq S_A$ given by
\begin{equation*}
G_3=\{\pi\in S_A\big|(\forall m<\omega)(\pi[A_m]=A_m)\},
\end{equation*}
and our model will be the class $\hs$ of sets that are hereditarily symmetric with respect to $G_3$.
\end{definition}

Thus, the $\omega$-Fraenkel model is built by following the same idea that is used in the second Fraenkel model, but with infinite sets instead of pairs.

\begin{proposition}\label{omegafrankel}
In the $\omega$-Fraenkel model, the set $A$ of atoms is C-infinite, \hi{3}-finite, and \r-infinite (for each $n\geq 2$).
\end{proposition}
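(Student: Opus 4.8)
The plan is to handle the three claimed properties of $A$ in the $\omega$-Fraenkel model separately, in increasing order of difficulty. The set $A$ is C-infinite because the map $a\mapsto m$, where $m$ is the unique index with $a\in A_m$, is a surjection $A\longrightarrow\omega$; this map is fixed by every $\pi\in G_3$ (since each $\pi$ setwise fixes every $A_m$), hence it belongs to $\hs$ with $\varnothing$ as a support, so it witnesses C-infinity inside the model. For \hi{3}-finiteness, I would reuse the colouring from Proposition~\ref{firstfrankelh3finite}, namely $c:[A]^{<\omega}\longrightarrow 2$ with $c(x)=0$ iff $|x|\equiv 0$ or $1\pmod 4$; this is symmetric with support $\varnothing$ since all elements of $G_3$ preserve cardinalities of finite sets of atoms. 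Given an infinite $Y\subseteq[A]^{<\omega}$ in $\hs$ with finite support $F$, I want to find $y\in Y$ and two atoms $b,c$ that I can swap into $y$ via transpositions fixing $F$; the only subtlety relative to the first Fraenkel model is that the transpositions swapping $a$ with $b$ (and $a$ with $c$) must lie in $G_3$, i.e. $a,b,c$ must lie in the \emph{same} block $A_m$. So I would first argue that there is some $y\in Y$ and some $m$ with $|y\cap A_m|$ large enough (at least $1$) and $|A_m\setminus(y\cup F)|\geq 2$; since $F$ is finite and each $A_m$ is infinite, and since $Y$ is infinite (so its elements cannot all avoid all blocks), such a choice exists. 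Then, exactly as before, $z=(y\setminus\{a\})\cup\{b\}$ and $w=(y\setminus\{a\})\cup\{c\}$ lie in $Y$, and $|y\bigtriangleup z\bigtriangleup w|=|y|+2$, so $\fs_{\leq 3}(Y)$ is not monochromatic.

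The real content is \r-infinity for each $n\geq 2$. Since $A$ is not amorphous here (it splits into the infinitely many infinite pieces $A_m$), I cannot invoke the earlier amorphous-set machinery; instead I expect to argue directly with supports, in the spirit of Proposition~\ref{firstfrankelrinfinite}, but now the obstacle is that a $G_3$-permutation fixing $F$ cannot move an atom from one block to another. Given a symmetric colouring $c:[A]^n\longrightarrow 2$ with finite support $F$, I would look for an infinite monochromatic set of a very structured kind: fix enough blocks, pick finitely many ``representative'' atoms outside $F$ from finitely many blocks, and use $G_3$-permutations that permute atoms \emph{within} blocks to show that the colour of an $n$-set depends only on the multiset of block-indices it meets together with which of the finitely many representatives (if any) lie in $F$. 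More concretely: choose a single block $A_m$ with $A_m\cap F=\varnothing$ (exists since $F$ is finite); then for any two $n$-sets $x,y\in[A_m\setminus F]^n=[A_m]^n$ there is $\pi\in G_3$ fixing $F$ pointwise (indeed fixing every block setwise, acting as the identity off $A_m$, and carrying $x$ to $y$ inside $A_m$), so $\pi(c)=c$ forces $c(x)=c(y)$. Hence $[A_m]^n$ is monochromatic and infinite, and $A_m\in\hs$ (support $\varnothing$), so $A_m\subseteq A$ witnesses that $A$ is \r-infinite.

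The one step that needs care, and that I expect to be the main (if modest) obstacle, is the construction of the block-internal permutation $\pi$ realizing $x\mapsto y$ inside $A_m$ while lying in $G_3$ and fixing $F$: one must check that since $A_m$ is infinite and $x,y\in[A_m]^n$ are finite, there is a bijection of $A_m$ carrying $x$ onto $y$, and that extending it by the identity on $A\setminus A_m$ gives an element of $S_A$ that preserves every $A_{m'}$ (trivially, since it is the identity off $A_m$ and a permutation of $A_m$) and fixes $F$ pointwise (since $F\cap A_m=\varnothing$). Then the transport-of-structure argument $(y,i)=\pi((x,i))\in\pi(c)=c$ goes through verbatim as in Proposition~\ref{firstfrankelrinfinite}. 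Assembling the three parts gives the proposition; I would present them as three short paragraphs, the C-infinity and \hi{3}-finiteness ones essentially by reference to the earlier arguments, and the \r-infinity one spelled out as above.
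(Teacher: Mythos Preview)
Your proposal is correct and follows essentially the same approach as the paper's proof: the surjection $a\mapsto m$ (equivalently, the sequence $m\mapsto A_m$) for C-infinity, the mod-$4$ cardinality colouring together with transpositions inside a single block $A_m$ for \hi{3}-finiteness, and the choice of a block $A_m$ disjoint from the support so that $[A_m]^n$ is monochromatic for \r-infinity. The only cosmetic difference is that the paper dispatches the \r-infinite part in one sentence, whereas you (reasonably) flag the need to check that the block-internal bijection lies in $G_3$ and fixes $F$---but this is exactly the argument the paper leaves implicit.
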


\begin{proof}
It is easy to see that the sequence mapping $m$ to $A_m$ is hereditarily symmetric (admitting $\varnothing$ as a support), which witnesses that $A$ is C-infinite. Notice that the permutations of the group $G_3$ are free to permute each of the elements within a fixed $A_m$ arbitrarily, and hence in the $\omega$-Fraenkel model each $A_m$ behaves much like the set of atoms in the first Fraenkel model. Thus, if $c:[A]^n\longrightarrow 2$ is a colouring within the model, say with support $F$, then picking an $m$ large enough that $A_m\cap F=\varnothing$ it is easy to see that $A_m$ is monochromatic for $c$, since given any two $n$-tuples $x,y\subseteq A_m$ we can always find a permutation $\pi$ such that $\pi[x]=y$ and fixing everything else; this permutation will fix $F$ and hence $(y,c(x))=(\pi(x),\pi(c(x)))=\pi(x,c(x))\in\pi(c)=c$, showing that $c(y)=c(x)$ and therefore that $A$ is \r-infinite. Similarly, if we define $c:[A]^{<\omega}\longrightarrow 2$ by $c(x)=1$ iff either $|x|\equiv 0\mod 4$ or $|x|\equiv 1\mod 4$, and $Y\subseteq A$ is an infinite set in our model, say with support $F$, find a $y\in Y$ with $y\setminus F\neq\varnothing$, let $a\in y\setminus F$ and let $m$ be such that $a\in A_m$. Picking $b,c\in A_m\setminus(F\cup y)$ and letting $\pi,\sigma$ be the permutations that transpose $a$ and $b,c$ respectively, we observe that $\pi,\sigma\in G_3$ and thus $z=\pi(y)=(y\setminus\{a\})\cup\{b\}$ and $w=\sigma(y)=(y\setminus\{a\})\cup\{c\}$ both belong to $Y$, which implies that $y\bigtriangleup z\bigtriangleup w\in\fs_{\leq 3}(Y)$ while at the same time $|y\bigtriangleup z\bigtriangleup w|=|y\cup\{b,c\}|=|y|+2$, which means that $c(y)\neq c(y\bigtriangleup z\bigtriangleup w)$ and thus $\fs_{\leq 3}(Y)$ is not monochromatic for $c$, finishing the proof that $A$ is \hi{3}-finite.
\end{proof}

\begin{corollary}
In general, modulo the theory $\zf$, \hi{3}-finite (and consequently also H-finite) does not imply C-finite nor A-finite. In particular, the implication C-finite$\Rightarrow$H-finite (and consequently also the implication A-finite$\Rightarrow$H-finite) is not reversible.
\end{corollary}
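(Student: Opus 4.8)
The plan is to observe that this corollary falls out immediately from Proposition~\ref{omegafrankel} combined with the implications already recorded in Figure~\ref{fig1}. Proposition~\ref{omegafrankel} hands us a single set doing all the work: in the $\omega$-Fraenkel model, the set $A$ of atoms is simultaneously \hi{3}-finite and C-infinite. Since \hi{3}-finite implies H-finite (one of the arrows in Figure~\ref{fig1}), this $A$ is also H-finite; and since A-finite implies C-finite (again from Figure~\ref{fig1}), the fact that $A$ is C-infinite forces $A$ to be A-infinite as well. Hence, in the $\omega$-Fraenkel model, $A$ is \hi{3}-finite (and therefore H-finite) but neither C-finite nor A-finite, which is exactly what the first sentence of the corollary asserts about permutation models.

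To upgrade this to a genuine $\zf$ statement rather than a fact about a model of $\zfa$, I would invoke the Jech--Sochor transfer theorem exactly as in the general discussion opening Section~\ref{Sect:Independence}: membership of $A$ in each of the relevant finiteness classes is decided low in the cumulative hierarchy (well within $\wp^{11}(A)$), so the existence of an \hi{3}-finite, C-infinite set transfers to an honest model of $\zf$. This completes the first sentence.

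For the second sentence, I would note that the implication C-finite $\Rightarrow$ H-finite is precisely the theorem proved earlier in Section~3 (``Every C-finite set is H-finite''), and that A-finite $\Rightarrow$ H-finite follows from it by composition with A-finite $\Rightarrow$ C-finite. The witness just produced shows neither of these can be reversed, since $A$ (in the $\omega$-Fraenkel model, hence in a model of $\zf$ via Jech--Sochor) is H-finite yet fails to be C-finite and fails to be A-finite. There is essentially no obstacle here; the only point needing a moment's care is bookkeeping on the directions of the implications --- one uses A-finite $\Rightarrow$ C-finite contrapositively to pass from C-infinite to A-infinite, and \hi{3}-finite $\Rightarrow$ H-finite directly --- together with the routine verification that the configuration is low enough in the hierarchy for the transfer theorem to apply.
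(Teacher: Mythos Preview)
Your proposal is correct and matches the paper's implicit argument: the corollary is stated immediately after Proposition~\ref{omegafrankel} with no separate proof, precisely because it follows at once from that proposition together with the implications $\mathrm{H}_3$-finite $\Rightarrow$ H-finite and A-finite $\Rightarrow$ C-finite from Figure~\ref{fig1}, plus the standard Jech--Sochor transfer already discussed at the start of Section~\ref{Sect:Independence}. You have simply spelled out what the paper leaves to the reader.
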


We now use an idea already used in the second Fraenkel model to further investigate the relation between H-finite and E-finite.

\begin{definition}
Working in the $\omega$-Fraenkel model, we define the set $\mathscr B=\bigcup_{n<\omega}\mathscr B_n$, where
\begin{equation*}
\mathscr B_n=\{B\subseteq A\big|(\forall i\leq n)(|B\cap A_n|=1)\wedge(\forall i>n)(B\cap A_n=\varnothing)\}.
\end{equation*}
\end{definition}

\begin{proposition}
In the $\omega$-Fraenkel model, the set $\mathscr B$ is E-infinite and H-finite.
\end{proposition}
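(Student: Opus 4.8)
The plan is to treat the two assertions separately. For \emph{E-infiniteness}, I would copy the argument used earlier for the set $\mathcal B$ in the second Fraenkel model. Take the proper subset $\mathscr C=\{B\in\mathscr B\mid|B|\geq 2\}=\bigcup_{n\geq 1}\mathscr B_n$, which is properly contained in $\mathscr B$ since $\mathscr B_0$ (a nonempty set of singletons) is disjoint from $\mathscr C$. For each $n<\omega$ the map $f_n:\mathscr B_{n+1}\longrightarrow\mathscr B_n$ given by $f_n(B)=B\setminus A_{n+1}$ (discard the unique element of $B$ lying in $A_{n+1}$) is a surjection, so $f=\bigcup_{n<\omega}f_n:\mathscr C\longrightarrow\mathscr B$ is a surjection. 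The only thing to verify is that $f\in\hs$: every $\pi\in G_3$ fixes each $A_m$ setwise, hence maps each $\mathscr B_{n+1}$ onto itself and commutes with the operation $B\mapsto B\setminus A_{n+1}$, so $\pi(f)=f$ and $f$ has $\varnothing$ as a support (and its elements, being pairs of finite sets of atoms, are hereditarily symmetric). Thus $\mathscr C\subsetneq\mathscr B$ surjects onto $\mathscr B$, witnessing that $\mathscr B$ is E-infinite.

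For \emph{H-finiteness} I would use Theorem~\ref{equivalencefour}: it suffices to show that $[\mathscr B]^{<\omega}$ is D-finite in the $\omega$-Fraenkel model. Suppose otherwise; then (by the $\zf$-provable equivalence between D-infiniteness and the existence of an injection from $\omega$) there is an injection $g:\omega\longrightarrow[\mathscr B]^{<\omega}$ in $\hs$, and I fix a finite support $F\subseteq A$ for $g$. Since $g$ is coded as a set of pairs whose first coordinates are pure sets, any $\pi\in G_3$ fixing $F$ pointwise satisfies $\pi(g)=g$ and therefore $\pi(g(k))=g(k)$ for every $k$; hence $F$ is a support for each value $g(k)$. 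The heart of the proof is then the counting claim: there are only finitely many $z\in[\mathscr B]^{<\omega}$ admitting $F$ as a support. To prove it, I would first show $\bigcup z\subseteq F$ for any such $z$: if some atom $b\in B\in z$ had $b\notin F$, say $b\in A_i$, then since $A_i$ is infinite and $F\cup\bigcup z$ is finite there is a ``fresh'' $b'\in A_i\setminus(F\cup\bigcup z)$; the transposition $\pi$ swapping $b$ and $b'$ lies in $G_3$ and fixes $F$ pointwise, yet $b'\in\pi(B)$ while $b'\notin\bigcup z$, so $\pi(B)\notin z$ and $\pi(z)\neq z$, a contradiction. Consequently every $B\in z$ satisfies $B\subseteq F$, so $z$ is a subset of the finite set $\{B\in\mathscr B\mid B\subseteq F\}\subseteq\wp(F)$, leaving at most $2^{2^{|F|}}$ possibilities for $z$. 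This contradicts the injectivity of $g$, so $[\mathscr B]^{<\omega}$ is D-finite, and $\mathscr B$ is H-finite by Theorem~\ref{equivalencefour}.

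The step that deserves the most care is the counting claim, and in particular the fact that it genuinely relies on each block $A_m$ being infinite: over the second Fraenkel model the set $\mathcal B$ is H-\emph{infinite} precisely because there one cannot pick a fresh element inside a pair $P_i$, and indeed the sequence $\langle\mathcal B_n\mid n<\omega\rangle$ already witnesses the D-infiniteness of $[\mathcal B]^{<\omega}$ there; so the write-up should make explicit where infiniteness of the $A_m$ enters. A minor point to get right is the reduction at the start of the H-finiteness argument — one must either cite the $\zf$ equivalence ``D-infinite $\iff$ there is an injection from $\omega$'' or argue directly that no such injection exists in $\hs$ — but this is routine.
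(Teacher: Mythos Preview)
Your proof is correct and follows essentially the same approach as the paper: the E-infiniteness argument is identical, and for H-finiteness both you and the paper reduce (via Theorem~\ref{equivalencefour}) to showing $[\mathscr B]^{<\omega}$ is D-finite and then use a transposition with a fresh atom in the relevant $A_m$ to contradict the existence of an injective $\omega$-sequence with finite support. The only cosmetic difference is that you package the key step as the counting claim ``any $z\in[\mathscr B]^{<\omega}$ supported by $F$ satisfies $\bigcup z\subseteq F$'' (and then invoke finiteness of $\wp(\wp(F))$), whereas the paper phrases it contrapositively by first asserting that injectivity forces some $F_n$ with $\bigcup F_n\not\subseteq F$ and only then applying the transposition; the underlying mechanism is the same, and your version is arguably more explicit about where the argument uses that each $A_m$ is infinite.
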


\begin{proof}
The proof that $\mathscr B$ is E-infinite is very much like the proof that $\mathcal B$ is E-infinite in the second Fraenkel model. Namely, consider the set $\mathscr C=\bigcup_{n\geq 1}\mathscr B_n=\{B\in\mathscr B\big||B|\geq 2\}$ and let $f:\mathscr C\longrightarrow\mathscr B$ be the mapping sending each $B\in\mathscr B_n$ to $B\setminus A_n$; this maps the proper subset $\mathscr C$ of $\mathscr B$ onto $\mathscr B$ and hence witnesses that $\mathscr B$ is E-infinite.

Now, to see that $\mathscr B$ is H-finite, assume the opposite. This means that there is a hereditarily symmetric countable injective sequence $\langle F_n\big|n<\omega\rangle$ of finite subsets of $\mathscr B$; if the support of that sequence is $F$, then we have that for every $\pi\in G_3$ fixing $F$ pointwise, it must be the case that $\{(n,F_n)\big|n<\omega\}=\pi\left(\{(n,F_n)\big|n<\omega\}\right)=\{\pi(n,F_n)\big|n<\omega\}=\{(\pi(n),\pi(F_n))\big|n<\omega\}=\{(n,\pi(F_n))\big|n<\omega\}$. This shows that, if $\pi\in G_3$ fixes $F$ pointwise, then it must fix also each $F_n$ individually. However, since the sequence is injective and $F$ is finite, there must be an $n<\omega$ such that $\left(\bigcup F_n\right)\not\subseteq F$; now if we let $a\in\left(\bigcup F_n\right)\setminus F$, $m<\omega$ such that $a\in A_m$, and $b\in A_m\setminus\left(\left(\bigcup F_n\right)\cup F\right)$, then the transposition $\pi\in G_3$ exchanging $a$ and $b$ will be a permutation fixing $F$ pointwise but moving $F_n$. This contradiction shows that $\mathscr B$ must be H-finite, and we are done.
\end{proof}

\begin{corollary}
Modulo the theory $\zf$, in general H-finite does not imply E-finite. Consequently, the notions of H-finite and E-finite are independent.
\end{corollary}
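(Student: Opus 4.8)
The plan is to obtain both assertions by combining the immediately preceding Proposition with the transfer apparatus described at the opening of Section~\ref{Sect:Independence}, together with a result already established earlier in this section. First I would treat the non-implication ``H-finite does not imply E-finite''. The preceding Proposition produces, inside the $\omega$-Fraenkel model, a single set $\mathscr B$ that is simultaneously H-finite and E-infinite; such a set is exactly a counterexample to the implication. As spelled out in the general discussion that opens Section~\ref{Sect:Independence}, membership of a set in any of the finiteness classes under consideration is decided at a bounded finite level $\wp^{\alpha}(\mathscr B)$ of the cumulative hierarchy over that set (with $\alpha$ well below $11$), so the existence of an H-finite, E-infinite set is already witnessed within $(\wp^{11}(A))^{\hs}$. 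Applying the Jech--Sochor transfer theorem then yields a genuine model of $\zf$ containing such a set, which gives the first half of the Corollary.

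For the full independence I would supply the reverse non-implication, which is already on record: item~(4) of Corollary~\ref{cor-second-frankel} asserts that, modulo $\zf$, E-finite does not imply H-finite (the witness there being the set of atoms of the second Fraenkel model, a Russell set, hence E-finite, yet H-infinite). Combining this with the non-implication of the previous paragraph, neither of H-finite and E-finite implies the other, which is precisely the independence claimed.

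The hard part is essentially nonexistent: all the substantive work lives in the preceding Proposition and in Corollary~\ref{cor-second-frankel}, and the present Corollary is a bookkeeping synthesis of these via the standard Fraenkel--Mostowski-to-$\zf$ transfer. The only detail deserving a moment's attention is checking that both the E-infiniteness and the H-finiteness of $\mathscr B$ are properties absolutely decided by a bounded initial segment of the hierarchy over $\mathscr B$, so that the Jech--Sochor theorem does transport the witness out of the permutation model into a model of $\zf$; but this is exactly the uniform observation made once and for all in the preamble to Section~\ref{Sect:Independence}, and it applies here without change.
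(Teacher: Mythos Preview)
Your proposal is correct and follows exactly the approach the paper intends: the corollary is stated without proof in the paper precisely because it is a direct consequence of the immediately preceding Proposition (giving an H-finite, E-infinite set in a permutation model), the Jech--Sochor transfer discussed at the start of Section~\ref{Sect:Independence}, and item~(4) of Corollary~\ref{cor-second-frankel} for the reverse non-implication. There is nothing to add.
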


We now seem to know all that there is to know regarding implications, or lack thereof, between H-finite and the other notions of finiteness under consideration (except for B-finite). We do not seem to have such a complete picture regarding the notion of \hi{3}-finite. In fact, we were not able to determine whether H-finite is equivalent to \hi{3}-finite. The following construction shows that this question might indeed be very hard. Start with a set of atoms indexed by $\omega\times\omega$, $A=\{a_{i,j}\big|i,j<\omega\}$, and consider the permutation group
\begin{equation*}
G_4=\left\{\pi\in S_A\big|(\exists\sigma,\rho\in S_\omega)(\forall i,j<\omega)\left(\pi(a_{i,j})=a_{\sigma(i),\rho(j)}\right)\right\}.
\end{equation*}

\begin{proposition}
Consider the set $A$ of atoms in the class $\hs$ of hereditarily symmetric sets with respect to the group $G_4$. This set satisfies that:
\begin{enumerate}
\item it is \hi{3}-finite (and consequently also H-finite),
\item for every colouring $c:[A]^{<\omega}\longrightarrow 2$ mapping any two sets of the same cardinality to the same colour, there exists an infinite $Y\subseteq[A]^{<\omega}$ such that $\fs_{\leq 3}(Y)$ is $c$-monochromatic.
\end{enumerate}
\end{proposition}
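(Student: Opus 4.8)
The more structural half, so I would do it first. Write the given colouring as $c=\bar c\circ|\cdot|$ for some $\bar c\colon\omega\to 2$. Step one is a purely arithmetic lemma: \emph{for every $\bar c\colon\omega\to 2$ there are $d\geq 0$ and $q\geq 1$ with $\bar c(d+q)=\bar c(2q)=\bar c(d+3q)$.} I would prove this by contradiction: if the conclusion fails for all $d,q$, then taking $d=q$ forces $\bar c(2q)\neq\bar c(4q)$ for every $q$, so with $q=1$ we get $\bar c(4)\neq\bar c(2)=:a$; the failure for $q=1$ says that no $m\geq 1$ has $\bar c(m)=\bar c(m+2)=a$, and the failure for $q=2$ says that no $m\geq 2$ has $\bar c(m)=\bar c(m+4)=1-a$. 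Setting $x_k=[\bar c(k)=1-a]$ for $k\in\{3,5,7,9,11\}$, the first conditions give the ``covering'' clauses $x_3\vee x_5$, $x_5\vee x_7$, $x_7\vee x_9$, $x_9\vee x_{11}$ and the second give the ``packing'' clauses $\neg(x_3\wedge x_7)$, $\neg(x_5\wedge x_9)$, $\neg(x_7\wedge x_{11})$; a one-line case split on the value of $x_5$ shows this is unsatisfiable. Step two exhibits the witnessing family: pin a finite set $F$ of atoms whose pointwise stabiliser fixes a set $I$ of rows and a set $J$ of columns with $|J|\geq q$ and $|I|\cdot|J|\geq d$; fix $D\subseteq I\times J$ with $|D|=d$ and $J_0\subseteq J$ with $|J_0|=q$, and put
\begin{equation*}
Y=\bigl\{\,D\cup(\{i\}\times J_0)\ \colon\ i\in\omega\setminus I\,\bigr\}.
\end{equation*}
Any stabiliser permutation fixes $D$ and $J_0$ pointwise and permutes $\omega\setminus I$, so $Y$ is hereditarily symmetric and infinite; and since the sets $\{i\}\times J_0$ ($i\notin I$) are pairwise disjoint and disjoint from $D$, a direct computation shows that $\fs_{\leq 3}(Y)$ consists precisely of the sets $D\cup(\{i\}\times J_0)$, the sets $\{i,k\}\times J_0$, and the sets $D\cup(\{i,k,m\}\times J_0)$, whose cardinalities are only $d+q$, $2q$, and $d+3q$. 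By the arithmetic lemma $c$ is constant on $\fs_{\leq 3}(Y)$, as required. The obstacle here is locating the right family: families of pairwise disjoint sets and single $G_4$-orbits of rectangles are quickly seen to be inadequate (they force monochromaticity of a dilation triple $\{n,2n,3n\}$, respectively of a long interval of overlap-cardinalities), and the point is that pairing one rigid block $D$ with the $G_4$-orbit of a single thin rectangle $\{i\}\times J_0$ produces a family with only three cardinalities among all its $\leq 3$-fold symmetric differences.

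\textbf{Part (1).} Here I would construct one hereditarily symmetric (indeed $G_4$-invariant) colouring $c\colon[A]^{<\omega}\to 2$ and show that $\fs_{\leq 3}(Y)$ is never monochromatic for an infinite hereditarily symmetric $Y$. By part (2), $c$ cannot depend on cardinality alone, so it must react to the \emph{shape} of a finite set; the available $G_4$-invariants of $B$ are the number $r(B)$ of rows met by $B$, the number $s(B)$ of columns met by $B$, whether $B$ is a combinatorial rectangle, the cardinality, and finite Boolean combinations. Given an infinite symmetric $Y$ with support pinning finite sets $I$ of rows and $J$ of columns, extract an infinite $G_4^{F}$-orbit $O\subseteq Y$ and a witness $y\in O$ using a row outside $I$ or a column outside $J$. \emph{Case A}: some free column $c$ of $y$ (or, symmetrically, some free row) contains exactly one atom of $y$; transposing $c$ with two fresh free columns produces $z,w\in Y$ with $y\bigtriangleup z\bigtriangleup w=y\cup\{a_{i,c'},a_{i,c''}\}$, hence $|y\bigtriangleup z\bigtriangleup w|=|y|+2$ (with the same row count and two more columns), and the cardinality-mod-$4$ part of $c$ flips between these two elements. \emph{Case B}: every free row and every free column of $y$ carries at least two atoms of $y$ (as for $Y$ the $G_4$-orbit of all $2\times 2$ rectangles); then a single free-column transposition gives $z\in Y$ with $y\bigtriangleup z$ an honest $m\times 2$ combinatorial rectangle, $m\geq 2$, while $y$ typically is not, and the shape-sensitive part of $c$ must do the separating.

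The genuinely hard step is building a \emph{single} colouring $c$ that survives Case B against every rigid ``dense'' family at once while not un-doing the Case-A argument. A naive ``rectangle versus non-rectangle'' bit already fails against $\{X\times Z\colon Z\in[\omega]^k\}$ with $X$ fixed and $k$ odd (whose $\leq 3$-fold symmetric differences are all nonempty rectangles, since $[\omega]^k$ has no triple with empty symmetric difference), so $c$ must also discriminate among rectangles by their two side-lengths; and when $y$ is itself a rectangle with $\geq 2$ rows, adding two atoms in one row destroys rectangularity, so the ``shape bit'' and the ``cardinality bit'' both flip in Case A and one must instead separate $y$ from the pairwise difference $y\bigtriangleup z$, which forces yet finer bookkeeping on $(r(B),s(B))$ (e.g.\ a term like $\lfloor\log_2 r(B)\rfloor+\lfloor\log_2 s(B)\rfloor\bmod 2$, in the spirit of Lemma~\ref{colourlog2}). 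Coordinating all of these into one $G_4$-invariant colouring, and checking that the Case A/Case B dichotomy is exhaustive over all infinite symmetric $Y$, is the crux of part (1).
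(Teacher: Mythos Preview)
Your Part~(2) is correct and takes essentially the same route as the paper. The paper obtains the arithmetic fact by applying Schur's theorem to $\bar c$ restricted to the even numbers (yielding a monochromatic triple $\{m,m',m+m'\}$, then setting $k=m'/2$ and $n=m-k$), whereas you prove the same fact by a short direct case analysis; and your witnessing family---a fixed block $D$ union a row-indexed family of thin rectangles $\{i\}\times J_0$---is exactly the construction the paper uses (with $D=\{a_{1,1},\ldots,a_{1,n}\}$ and $J_0=\{n+1,\ldots,n+k\}$).

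Your Part~(1), however, has a genuine gap: you never actually produce the colouring, and the Case~A/Case~B dichotomy you set up is an artefact of tracking the wrong invariant. You are trying to control $|y\bigtriangleup z\bigtriangleup w|$, which forces the ``exactly one atom in the free column'' hypothesis of Case~A and then leaves you stuck in Case~B. The paper's colouring is simply
\[
c(x)=\begin{cases}0,&d(x)\equiv 0\text{ or }1\pmod 4,\\1,&\text{otherwise},\end{cases}
\qquad d(x)=r(x)+s(x),
\]
where $r(x),s(x)$ are the numbers of rows and columns meeting $x$. Given infinite symmetric $Y$ with finite support $F$, pick $x\in Y$ using some row index $m$ (say) beyond all row indices used by $F$. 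Now transpose the \emph{row index} $m$ with two fresh row indices larger than every row index occurring in $x$: this moves the entire $m$-th slice of $x$, giving $y,z\in Y$ with $x\bigtriangleup y\bigtriangleup z$ equal to $x$ together with two disjoint copies of that slice. The point is that this adds exactly $2$ to $r(\cdot)$ and $0$ to $s(\cdot)$ \emph{regardless of how many atoms of $x$ lie in row $m$}, so $d(x\bigtriangleup y\bigtriangleup z)=d(x)+2$ and the colour flips---no case split, no shape-sensitive bit, no coordination problem. Your Case~B disappears once you replace $|x|$ by $r(x)+s(x)$; the $\lfloor\log_2 r\rfloor+\lfloor\log_2 s\rfloor$ variant you float at the end is in the right spirit but does not work (e.g.\ $r=4\mapsto r=6$ leaves $\lfloor\log_2 r\rfloor$ unchanged), whereas the plain sum modulo $4$ does.
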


\begin{proof}\hfill
\begin{enumerate}
\item Within our model, we let $d:[A]^{<\omega}\longrightarrow\omega$ be given by
\begin{equation*}
d(x)=\left|\{i<\omega\big|(\exists j<\omega)(a_{i,j}\in x\}\right|+\left|\{j<\omega\big|(\exists i<\omega)(a_{i,j}\in x\}\right|,
\end{equation*}
(so that $d(x)$ measures how many different ``rows'' and ``columns'' are intersected by $x$), and now we define $c:[A]^{<\omega}\longrightarrow 2$ by
\begin{equation*}
c(x)=\begin{cases}
0;\text{ if }d(x)\equiv 0\mod 4\text{ or }d(x)\equiv 1\mod 4, \\
1;\text{ otherwise.}
\end{cases}
\end{equation*}
Notice that, if $\pi\in G_4$ is arbitrary, then for every $x\in A^{<\omega}$ we have that $d(x)=d(\pi(x))$. This means that $d$ is (hereditarily) symmetric (supported by the empty set) and hence so is $c$. Now we let $Y$ be an arbitrary infinite (symmetric) subset of $[A]^{<\omega}$, and let $F\subseteq A$ be a finite set supporting $Y$. Let $N$ be large enough that, if $a_{i,j}\in F$, then $i,j<N$. As $Y$ is infinite, we can pick an $x\in Y$ such that there exists an $a_{m,n}\in x$ with $N<m$ or $N<n$. Suppose that $N<m$ (the case $N<n$ is treated analogously), and let $\sigma,\sigma'\in S_\omega$ be two different transpositions exchanging $m$ with some numbers $\sigma(m),\sigma'(m)$ such that, if $a_{m',n'}\in x$, then $m'<\sigma(m)<\sigma'(m)$. Now we let $\pi,\pi'\in S_A$ be the permutations given by $\pi(a_{i,j})=a_{\sigma(i),j}$ and $\pi'(a_{i,j})=a_{\sigma'(i),j}$. Since $\pi$ and $\pi'$ pointwise fix $F$, we must have that $y:=\pi(x)$ and $z:=\pi'(x)$ belong to $Y$. Therefore 
\begin{equation*}
x\cup\{a_{\sigma(m),j}\big|a_{m,j}\in x\}\cup\{a_{\sigma'(m),j}\big|a_{m,j}\in x\}=x\bigtriangleup y\bigtriangleup z\in\fs_{\leq 3}(Y).
\end{equation*}
Notice that $x\bigtriangleup y\bigtriangleup z$ intersects exactly two rows more than $x$ does, and the same number of columns. In other words, $d(x\bigtriangleup y\bigtriangleup z)=d(x)+2$, and consequently $c(x)\neq c(x\bigtriangleup y\bigtriangleup z)$. Hence $\fs_{\leq 3}(Y)$ cannot be monochromatic for $c$, and thus $A$ is not \hi{3}-finite.

\item Let $c:[A]^{<\omega}\longrightarrow 2$ be such that, whenever $x,y\in [A]^{<\omega}$ have the same cardinality, we have $c(x)=c(y)$. Consider the colouring $d:\omega\longrightarrow 2$ defined by $d(n)=c(x)$, where $x\in[A]^n$ is arbitrary. We apply Schur's theorem (i.e. the smallest finitary version of Hindman's theorem) to the colouring $d\upharpoonright\{2n\big|n<\omega\}$ to obtain a monochromatic set of even numbers of the form $\{m,m',m+m'\}$, say on colour $i$. We let $k=m'/2$ and $n=m-k$, so that the monochromatic set above can be rewritten as $\{n+k,2k,n+3k\}$. Now consider the following subset of $[A]^{<\omega}$:
\begin{equation*}
Y=\left\{\{a_{1,1},\ldots,a_{1,n}\}\cup\{a_{i,n+1},\ldots,a_{i,n+k}\}\big|1<i<\omega\right\},
\end{equation*}
which admits $\{a_{1,1},\ldots,a_{1,n},a_{1,n+1},\ldots,a_{1,n+k}\}$ as a support and is therefore symmetric. Notice that, whenever $x,y,z\in Y$ are distinct, we have that $|x|=n+k$, $|x\bigtriangleup y|=2k$ and $|x\bigtriangleup y\bigtriangleup z|=n+3k$; this implies that $c(x)=d(n+k)=d(2k)=c(x\bigtriangleup y)=d(n+3k)=c(x\bigtriangleup y\bigtriangleup z)=i$. In other words, $\fs_{\leq 3}(Y)$ is monochromatic for $c$ in colour $i$.
\end{enumerate}
\end{proof}

Thus, although the set $A$ of atoms in this model is not \hi{3}-infinite, it is ``almost'' \hi{3}-infinite in the sense that it produces infinite monochromatic sets of the form $\fs_{\leq3}(Y)$ whenever the colouring in question is defined solely in terms of cardinality. This fact deserves highlighting, which we do below.

\begin{corollary}
It is consistent with $\zf$ that there exists an H-finite set satisfying that, for every colouring $c:[X]^{<\omega}\longrightarrow 2$ for which there exists $g:\omega\longrightarrow 2$ making the following diagram commutative

\centerline{\xymatrix{
[X]^{<\omega} \ar@{->}[r]^{|\cdot|} \ar@{->}[dr]_c & \omega \ar@{->}[d]^g \\
 & 2,
}}
one can find an infinite $Y\subseteq[X]^{<\omega}$ such that $\fs_{\leq 3}(Y)$ is monochromatic.
\end{corollary}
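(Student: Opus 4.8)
The plan is to read off the corollary directly from the preceding proposition, with the Fraenkel--Mostowski model being turned into a genuine model of $\zf$ via the transfer machinery set up at the beginning of Section~\ref{Sect:Independence}. Concretely, I would take $X$ to be the set $A$ of atoms in the class $\hs$ of hereditarily symmetric sets with respect to the group $G_4$ acting on $A=\{a_{i,j}\mid i,j<\omega\}$. Part~(1) of the preceding proposition says that this $A$ is \hi{3}-finite, and hence (via the implications already established in Section~3 and recorded in Figure~\ref{fig1}) H-finite. Part~(2) of that same proposition says precisely that for every colouring $c:[A]^{<\omega}\longrightarrow 2$ which factors through the cardinality map $|\cdot|$ there is an infinite $Y\subseteq[A]^{<\omega}$ with $\fs_{\leq 3}(Y)$ monochromatic, i.e.\ that $A$ already has the property described in the commutative diagram. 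Thus, inside $\hs$, the set $A$ witnesses the corollary.

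The only remaining point is that $\hs$ is a model of $\zfa$ rather than of $\zf$, and this is dealt with by the Jech--Sochor transfer theorem exactly as in all the other independence results of this section. Both relevant assertions about $X$ --- being H-finite, and having the factoring-through-cardinality property --- are decided inside $\wp^\alpha(X)$ for a small finite $\alpha$: any colouring $c:[X]^{<\omega}\longrightarrow 2$ lies in $\wp^5(X)$, the auxiliary $g:\omega\longrightarrow 2$ lives a bounded number of power-set levels above $X$, the witnessing family $Y$ and the set $\fs_{\leq 3}(Y)$ lie in $\wp^2(X)$, and ``$Y$ is infinite'' is absolute via Tarski's formulation of finiteness. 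Hence the existence of such an $X$ is reflected by the structure $(\wp^{11}(A))^{\hs}$, and the Jech--Sochor theorem produces a transitive model $\mathbf W$ of $\zf$ containing a set $B$ with $(\wp^{11}(B))^{\mathbf W}\cong(\wp^{11}(A))^{\hs}$; this $B$ then witnesses the corollary in a genuine model of $\zf$.

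I do not anticipate a genuine obstacle here, since the corollary is essentially a restatement-for-emphasis of the proposition just proved, packaged through the standard transfer argument already invoked repeatedly in this section. If one wished to be fully careful, the single thing to verify is that the ordinal $\alpha$ needed to decide property~(2) is indeed finite and small enough to fit under the uniform bound $\wp^{11}$ used throughout --- but this is clear from the complexity count in the previous paragraph.
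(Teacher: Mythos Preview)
Your proposal is correct and matches the paper's approach exactly: the corollary is stated in the paper without a separate proof, being an immediate repackaging of the preceding proposition (parts (1) and (2)) together with the standard Jech--Sochor transfer already invoked throughout Section~\ref{Sect:Independence}. Your explication of why \hi{3}-finite implies H-finite and why the factoring-through-cardinality property is boundable fills in precisely the details the paper leaves implicit.
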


This fact essentially prevents us from proving that an arbitrary H-finite set $X$ needs to be \hi{3}-finite, since any such proof would presumably require us to define a ``bad'' colouring for $\fs_{\leq 3}$, but, in the absence of any assumptions about the structure on the arbitrary set $X$, it is hard to imagine how could one try to define this colouring, other than in terms of cardinality. Hence, we believe that the corollary above strongly suggests that one should try to find a model of $\zf$ with an H-finite but \hi{3}-infinite set, rather than trying to prove that H-finite implies \hi{3}-finite. The authors of this paper did not succeed in either of these two endeavours.

\subsection{Models with an ultrahomogeneous set of atoms}

Recall that a structure is said to be {\it ultrahomogeneous} if every isomorphism between two finite substructures can be extended to an automorphism of the whole structure. An idea that has proven to be very fruitful in the realm of permutation models is that of endowing the set of atoms with some sort of ultrahomogeneous structure and then taking the Fraenkel--Mostowski model with respect to the automorphism group of this structure.

A classical example of this approach is the model known as {\it Mostowski's linearly ordered model}. To construct this model, one first endows the set $A$ of atoms with a linear order $\leq$ in such a way that $(A,\leq)\cong(\mathbb Q,\leq_{\mathbb Q})$, where $\leq_{\mathbb Q}$ is the usual order on the rational numbers. Then we let $G_5$ be the automorphism group of $(A,\leq)$, and Mostowski's linearly ordered model is just the class $\hs$ of hereditarily symmetric sets with respect to the group $G_5$. This is a model that satisfies the linear ordering principle, stating that every set admits a linear order; we henceforth denote this principle by $\lo$. So this is a model in which every infinite set must be B-infinite, and so it is a model in which potentially we can find that some of our finiteness notions do not imply B-finiteness. A part of this can be seen in the following proposition.

\begin{proposition}\label{mostowskih3finite}
The set $A$ of atoms in Mostowski's linearly ordered model is \hi{3}-finite.
\end{proposition}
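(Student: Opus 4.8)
The plan is to follow the template of the proofs of Propositions~\ref{firstfrankelh3finite} and~\ref{omegafrankel}, using once more the cardinality-based colouring $c\colon[A]^{<\omega}\longrightarrow 2$ defined by $c(x)=0$ if $|x|\equiv 0$ or $1\pmod 4$, and $c(x)=1$ otherwise. Since every $\pi\in G_5\leq S_A$ preserves the cardinality of each finite set of atoms, the colouring $c$ is hereditarily symmetric, being supported by $\varnothing$. Given an arbitrary infinite, hereditarily symmetric $Y\subseteq[A]^{<\omega}$ with finite support $F\subseteq A$, the goal is to produce three distinct $y,z,w\in Y$ such that $y\bigtriangleup z\bigtriangleup w=y\cup\{q,r\}$ for suitable atoms $q,r\notin y$ with $q\neq r$; then $|y\bigtriangleup z\bigtriangleup w|=|y|+2$, so $y$ and $y\bigtriangleup z\bigtriangleup w$ receive different colours under $c$, and since both lie in $\fs_{\leq 3}(Y)$, this set fails to be monochromatic, which is what is required.

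The only point at which the argument departs from the earlier propositions is that the permutations in $G_5$ must respect the order, so one cannot transpose atoms at will. Instead, one uses that the finite set $F$ partitions $A$ into finitely many convex blocks (the connected components of $A\setminus F$, each an open interval or a ray), every one of which is a countable dense linear order without endpoints and hence order-isomorphic to $(\mathbb Q,\leq_{\mathbb Q})$; moreover, any permutation of $A$ obtained by gluing together order-automorphisms of these blocks and the identity on $F$ lies in $G_5$ and fixes $F$ pointwise. As $[F]^{<\omega}$ is finite while $Y$ is infinite, we may pick $y\in Y$ with $y\not\subseteq F$, and then an atom $p\in y$ lying in one of the blocks, say $C$. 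Writing $y\cap C=\{p_1<\cdots<p_k\}$ with $p=p_t$, we choose two distinct atoms $q,r$ of $C$ lying outside the finite set $y$ and strictly between the predecessor and successor of $p$ in $y\cap C$ (taking the bound to be $-\infty$ or $+\infty$ within $C$ when $t=1$ or $t=k$). By the ultrahomogeneity of $(\mathbb Q,\leq_{\mathbb Q})$, the order-preserving partial map fixing $\{p_s\mid s\neq t\}$ pointwise and sending $p_t$ to $q$ (respectively to $r$) extends to an automorphism of $C$; extending by the identity outside $C$ yields $\pi,\pi'\in G_5$ fixing $F$ pointwise. Hence $z:=\pi(y)=(y\setminus\{p\})\cup\{q\}$ and $w:=\pi'(y)=(y\setminus\{p\})\cup\{r\}$ belong to $Y$, these three sets are pairwise distinct, and a direct computation gives $y\bigtriangleup z\bigtriangleup w=y\cup\{q,r\}$, completing the proof as outlined above.

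The main thing to be careful about is precisely this substitution step: one must verify that ``moving $p$ to a new location within its block'' can be realised by an order-automorphism, which is exactly what forces $q$ and $r$ to be chosen in the open interval of $C$ delimited by the two neighbours of $p$ inside $y\cap C$ --- a region which, being an infinite dense suborder, comfortably accommodates two distinct choices avoiding the finite set $y$. Once this is granted, checking that $y\bigtriangleup z\bigtriangleup w=y\cup\{q,r\}$ and that $c$ separates the cardinalities $|y|$ and $|y|+2$ is the same routine bookkeeping that appears in the proofs of Propositions~\ref{firstfrankelh3finite} and~\ref{omegafrankel}.
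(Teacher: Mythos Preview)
Your proof is correct and follows essentially the same approach as the paper: both use the cardinality-based colouring $c(x)$ depending on $|x|\bmod 4$, pick $y\in Y$ with an element $p\in y\setminus F$, and use the ultrahomogeneity of the dense linear order to find order-automorphisms fixing $F\cup(y\setminus\{p\})$ while sending $p$ to two new atoms $q,r$, yielding $y\bigtriangleup z\bigtriangleup w$ of cardinality $|y|+2$. The paper phrases the choice of $q,r$ as picking them in an open interval $I$ around $p$ disjoint from $F\cup(y\setminus\{p\})$, which is exactly your ``between the predecessor and successor of $p$ in $y\cap C$'' condition; your version is arguably more explicit about why the relevant partial map is order-preserving.
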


\begin{proof}
The proof is essentially the same as that of Proposition~\ref{firstfrankelh3finite} or~\ref{omegafrankel}: working in our model, we define the colouring $c:[A]^{<\omega}\longrightarrow 2$ by
\begin{equation*}
c(x)=\begin{cases}
0;\text{ if }|x|\equiv0\mod 4\text{ or }|x|\equiv 1\mod 4, \\
1;\text{ otherwise,}
\end{cases}
\end{equation*}
which can be done as $c$ has $\varnothing$ as a support. We now take an arbitrary infinite hereditarily symmetric set $Y\subseteq[A]^{<\omega}$ admitting a finite support $F\subseteq A$. Find a $y\in Y$ such that $y\not\subseteq F$, fix an atom $a\in y\setminus F$ and let $I$ be an interval in $(A,\leq)$ containing $a$ but not intersecting $F\cup y\setminus\{a\}$ (this can be done because the latter is a finite set). Now pick two distinct atoms $b,c\in I$, and note that, by ultrahomogeneity of the structure $(A,\leq)$, one can find automorphisms $\pi,\sigma$ of $(A,\leq)$ fixing $F\cup y\setminus\{a\}$ and such that $\pi(a)=b$ and $\sigma(a)=c$. Our assumption about $\pi$ and $\sigma$ fixing all elements of $F$ pointwise implies that $\pi(Y)=Y$ and $\sigma(Y)=Y$, which in turn implies that
\begin{equation*}
z:=(y\setminus\{a\})\cup\{b\}=\pi(y)\in\pi(Y)=Y
\end{equation*}
and 
\begin{equation*}
w:=(y\setminus\{a\})\cup\{c\}=\sigma(y)\in\sigma(Y)=Y.
\end{equation*}
Hence $y\bigtriangleup z\bigtriangleup w\in\fs_{\leq 3}(Y)$, while at the same time we have that $|y\bigtriangleup z\bigtriangleup w|=|y|+2$, and therefore $c(y)\neq c(y\bigtriangleup z\bigtriangleup w)$. So $\fs_{\leq 3}(Y)$ is not monochromatic, which shows that $c$ is a bad colouring, and therefore $A$ is \hi{3}-finite.
\end{proof}

Since the set $A$ of atoms in Mostowski's linearly ordered model is B-infinite, the previous proposition allows us to see that \hi{3}-finite does not imply B-finite.

\begin{corollary}
Modulo the theory $\zf$, in general \hi{3}­-finite does not imply B-finite (and consequently, H-finite does not imply B-finite either). Thus, by Corollary~\ref{cor-second-frankel}, the notion of B-finiteness is independent from the notions of H-finiteness and \hi{3}-finiteness.
\end{corollary}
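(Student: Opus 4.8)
The plan is to obtain the non-implication directly from Proposition~\ref{mostowskih3finite} together with the fact that the set of atoms in Mostowski's linearly ordered model is B-infinite, and then to combine this one-sided failure with Corollary~\ref{cor-second-frankel} to read off the full independence statements.

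First I would recall that, by construction, Mostowski's linearly ordered model carries a hereditarily symmetric linear order $\leq$ on the set $A$ of atoms, with $(A,\leq)\cong(\mathbb Q,\leq_{\mathbb Q})$; every element of $G_5$ fixes this order, so it lies in $\hs$. In particular, inside the model, $A$ is an infinite set that is a linearly orderable subset of itself, so by Definition~\ref{def-finiteness} the set $A$ is B-infinite. On the other hand, Proposition~\ref{mostowskih3finite} says that $A$ is \hi{3}-finite in this model, and since \hi{3}-finite implies H-finite (Figure~\ref{fig1}), the set $A$ is H-finite as well. As explained at the start of Section~\ref{Sect:Independence}, membership in each of these finiteness classes is decided by $\wp^{11}(A)$, so the Jech--Sochor transfer theorem converts this Fraenkel--Mostowski example into a genuine model of $\zf$ containing a set that is simultaneously \hi{3}-finite (hence H-finite) and B-infinite. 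This already shows that, modulo $\zf$, neither \hi{3}-finite nor H-finite implies B-finite.

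For the independence assertions I would then invoke Corollary~\ref{cor-second-frankel}: its clauses (5) and (6) record that, in general (modulo $\zf$), B-finite implies neither H-finite nor \hi{3}-finite, the witness there being the set of atoms in the second Fraenkel model, which is B-finite but H-infinite. Combining the two directions yields that B-finiteness is independent from H-finiteness and from \hi{3}-finiteness. The only point that needs a word of care is the essentially definitional observation that a set which is itself infinite and linearly orderable must be B-infinite; this is what lets the linear-ordering principle satisfied in Mostowski's model force $A$ to be B-infinite, and beyond it there is no real obstacle in the argument.
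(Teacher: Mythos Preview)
Your argument is correct and follows essentially the same route as the paper: you use Proposition~\ref{mostowskih3finite} together with the B-infiniteness of the set of atoms in Mostowski's linearly ordered model (and an implicit Jech--Sochor transfer) to block one direction, and then invoke Corollary~\ref{cor-second-frankel} for the other. The paper's treatment is slightly more terse, but the substance is the same.
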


Unfortunately, the set $A$ in this model is \r-infinite for every $n\geq 2$ (this is easily seen with a proof very similar to that of Proposition~\ref{firstfrankelrinfinite}), and so considering this set will not allow us to get any new information about implications between B-finiteness and \r-finiteness, or \hi{2}-finiteness. So we now need to turn our attention to a different family of models.

The models that we will now consider also help address the problem of separating the various notions of \r-finiteness, as $n$ varies. We know that, under specific assumptions on the set $X$, \r-finiteness of $X$ implies \ra{m}-finiteness of $X$ for $m>n$. We were not able to determine if any of these specific assumptions is really necessary for these implications, but we were able to determine that the reverse implications are not provable in $\zf$. To see this, we need to recall the structure of the random graph, sometimes also known as the Rado graph. This is a countable graph $G$ characterized by the fact that for every two disjoint finite sets of vertices $E,F\subseteq G$, there exists a vertex $x\in G$ which is adjacent to every element of $E$ and non-adjacent to every element of $F$. For every $n\geq 2$, we also have the Rado $n$-hypergraph: this is an $n$-hypergraph with set of vertices $G$ (that is, the set of edges is a subset of $[G]^n$) with the property that for every two disjoint finite $E,F\subseteq[X]^{n-1}$, one can find a vertex $x\in G$ such that for every $y\in E$ the set $\{x\}\cup y$ is a vertex, and for every $z\in F$ the set $\{x\}\cup z$ is not a vertex. For a reference where these graphs are studied in some detail, see~\cite[Definition 2.2, Proposition 2.1]{pelayo-ideals} (our Rado $n$-hypergraph is what that author calls ``the $n$-Random graph with two colours''). These hypergraphs are ultrahomogeneous, in the sense that whenever $E,F\subseteq G$ are two finite sets of vertices with an isomorphism between their induced subgraphs, there exists an automorphism of $G$ extending that isomorphism. We use these graphs to define permutation models.

\begin{definition}\label{radomodel}
Start working in a model of $\zfa$ with a countable set of atoms $A$. Let $n\in\mathbb N\setminus\{1\}$. Partition $A$ as a countable union $\bigcup_{m<\omega}A_m$ of countable sets, and equip each of the $A_m$ with the structure of a Rado $n$-hypergraph.
\begin{enumerate}
\item We let $H_n\leq S_A$ be the group of permutations given by
\begin{equation*}
H_n=\{\pi\in S_A\big|(\forall m<\omega)(\pi\upharpoonright A_m\text{ is an automorphism of }A_m)\},
\end{equation*}
(automorphism here means with respect to the structure of Rado $n$-hypergraph).
\item The \textbf{$n$-Rado model} is the class $\hs$ of all hereditarily finite sets with respect to the group $H_n$.
\item The $2$-Rado model will simply be known as the \textbf{Rado model}.
\end{enumerate}
\end{definition}

We begin with a simple proof in the particular case $n=2$, which will set the stage for later proofs with larger $n$.

\begin{proposition}\label{2radoatomsareh2finite}
In the Rado model, the set $A$ of atoms is \hi{2}-finite.
\end{proposition}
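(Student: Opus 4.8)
The plan is to exhibit a single symmetric colouring of $[A]^{<\omega}$ that no infinite symmetric $Y$ can defeat, using the edge-parity of the random-graph structure carried by the blocks $A_m$. Concretely, I would set $c\colon[A]^{<\omega}\longrightarrow 2$ to be the number of edges that lie inside $x$, reduced modulo $2$; since the only edges are those within a single block, this is $c(x)\equiv\sum_{m<\omega}\bigl|\{e\in[x\cap A_m]^2\colon e\text{ is an edge of }A_m\}\bigr|\pmod 2$. Every $\pi\in H_2$ fixes each block setwise and restricts there to a graph automorphism, hence preserves this count, so $c$ is hereditarily symmetric with $\varnothing$ as a support, and it is total on $[A]^{<\omega}$. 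It then remains to check that $\fs_{\leq 2}(Y)$ is non-monochromatic for $c$ for every infinite symmetric $Y\subseteq[A]^{<\omega}$.

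So fix such a $Y$ and a finite support $E\subseteq A$ for it, and suppose toward a contradiction that $\fs_{\leq 2}(Y)$ is monochromatic in some colour $i$. Since $E$ is finite and $Y$ is infinite, I can pick $y\in Y$ with $y\not\subseteq E$, an atom $a\in y\setminus E$, and the index $m$ with $a\in A_m$. Put $S=(E\cup(y\setminus\{a\}))\cap A_m$, a finite subset of $A_m$ with $a\notin S$. Using the extension property of the random graph $A_m$ (iterated, to get infinitely many witnesses), I would locate $b,b'\in A_m\setminus(E\cup y)$, both having exactly the same neighbours inside $S$ as $a$ does, but with $b\sim a$ and $b'\not\sim a$; the two requirements are compatible because "same pattern over $S$" together with "(non)adjacency to $a$" simply prescribes an adjacency pattern over the finite set $S\cup\{a\}$.

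Next I would turn these into elements of $\fs_{\leq 2}(Y)$. By ultrahomogeneity of $A_m$, the partial isomorphisms $\mathrm{id}_S\cup\{a\mapsto b\}$ and $\mathrm{id}_S\cup\{a\mapsto b'\}$ extend to automorphisms of $A_m$; extending these by the identity on all other blocks yields $\pi,\pi'\in H_2$ that fix $E$ pointwise, so $\pi(Y)=Y=\pi'(Y)$. Because $\pi$ is the identity off $A_m$ and fixes $S\supseteq(y\setminus\{a\})\cap A_m$ while sending $a$ to $b\notin y$, we get $z:=\pi(y)=(y\setminus\{a\})\cup\{b\}\in Y$, so $y\bigtriangleup z=\{a,b\}\in\fs_{\leq 2}(Y)$, and likewise $\{a,b'\}=y\bigtriangleup\pi'(y)\in\fs_{\leq 2}(Y)$. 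But $c(\{a,b\})=1$ since $a\sim b$, whereas $c(\{a,b'\})=0$ since $a\not\sim b'$, contradicting monochromaticity. Hence $A$ is \hi{2}-finite.

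The one step carrying any content is the choice of $b$ and $b'$: it is precisely the genericity of the random graph that lets us find vertices imitating $a$ over the frozen finite set $S$ — which keeps the associated permutation inside $H_2$ and fixing $E$, hence keeps $z$ inside $Y$ — while differing from $a$ exactly on the one bit, the edge $\{a,\cdot\}$, that $c$ can detect. Everything else is routine permutation-model bookkeeping (symmetry of $c$, that $\pi\in H_2$ fixes $E$, the explicit form of $\pi(y)$), and the restriction to two summands is used only insofar as it forces $\fs_{\leq 2}(Y)$ to contain the bare pair $\{a,b\}$, whose colour we can dictate at will.
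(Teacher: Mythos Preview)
Your proof is correct and follows essentially the same route as the paper: pick $y\in Y$ not contained in the support, single out an atom $a\in y$ outside the support, use the extension property of the random graph on its block to manufacture two ``clones'' $b,b'$ of $a$ over the relevant finite set that differ only in their adjacency to $a$, invoke ultrahomogeneity to push these into $Y$, and read off $\{a,b\},\{a,b'\}\in\fs_{\leq 2}(Y)$ with opposite colours. The only difference is cosmetic: the paper's colouring sets $c(x)=1$ iff $x$ lies in a single block and contains at least one edge, whereas you use edge-parity; on two-element subsets of a block these agree, so the endgame is identical.
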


\begin{proof}
In the Rado model, we define the colouring $c:[A]^{<\omega}\longrightarrow 2$ by $c(x)=1$ iff there is an $m<\omega$ such that $x\subseteq A_m$ and there are $a,b\in x$ such that $a$ and $b$ are adjacent (according to the structure of the Rado graph that $A_m$ carries). We claim that this colouring witnesses the \hi{2}-finiteness of $A$. To see this, let $Y\subseteq[A]^{<\omega}$ be an arbitrary (hereditarily symmetric) infinite set, and suppose that the finite set $F\subseteq A$ is a support for $Y$. Find a $y\in Y$ such that $y\not\subseteq F$ and pick an $a\in y\setminus F$. Let $m<\omega$ be such that $a\in A_m$ and define $F'=(F\cup y)\cap A_m$. Partition $F'=F_0\cup F_1$, where $F_0$ consists of those atoms in $F'$ that are not adjacent to $a$, and $F_1$ consists of those that are adjacent to $a$. Use the defining property of the Rado graph for the disjoint sets $F_0$ and $F_1\cup\{a\}$ to find an atom $b\in A_m$ which is adjacent to all elements of $F_1$, as well as to $a$, and not adjacent to any element of $F_0$; similarly use the same property applied to the sets $F_0\cup\{a\}$ and $F_1$ to find an atom $c\in A_m$ which is adjacent to all elements of $F_1$ and not adjacent to any element of $F_0$ nor to $a$. Note that our choice of $b$ and $c$ ensure that the subgraph of $A_m$ induced by the set of vertices $F'\cup\{a\}$ is isomorphic to that induced by the set of vertices $F'\cup\{b\}$ via an isomorphism fixing $F'$; by ultrahomogeneity this implies that there is an automorphism of $A_n$ extending this isomorphism and consequently there exists a $\pi\in H_2$ such that $\pi$ is the identity on all $A_k$, for $k\neq m$, and $\pi\upharpoonright A_m$ is an automorphism fixing $F'$ pointwise and mapping $a$ to $b$. With an entirely analogous argument we obtain an element $\sigma\in H_2$ which is the identity on $A_k$ for $k\neq m$ and such that $\sigma\upharpoonright A_m$ is an automorphism fixing $F'$ pointwise and mapping $a$ to $c$. Hence both $\pi$ and $\sigma$ fix $F$ pointwise and thus they fix $Y$ setwise, which implies that $Y$ contains both $z=\pi(y)=(y\setminus\{a\})\cup\{b\}$ and $w=\sigma(y)=(y\setminus\{a\})\cup\{c\}$. Therefore both $\{a,b\}=y\bigtriangleup z$ and $\{a,c\}=y\bigtriangleup w$ belong to $\fs_{\leq 2}(Y)$, but notice that (since $b$ is adjacent to $a$ but $c$ is not) we have by construction that $c(\{a,b\})=1\neq 0=c(\{a,c\})$. This shows that $\fs_{\leq 2}(Y)$ cannot be monochromatic, and hence $A$ is \hi{2}-finite, which finishes the proof.
\end{proof}

This is the first example that we have exhibited of an infinite \hi{2}-finite set.

\begin{corollary}
Modulo the theory $\zf$, \hi{2}-finite does not imply finite.
\end{corollary}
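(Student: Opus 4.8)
The plan is to read this off immediately from Proposition~\ref{2radoatomsareh2finite} together with the transfer machinery set up at the beginning of Section~\ref{Sect:Independence}. The only content is to observe that the witnessing set in the Rado model is genuinely infinite, and then to invoke Jech--Sochor.

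First I would note that in the construction of the Rado model (Definition~\ref{radomodel} with $n=2$), the set $A$ of atoms is partitioned as a countable union $\bigcup_{m<\omega}A_m$ of countable sets, so $A$ itself is countably infinite in the ground model; since being finite is absolute (indeed decidable within $\wp^3(A)$, as remarked in the preamble to this section), $A$ remains infinite in the class $\hs$ of hereditarily symmetric sets. Next I would simply cite Proposition~\ref{2radoatomsareh2finite}, which establishes that this same set $A$ is \hi{2}-finite in the Rado model. Thus the Rado model contains an infinite \hi{2}-finite set, and in fact (as explained in the general discussion of the template for independence proofs) the structure $\wp^{11}(A)$ already satisfies the statement ``there exists an infinite \hi{2}-finite set''.

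Finally I would invoke the Jech--Sochor transfer theorem, exactly as outlined before: since the existence of an infinite \hi{2}-finite set is witnessed within $(\wp^\alpha(A))^{\hs}$ for a small finite $\alpha$, there is a transitive model $\mathbf W$ of $\zf$ containing such a set. This completes the proof. There is no real obstacle here: all the work is already done in Proposition~\ref{2radoatomsareh2finite}, and the present corollary is just the explicit extraction of the independence consequence, namely that \hi{2}-finiteness, unlike \ra{1}-finiteness, is a strictly weaker notion than finiteness.

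\begin{proof}
In the Rado model, the set $A$ of atoms is infinite (it is a countable union of countable sets, and finiteness is absolute) and, by Proposition~\ref{2radoatomsareh2finite}, it is \hi{2}-finite. Hence the Rado model contains an infinite \hi{2}-finite set; by the Jech--Sochor transfer theorem, as explained at the beginning of Section~\ref{Sect:Independence}, there is a model of $\zf$ containing such a set.
\end{proof}
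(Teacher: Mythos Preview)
Your proposal is correct and matches the paper's approach exactly: the corollary is stated in the paper without proof, as an immediate consequence of Proposition~\ref{2radoatomsareh2finite} together with the Jech--Sochor transfer machinery described at the beginning of Section~\ref{Sect:Independence}. Your explicit justification that $A$ is infinite in $\hs$ and your invocation of the transfer theorem simply spell out what the paper leaves implicit.
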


Thus, in the Rado model, the set of atoms is also \ra{2}-finite, by Theorem~\ref{h2finimpliesr2fin}. The next proposition shows that something more general is true in all of the $n$-Rado models.

\begin{proposition}\label{nradomodel}
Let $n\geq 2$. In the $n$-Rado model, the set $A$ of atoms is \ra{k}-finite for every $k\geq n$. Moreover, if $n\geq 3$, then the set $A$ is \ra{k}-infinite for all $k<n$.
\end{proposition}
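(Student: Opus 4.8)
The plan is to treat the two assertions separately, and in both cases to exploit the decisive structural fact that a finite support meets only finitely many of the pieces $A_m$, so that on every other piece the relevant object is invariant under the entire automorphism group of the corresponding Rado $n$-hypergraph.

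For the first assertion I would fix $k\geq n$ and use the colouring $c\colon[A]^k\longrightarrow 2$ defined by setting $c(x)=1$ precisely when there is an $m<\omega$ with $x\subseteq A_m$ and some $n$-element subset of $x$ is an edge of the Rado $n$-hypergraph carried by $A_m$, and $c(x)=0$ otherwise; since every $\pi\in H_n$ fixes each $A_m$ setwise and restricts there to a hypergraph automorphism, $c$ is hereditarily symmetric with empty support. To see that $c$ witnesses \ra{k}-finiteness of $A$, I would first establish the key claim that every infinite $Y\subseteq A$ lying in $\hs$ contains an infinite subset $R$ which is a copy of the Rado $n$-hypergraph sitting inside a single $A_m$. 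Granting the claim, the argument finishes quickly: inside a copy of the Rado $n$-hypergraph the extension property lets one greedily build an independent $k$-set (each newly chosen vertex forming no edge together with any $(n-1)$-element subset of the vertices already chosen), on which $c$ takes value $0$, and it likewise produces a $k$-element set containing an edge, on which $c$ takes value $1$; hence $c\upharpoonright[R]^k$, and therefore $c\upharpoonright[Y]^k$, is non-constant.

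For the key claim, let $F$ be a finite support for $Y$ and let $M_0$ be the finite set of indices $m$ with $A_m\cap F\neq\varnothing$. If $m\notin M_0$, then every automorphism of $A_m$, extended by the identity on the remaining pieces, lies in $H_n$ and fixes $F$ pointwise, hence fixes $Y$ setwise; as the Rado $n$-hypergraph is vertex-transitive this forces $Y\cap A_m\in\{\varnothing,A_m\}$. If $Y\cap A_m=A_m$ for some such $m$, take $R=A_m$. Otherwise $Y\subseteq\bigcup_{m\in M_0}A_m$, so $Y\cap A_m$ is infinite for some $m\in M_0$; writing $S=Y\cap A_m$, the same reasoning shows that $S$ is invariant under every automorphism of $A_m$ fixing the finite set $F\cap A_m$ pointwise. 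Over a fixed finite set there are only finitely many isomorphism types of a single vertex, so some type $t$ is attained by infinitely many members of $S$; let $R$ be the set of elements of $S$ of that type $t$. Using the extension property of the Rado $n$-hypergraph one realises, over $F\cap A_m$, a vertex of type $t$ meeting any prescribed finite list of adjacency requirements with respect to finitely many $(n-1)$-element subsets of $R$, and ultrahomogeneity together with the invariance of $S$ allows this vertex to be pulled back into $R$; thus $R$ is itself a copy of the Rado $n$-hypergraph, proving the claim.

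For the second assertion, fix $k$ with $1\leq k<n$; the case $k=1$ is trivial, so assume $2\leq k<n$, let $c\colon[A]^k\longrightarrow 2$ be an arbitrary colouring in $\hs$ with finite support $F$, and choose $m$ with $A_m\cap F=\varnothing$. Then $c\upharpoonright[A_m]^k$ is invariant under every automorphism of $A_m$; but since $k<n$ no $k$-element subset of $A_m$ can contain an edge, so all $k$-element subsets of $A_m$ induce the same (empty) sub-hypergraph, whence by ultrahomogeneity $\aut(A_m)$ acts transitively on $[A_m]^k$ and $c\upharpoonright[A_m]^k$ must be constant. As $A_m$ is an infinite element of $\hs$, the set $Y=A_m$ witnesses \ra{k}-infiniteness of $A$, completing the argument. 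The hard part will be the key claim of the first assertion — specifically the sub-case in which $Y$ has infinite intersection with one of the finitely many pieces that meet its support, where one must combine the extension property of the Rado $n$-hypergraph, ultrahomogeneity, and the support of $Y$ in concert to upgrade a fixed-type infinite subset of $Y\cap A_m$ to a genuine copy of the hypergraph.
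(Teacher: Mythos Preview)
Your proposal is correct, and for the ``moreover'' part it coincides with the paper's argument essentially word for word. For the first assertion you use the same colouring as the paper and the same underlying tools (the extension property and ultrahomogeneity of the Rado $n$-hypergraph), but you take a longer route: you first prove the auxiliary ``key claim'' that every infinite symmetric $Y\subseteq A$ contains a full copy of the Rado $n$-hypergraph inside a single $A_m$, and only then extract the two $k$-tuples. The paper bypasses this entirely. It simply picks any $a\in X\setminus F$, lets $m$ be the index with $a\in A_m$ and $F'=F\cap A_m$, and then recursively builds $a_1=a,a_2,\ldots,a_k$ (pairwise with no hyperedges among them) and $b_1=a,b_2,\ldots,b_k$ (forming a complete $n$-hypergraph), each new vertex chosen via the extension property to have the same $(n-1)$-adjacencies over $F'$ as $a$; ultrahomogeneity then gives automorphisms fixing $F$ and sending $a$ to each $a_l,b_l$, so all of them lie in $X$, and the two $k$-sets witness both colours.

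In particular, the case split you flag as the ``hard part'' --- when $Y$ is contained in the finitely many pieces meeting its support --- is a self-imposed complication: the paper never distinguishes whether $A_m$ meets $F$ or not, since the single atom $a$ already determines the piece and the construction proceeds uniformly over $F'=F\cap A_m$ (possibly empty). Your key claim is true and your sketch of it is sound (with the understanding that $R$ consists of vertices of type $t$ \emph{outside} $F\cap A_m$, so that the type requirements and the prescribed adjacencies over $[R]^{n-1}$ never conflict), but it proves more than is needed.
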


\begin{proof}
Let $k\geq n$. To see that $A$ is \ra{k}-finite, we will essentially use the same colouring as in Proposition~\ref{2radoatomsareh2finite}. In other words, we let $c:[A]^k\longrightarrow 2$ be given by $c(x)=1$ if and only if for some $m<\omega$, $x\subseteq A_m$ and there is a $y\in[x]^n$ such that $y$ is an $n$-hyperedge of the $n$-hypergraph at $A_m$. We will show that for no infinite $X\subseteq A$ can we have that $[X]^n$ is monochromatic. To see this, assume that we have such a set $X$ within our model, say with support $F\subseteq A$. Choose an $a\in X\setminus F$ and let $m<\omega$ be such that $a\in A_m$. Now let $F'=F\cap A_m$. We recursively build atoms $a=a_1,a_2,\ldots,a_k\in A_m$ and $a=b_1,b_2,\ldots,b_k\in A_m$ as follows: suppose that we already know $a_1,\ldots,a_l$ and $b_1,\ldots,b_l$ for $l<k$. We let $F_0=\{x\in [F']^{n-1}\big|x\cup\{a\}\text{ is not a hyperedge}\}$ and $F_1=\{x\in [F']^{n-1}\big|x\cup\{a\}\text{ is a hyperedge}\}$. We take any $a_{l+1}\in A_m$ such that $\{a_{l+1}\}\cup x$ is not a hyperedge for all $x\in F_0\cup[\{a_1,\ldots,a_l\}]^{n-1}$ and $\{a_{l+1}\}\cup x$ is a hyperedge for all $x\in F_1$; analogously we take any $b_{l+1}$ such that $\{b_{l+1}\}\cup x$ is not a hyperedge for all $x\in F_0$ and $\{b_{l+1}\}\cup x$ is a hyperedge for all $x\in F_1\cup[\{b_1,\ldots,b_l\}]^{n-1}$. This construction ensures two things:
\begin{itemize}
\item for every $l\leq k$, the subgraph of $A_m$ induced by the vertex set $F'\cup\{a\}$ is isomorphic to that induced by $F'\cup\{a_l\}$ and also to that induced by $F'\cup\{b_l\}$, in both cases via an isomorphism that fixes $F'$;
\item the subgraph of $A_m$ induced by the vertex set $\{a_1,\ldots,a_k\}$ is independent (i.e. it has no hyperedges) whereas that induced by $\{b_1,\ldots,b_k\}$ is complete (i.e. every $n$-sized subset is a hyperedge).
\end{itemize}
Consequently, we have that $c(\{a_1,\ldots,a_k\})=0$ and $c(\{b_1,\ldots,b_k\})=1$ (notice that the hypergraph induced by $\{b_1,\ldots,b_k\}$ will have at least one hyperedge because $n\leq k$), so as soon as we can show that each $a_l$ and each $b_l$ are elements of $X$, this will prove that $[X]^k$ is not monochromatic for $c$. The first bullet point above ensures that there are $\pi_l,\sigma_l\in H_n$, for $1\leq l\leq k$, such that $\pi_l$ fixes pointwise all $A_t$ with $t\neq m$ and $\pi_l\upharpoonright A_m$ is an automorphism fixing $F'$ and mapping $a$ to $a_l$, and similarly $\sigma_l$ fixes pointwise all $A_t$ with $t\neq m$ and $\sigma_l\upharpoonright A_m$ is an automorphism fixing $F'$ and mapping $a$ to $b_l$. Hence each of the $\pi_l$ and $\sigma_l$ fix $F$ pointwise, and thus $\pi_l(X)=X=\sigma_l(X)$, which implies that $X$ contains $a_l=\pi_l(a)$ and $b_l=\sigma_l(a)$ as elements. This finishes the proof that $[X]^k$ cannot be monochromatic for $c$, and therefore the set $A$ is \ra{k}-finite.

Now for the ``moreover'' part, assume that $n\geq 3$ and let $k<n$. Suppose that we have a colouring $c:[A]^k\longrightarrow 2$ within the model, then this colouring has a finite support $F\subseteq A$. Let $m$ be large enough that $A_m\cap F=\varnothing$. If we have any two $k$-element subsets of $A_m$, $x$ and $y$, notice that the corresponding induced subgraphs are isomorphic ---both hypergraphs contain no edges since they have less than $n$ vertices---. Hence by ultrahomogeneity, there is an automorphism of $A_m$ mapping $x$ to $y$, and hence a permutation $\pi$ of $A$ extending this automorphism and fixing all $A_l$ for $l\neq m$. In particular $\pi$ fixes each element of $F$ and therefore we have that $(y,c(x))=(\pi(x),\pi(c(x)))=\pi((x,c(x)))\in\pi(c)=c$, meaning that $c(y)=c(x)$. This shows that the infinite subset $A_m$ of $A$ is monochromatic, and therefore $A$ is \ra{k}-infinite. This finishes the proof.
\end{proof}

\begin{corollary}
Modulo the theory $\zf$, for every $2\leq n<m$ we have that, in general, \ra{m}-finite does not imply \r-finite. 
\end{corollary}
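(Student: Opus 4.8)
The plan is simply to specialize Proposition~\ref{nradomodel} to the appropriate Rado model; the Corollary is an immediate consequence of the work already done there. Fix integers with $2\leq n<m$; since $m>n\geq 2$ we automatically have $m\geq 3$, so (with the role of ``$n$'' in the statement of Proposition~\ref{nradomodel} played by our $m$) both halves of that proposition apply. First I would pass to the $m$-Rado model and consider the set $A$ of atoms. The first assertion of Proposition~\ref{nradomodel} says that $A$ is \ra{k}-finite for every $k\geq m$; taking $k=m$ gives that $A$ is \ra{m}-finite. The ``moreover'' assertion of Proposition~\ref{nradomodel} says that $A$ is \ra{k}-infinite for every $k<m$; taking $k=n$, which is permissible precisely because $n<m$, gives that $A$ is \r-infinite.

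Thus in the $m$-Rado model the set of atoms is a set that is \ra{m}-finite but not \r-finite. As in the general discussion opening this section, the existence of such a set within $\wp^{11}(A)$ transfers, via the Jech--Sochor theorem, to a genuine model of $\zf$, so no further argument is required. The only point that warrants a moment's attention is the bookkeeping of the index ranges: one should note that the hypothesis $2\leq n<m$ indeed forces $m\geq 3$ (so that the ``moreover'' clause of Proposition~\ref{nradomodel} is applicable), and that $n$ falls in the range $k<m$ while $m$ falls in the range $k\geq m$ — both of which are immediate. There is no real mathematical obstacle here; all of the substance has already been carried out in the proof of Proposition~\ref{nradomodel}, and the present statement is just the result of reading off the two conclusions of that proposition at two different values of the superscript.
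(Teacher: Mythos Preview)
Your proposal is correct and is exactly the approach the paper intends: the corollary is stated immediately after Proposition~\ref{nradomodel} without a separate proof, and your argument---passing to the $m$-Rado model, reading off that $A$ is \ra{m}-finite and \r-infinite, then invoking the Jech--Sochor transfer---is precisely the inference the reader is meant to make.
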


In view of the previous corollary, it is natural to ask whether it is possible to have an implication from \ra{m}-finite to \ra{n}-finite, when $n<m$, in the presence of some extra hypotheses. For example, Proposition~\ref{prop:rn-to-rn+1} shows that the reverse implication holds for linearly orderable sets; it is therefore natural to ask whether \ra{m}-finite implies \ra{n}-finite, for $n<m$, when assuming linear orderability of the relevant sets. We finish this section by answering this question in the negative; in fact, we will show the much stronger statement that \ra{m}-finite does not imply \ra{n}-finite even if one assumes the Boolean Prime Ideal theorem. Recall that the Boolean Prime Ideal Theorem (form 14 in Howard and Rubin's book~\cite{howard-rubin}) states that every Boolean algebra has a maximal (equivalently, prime) ideal. This weak choice principle is equivalent to various other well-known statements (such as the ultrafilter theorem, Tychonoff's theorem for Hausdorff spaces, the compactness theorem for propositional logic, the Erd\H{o}s--de Bruijn theorem, among others) and it implies---and is therefore stronger than---the statement that every set can be linearly ordered. We will henceforth denote the Boolean Prime Ideal theorem by $\bpi$.

In order to get a model where $\bpi$ holds, we consider the {\bf linearly ordered Rado $n$-hypergraph}. This is a countably infinite structure equipped both with a linear order (with respect to which the structure is isomorphic to $(\mathbb Q,\leq_{\mathbb Q})$) and at the same time with an $n$-hypergraph structure; in this context, an automorphism of the structure is understood to be a bijection that respects both the linear order and the hypergraph structure. The linearly ordered Rado $n$-hypergraph can be thought of as the Fra\"{\i}ss\'e limit of the class of all finite linearly ordered $n$-hypergraphs, as described in~\cite[p. 110--111]{kechris-pestov-todorcevic}. The first key property of this graph $G_n$ is that, given any two disjoint finite sets $F_0,F_1\subseteq[G_n]^{n-1}$, it is always possible to find a vertex $v\in G_n$ such that $\{v\}\cup x$ is not a hyperedge for $x\in F_0$ and $\{v\}\cup x$ is a hyperedge for $x\in F_1$, and furthermore the vertex $x$ can be found in any desired previously prescribed position, with respect to the linear order, in relation to the vertices from $F_0\cup F_1$. The second main property is ultrahomogeneity: if $F_0,F_1$ are two finite sets of vertices such that there is an (order-preserving) isomorphism between the corresponding induced subgraphs, then there is an (order-preserving) automorphism of $G_n$ extending the original finite isomorphism.

The following definitions are entirely analogous to Definition~\ref{radomodel}, except we now take into account the additional linear order structure.

\begin{definition}\label{def-lo-rado-model}
Work in a model of $\zfa$ with a countable set of atoms $A$, and let $n\in\mathbb N\setminus\{1\}$. Partition $A$ as a countable union $\bigcup_{m<\omega}A_m$ of countable sets, and equip each of the $A_m$ with the structure of a linearly ordered Rado $n$-hypergraph.
\begin{enumerate}
\item The group of permutations $H_n'\leq S_A$ is defined as follows:
\begin{equation*}
H_n'=\{\pi\in S_A\big|(\forall m<\omega)(\pi\upharpoonright A_m\text{ is an automorphism of }A_m)\}
\end{equation*}
(by automorphism of $A_m$ we mean an automorphism with respect to both the linear order and the hypergraph structure).
\item The \textbf{linearly ordered $n$-Rado model} is the class $\hs$ of all hereditarily finite sets with respect to the group $H_n'$.
\end{enumerate}
\end{definition}

The model just defined bears significant similarities with the ones from Definition~\ref{radomodel}. In particular, we immediately get the following proposition.

\begin{proposition}
Let $n\geq 2$. In the linearly ordered $n$-Rado model, the set $A$ of atoms is \ra{k}-finite for every $k\geq n$ and, if $n=2$, then $A$ is \hi{2}-finite, whereas if $n\geq 3$ then $A$ is \ra{k}-infinite for all $k<n$.
\end{proposition}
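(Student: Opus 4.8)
The plan is to observe that the linearly ordered $n$-Rado model is obtained from the $n$-Rado model of Definition~\ref{radomodel} merely by adding a linear order to each block $A_m$ and correspondingly restricting the permutation group from $H_n$ to its subgroup $H_n'$ of order-preserving automorphisms. Since all the required properties of the linearly ordered Rado $n$-hypergraph (the two-coloured extension property for disjoint finite $F_0,F_1\subseteq[G_n]^{n-1}$, and ultrahomogeneity) were explicitly recorded above, essentially the same arguments as in Propositions~\ref{2radoatomsareh2finite} and~\ref{nradomodel} will go through; the only point requiring care is that every permutation we produce must now be order-preserving on each block, which is exactly what the strengthened extension and ultrahomogeneity properties of $G_n$ guarantee.

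Concretely, for the claim that $A$ is \ra{k}-finite for $k\geq n$, I would reuse verbatim the colouring $c:[A]^k\longrightarrow 2$ from Proposition~\ref{nradomodel} (with $c(x)=1$ iff $x\subseteq A_m$ for some $m$ and some $n$-subset of $x$ is a hyperedge). Given an infinite $X\subseteq A$ in the model with finite support $F$, pick $a\in X\setminus F$, let $m$ be such that $a\in A_m$, set $F'=F\cap A_m$, and recursively build $a=a_1,\dots,a_k$ and $a=b_1,\dots,b_k$ in $A_m$ using the extension property of the linearly ordered Rado $n$-hypergraph; here one additionally prescribes the linear-order position of each new vertex so that the induced-subgraph isomorphisms one needs are order-preserving, which is permitted by the first key property. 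The two bullet points of Proposition~\ref{nradomodel} then hold as before: $\{a_1,\dots,a_k\}$ induces an independent set and $\{b_1,\dots,b_k\}$ a complete one, so $c(\{a_1,\dots,a_k\})=0\neq 1=c(\{b_1,\dots,b_k\})$. Ultrahomogeneity (now of the ordered structure) supplies $\pi_l,\sigma_l\in H_n'$ fixing $F'$ (hence $F$) pointwise, identity off $A_m$, sending $a$ to $a_l$, resp.\ $b_l$; thus $a_l=\pi_l(a)\in X$ and $b_l=\sigma_l(a)\in X$, and $[X]^k$ is not $c$-monochromatic.

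For the case $n=2$ one similarly transcribes the proof of Proposition~\ref{2radoatomsareh2finite}, using the colouring $c:[A]^{<\omega}\longrightarrow 2$ with $c(x)=1$ iff $x\subseteq A_m$ for some $m$ and some two elements of $x$ are adjacent; given an infinite symmetric $Y\subseteq[A]^{<\omega}$ supported by finite $F$, pick $y\in Y$ with $y\not\subseteq F$, $a\in y\setminus F$, $m$ with $a\in A_m$, $F'=(F\cup y)\cap A_m$, split $F'=F_0\cup F_1$ by non-adjacency/adjacency to $a$, and use the ordered extension property to find $b$ adjacent to $a$ and $c$ non-adjacent to $a$, both realizing the same pattern over $F'$ as $a$; order-preserving ultrahomogeneity yields $\pi,\sigma\in H_n'$ with $z=\pi(y)=(y\setminus\{a\})\cup\{b\}\in Y$ and $w=\sigma(y)=(y\setminus\{a\})\cup\{c\}\in Y$, so $\{a,b\}=y\bigtriangleup z$ and $\{a,c\}=y\bigtriangleup w$ lie in $\fs_{\leq 2}(Y)$ with distinct colours. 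Finally, for the ``moreover'' part when $n\geq 3$ and $k<n$, one takes any colouring $c:[A]^k\longrightarrow 2$ with finite support $F$, picks $m$ with $A_m\cap F=\varnothing$, and notes that any two $k$-element subsets of $A_m$ induce isomorphic (edgeless, since $k<n$) ordered subgraphs; order-preserving ultrahomogeneity gives an automorphism of $A_m$, extended to $\pi\in H_n'$ fixing $F$, carrying one to the other, so $A_m$ is $c$-monochromatic and $A$ is \ra{k}-infinite. I expect no genuine obstacle here—the work is entirely in verifying that the extra order-preservation constraint is compatible with each combinatorial move, which is precisely what the recalled properties of the Fra\"{\i}ss\'e limit deliver.
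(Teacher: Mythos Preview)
Your proposal is correct and takes essentially the same approach as the paper: the paper's own proof is the single line ``Exactly as in the proofs of Propositions~\ref{nradomodel} and~\ref{2radoatomsareh2finite}'', and what you have written is precisely a careful unpacking of that sentence, with the one extra observation (which you make correctly) that the strengthened extension and ultrahomogeneity properties of the linearly ordered Rado $n$-hypergraph allow one to prescribe the order position of each new vertex so that every finite isomorphism used is order-preserving and hence extends to an element of $H_n'$.
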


\begin{proof}
Exactly as in the proofs of Propositions~\ref{nradomodel} and~\ref{2radoatomsareh2finite}.
\end{proof}

Now, the key reason why we introduced these models is contained in the following theorem.

\begin{theorem}\label{linearlyorderedmodel}
For each $n\in\mathbb N\setminus\{1\}$, the linearly ordered $n$-Rado model satisfies $\bpi$.
\end{theorem}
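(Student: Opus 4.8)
The plan is to verify $\bpi$ in the form of the compactness theorem for propositional logic (listed above as one of the statements equivalent to $\bpi$ over $\zf$): every finitely satisfiable set of propositional formulas has a satisfying truth assignment. So, working inside the permutation model $\hs$ of the linearly ordered $n$-Rado model, I would fix a set $V$ of propositional variables and a set $\Sigma$ of propositional formulas over $V$ such that every finite subset of $\Sigma$ is satisfiable, together with a finite support $E\subseteq A$ for the pair $(V,\Sigma)$ (and for the ``which variable occurs in which formula'' relation). The goal is to produce, inside $\hs$, an assignment $v\colon V\to 2$ with $v\models\Sigma$; it suffices to find such a $v$ that is itself supported by $E$, since any such $v$ automatically lies in $\hs$.

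The key structural point is that, since $E$ is finite and each block $A_m$ carries the ultrahomogeneous structure of a linearly ordered Rado $n$-hypergraph, the pointwise stabilizer $\fix(E)=\{\pi\in H_n'\mid\pi\restriction E=\mathrm{id}\}$ acts with only finitely many orbit-types on tuples of bounded length: the $\fix(E)$-orbit of a tuple of atoms is determined by its quantifier-free type over $E$ (with respect to the partition into blocks, the linear orders, and the hypergraph relations), where for the blocks that do not meet $E$ only the isomorphism type of the induced finite substructure matters, because distinct $E$-free blocks are interchangeable and each $A_m$ is ultrahomogeneous. Consequently $\Sigma$ is a union of $\fix(E)$-orbits of formulas, and any $E$-supported assignment $v$ is constant on each $\fix(E)$-orbit of single variables; there being only finitely many such orbits, $v$ is coded by finitely much data, so there are only finitely many candidate $E$-supported assignments $v_1,\dots,v_N$. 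If none of them satisfies $\Sigma$, pick $\varphi_j\in\Sigma$ with $v_j\not\models\varphi_j$, and let $\Sigma'\subseteq\Sigma$ be the union of the $\fix(E)$-orbits of $\varphi_1,\dots,\varphi_N$. Then $\Sigma'$ is still finitely satisfiable; if one can find an $E$-supported assignment satisfying $\Sigma'$, it must be one of the $v_k$, which then satisfies every $\varphi_j$, including $\varphi_k$ --- contradicting the choice of $\varphi_k$. Hence it remains to prove that a finitely satisfiable set of formulas which is a union of finitely many $\fix(E)$-orbits admits an $E$-supported satisfying assignment.

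To prove that last statement, I would choose a large finite $\Sigma''\subseteq\Sigma'$ meeting every relevant ``profile'' of formulas in all of its configurations inside a fixed, sufficiently large finite union of blocks; fix an arbitrary (a priori non-symmetric) assignment $w$ satisfying $\Sigma''$; and then homogenize $w$ using the Ne\v{s}et\v{r}il--R\"odl Ramsey theorem for finite linearly ordered $n$-hypergraphs (see~\cite[p.~110--111]{kechris-pestov-todorcevic}), applied inside each of the finitely many relevant blocks, together with a pigeonhole argument across those blocks. This produces, inside each block, a large induced sub-hypergraph isomorphic to an initial piece of $A_m$ on which $w$ depends only on the type over $E$; transporting this ``monochromatic'' picture everywhere by automorphisms of the $A_m$ --- here one uses both the one-point extension property with prescribed position and the ultrahomogeneity of the linearly ordered Rado $n$-hypergraph --- yields an $E$-supported assignment $v$ that agrees with a homogenized copy of $w$ on $\Sigma''$, hence satisfies $\Sigma''$, and therefore (since every formula of $\Sigma'$ is a $\fix(E)$-translate of one in $\Sigma''$ and $v$ is $\fix(E)$-invariant) satisfies all of $\Sigma'$. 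The main obstacle is exactly this homogenization: one must organize the profile bookkeeping so that the Ramsey extraction can be carried out uniformly across all relevant blocks and so that the resulting monochromatic configurations remain rich enough to embed the finite fragment of $A_m$ needed to reconstruct a genuinely $E$-supported assignment. This is the step that uses in an essential way that the blocks carry an ordered Fra\"{\i}ss\'e (Ramsey) structure rather than an arbitrary one, and it runs parallel to Halpern's classical argument that Mostowski's linearly ordered model satisfies $\bpi$, with Ramsey's theorem on the dense linear order replaced throughout by the Ne\v{s}et\v{r}il--R\"odl theorem on finite linearly ordered $n$-hypergraphs.
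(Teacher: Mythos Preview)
Your overall strategy---verify $\bpi$ by a direct Halpern-style compactness argument, with the Ne\v{s}et\v{r}il--R\"odl theorem playing the role that Ramsey's theorem plays in Mostowski's model---is in the right spirit, and morally it is the combinatorial content underlying the paper's proof. However, the reduction you describe contains a genuine error. You claim that, since the $\fix(E)$-orbit of a tuple of atoms is determined by its quantifier-free type over $E$, there are only finitely many $\fix(E)$-orbits of \emph{variables} in $V$, and hence only finitely many $E$-supported assignments $v\colon V\to 2$. This is false for general $V$: the variables are arbitrary hereditarily symmetric sets, not tuples of atoms. For a trivial counterexample take $V=\omega$; this is a pure set with empty support, every permutation fixes it pointwise, so each $n\in\omega$ is its own $\fix(E)$-orbit, and there are continuum-many $E$-supported assignments. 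The oligomorphicity of $H_n'$ controls orbits on $A^k$, not on arbitrary sets in $\hs$. Consequently your ``pick a bad $\varphi_j$ for each of the finitely many $v_j$'' step collapses, and with it the reduction to a $\Sigma'$ consisting of finitely many orbits of formulas. A correct direct argument must handle unboundedly many orbit-types of variables and formulas simultaneously; this is exactly the bookkeeping that makes Halpern--L\'evy style proofs delicate, and your sketch does not supply it.

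By contrast, the paper's proof avoids all of this by quoting two black boxes. First, the automorphism group of the linearly ordered Rado $n$-hypergraph is extremely amenable by the Kechris--Pestov--Todor\v{c}evi\'c correspondence (this is where the Ne\v{s}et\v{r}il--R\"odl Ramsey property enters, but packaged abstractly); a countable product of extremely amenable groups is extremely amenable, so $H_n'$ is too. Second, Blass's theorem~\cite[Theorems~5.1 and~5.2]{ramseyactions} says that any Fraenkel--Mostowski model built from an extremely amenable group using the filter of open subgroups (which is precisely the finite-support filter here) satisfies $\bpi$. So the Ramsey content you are trying to deploy by hand is already encapsulated in the extreme amenability statement, and Blass's transfer theorem handles exactly the passage from ``Ramsey on atoms'' to ``$\bpi$ for arbitrary sets in $\hs$'' that your argument stumbles on.
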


\begin{proof}
The automorphism group $\aut(G_n)$ of the linearly ordered $n$-Rado hypergraph is extremely amenable, as seen in~\cite[p. 110-111]{kechris-pestov-todorcevic}; the groups $H_n'$ used in Definition~\ref{def-lo-rado-model} are isomorphic to a direct product of countably many copies of $\aut(G_n)$ and so, by~\cite[Lemma 6.7 (iii)]{kechris-pestov-todorcevic} these groups are extremely amenable too. The fact that the linearly ordered $n$-Rado model is defined using finite supports means that, in the terminology of~\cite[Section 4.2]{jech-choice}, the normal filter of subgroups we are using is precisely the filter of open subgroups of $H_n'$ (viewed as a subgroup of $S_\omega$ with the pointwise convergence topology). Therefore, by~\cite[Theorems~5.1 and~5.2]{ramseyactions}, the Fraenkel--Mostowski model that we obtain (namely the linearly ordered $n$-Rado model) satisfies $\bpi$ plus the negation of the Axiom of Choice.
\end{proof}

Theorem~\ref{linearlyorderedmodel} has two important corollaries regarding independence in $\zf$.

\begin{corollary}\label{bpi-model}
Modulo the theory $\zf+\bpi$, for every $2\leq n<m$ we have that, in general, \ra{m}-finite does not imply \r-finite. In particular, \ra{m}-finite does not imply \r-finite even for linearly orderable sets.
\end{corollary}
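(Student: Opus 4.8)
The plan is to read the corollary off the linearly ordered version of the Rado model, with the arity of the hypergraph set to the \emph{larger} of the two indices. Fix integers $2\le n<m$, and observe first that $m\ge 3$ automatically; this is exactly the side condition appearing in the ``moreover'' clause of Proposition~\ref{nradomodel} and of the proposition immediately preceding Theorem~\ref{linearlyorderedmodel}. Now consider the linearly ordered $m$-Rado model: the model of Definition~\ref{def-lo-rado-model} obtained by partitioning the atoms $A=\bigcup_{j<\omega}A_j$ into countably many countable pieces and endowing each $A_j$ with the structure of a linearly ordered Rado $m$-hypergraph.

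In this model I would apply the proposition preceding Theorem~\ref{linearlyorderedmodel} (with its parameter, there called ``$n$'', instantiated as our $m$): the set $A$ of atoms is \ra{k}-finite for every $k\ge m$, hence in particular \ra{m}-finite, and --- because $m\ge 3$ --- it is also \ra{k}-infinite for every $k<m$, hence in particular (as $n<m$) \ra{n}-infinite. Thus, inside this permutation model, $A$ is an \ra{m}-finite, \ra{n}-infinite set. By Theorem~\ref{linearlyorderedmodel} the same model satisfies $\bpi$; since $\bpi$ implies $\lo$, every set there, including $A$, is linearly orderable --- one could also exhibit the order directly, since the indexing $j\mapsto A_j$ together with the $\mathbb Q$-like orders on the $A_j$ assembles, with $\varnothing$ as a support, into a symmetric linear order of $A$.

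To conclude I would invoke the transfer machinery outlined at the beginning of Section~\ref{Sect:Independence}: the conjunction ``\ra{m}-finite, \ra{n}-infinite, and linearly orderable'' is a statement about $A$ decided within $\wp^{11}(A)$ (the $11$ again being overkill), and, exactly as in the proof of Theorem~\ref{linearlyorderedmodel}, the symmetric extension furnished by the cited results of~\cite{ramseyactions} is a model $\mathbf W$ of $\zf+\bpi$ whose $\wp^{11}$-structure over an appropriate set is $\in$-isomorphic to $(\wp^{11}(A))^{\hs}$ (this last step being the Jech--Sochor-type argument sketched in that section). Hence $\mathbf W\models\zf+\bpi$ and contains a linearly orderable set that is \ra{m}-finite but \ra{n}-infinite, which is precisely the assertion of the corollary, the final sentence being immediate from the linear orderability of the witness.

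There is essentially no hard step here: all of the content is already packaged in Theorem~\ref{linearlyorderedmodel} and in the proposition on the (linearly ordered) $n$-Rado models. The only points demanding care are bookkeeping ones --- choosing the hypergraph arity to be $m$, not $n$, so that $A$ becomes \ra{m}-finite while staying \ra{n}-infinite (which is exactly why the harmless hypothesis $m\ge 3$ is used), and making sure the passage from the permutation model to a genuine $\zf$ model carries $\bpi$ along, which is the same issue already resolved in Theorem~\ref{linearlyorderedmodel}.
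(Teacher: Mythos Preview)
Your identification of the right permutation model and the properties of $A$ therein is correct: in the linearly ordered $m$-Rado model, $A$ is \ra{m}-finite, \r-infinite, and linearly orderable, and by Theorem~\ref{linearlyorderedmodel} the model satisfies $\bpi$. The gap is in the transfer step. The Jech--Sochor theorem outlined at the start of Section~\ref{Sect:Independence} only transfers statements decided within some bounded iterated powerset $\wp^\alpha(A)$; it does \emph{not} transfer $\bpi$, which is a global statement quantifying over all Boolean algebras in the universe. Nor does the proof of Theorem~\ref{linearlyorderedmodel} furnish any symmetric extension: the results of~\cite{ramseyactions} it cites establish $\bpi$ in the permutation model $\hs$ only, not in any model of $\zf$. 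So your sentence ``the symmetric extension furnished by the cited results of~\cite{ramseyactions} is a model $\mathbf W$ of $\zf+\bpi$'' has no backing.

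This is precisely why the paper invokes Pincus's transfer theorem~\cite{pincus-transfer,pincus-add-dep-choice,pincus-add-dep-choice-to-pi} instead of Jech--Sochor: Pincus's theorem is designed to transfer $\bpi$ together with any conjunction of injectively boundable statements, and ``there exists an \ra{m}-finite, \r-infinite set'' is boundable (hence injectively boundable). Your route would salvage the ``in particular'' clause---linear orderability of the witness is boundable and so does go through Jech--Sochor---but not the headline assertion that the non-implication is consistent with $\zf+\bpi$.
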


\begin{proof}
Recall that a formula $\varphi(x_1,\ldots,x_n)$ is {\em boundable}, in the sense of Jech and Sochor~\cite{jech-sochor}, if there is an ordinal $\alpha$ (more generally, some absolute ordinal-valued term that depends on $x_1,\ldots,x_n$) such that for all $x_1,\ldots,x_n$, $\varphi(x_1,\ldots,x_n)$ holds if and only if $\wp^\alpha(x_1\cup\cdots\cup x_n)\vDash\varphi(x_1,\ldots,x_n)$; a {\em boundable statement} is the existential closure of a boundable formula. The statement ``there exists a set that is \ra{m}-finite and \r-infinite'' is boundable, and in particular it is injectively boundable as defined by Pincus~\cite[2A5, p. 736]{pincus-transfer}. Thus by a general transfer theorem of Pincus (stated by pieces in~\cite[Theorem 4 and note in p. 145]{pincus-add-dep-choice}, see also~\cite[p. 547]{pincus-add-dep-choice-to-pi}; for a full statement of the transfer theorem see~\cite[p. 286]{howard-rubin}), the conjunction of this statement with $\bpi$ is transferable. In other words, from the existence of a Fraenkel--Mostowski model satisfying $\bpi$ together with the existence of a \ra{m}-finite and \r-infinite set, the existence of a model of $\zf$ satisfying the same statement follows.
\end{proof}

\begin{corollary}
Modulo the theory $\zf$, in general neither \hi{2}-finite nor \r-finite (for any $n\geq 2$) imply B-finite. Consequently, the notion of B-finite is independent from each of the notions of \r-finite ($n\geq 2$) and \hi{2}-finite.
\end{corollary}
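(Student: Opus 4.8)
The plan is to read this result off the linearly ordered Rado model already constructed in this section, together with non-implications recorded earlier. First I would work in the linearly ordered $n$-Rado model of Definition~\ref{def-lo-rado-model} with $n=2$; by the proposition preceding Theorem~\ref{linearlyorderedmodel}, in this model the set $A$ of atoms is \hi{2}-finite and \ra{k}-finite for every $k\geq 2$, and hence \r-finite for every $n\in\mathbb N\setminus\{1\}$. Thus a single set, namely $A$, already carries all of the finiteness we need, and the only thing left to verify is that this same $A$ is B-\emph{infinite} in this model.

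For that I would appeal to Theorem~\ref{linearlyorderedmodel}: the linearly ordered Rado model satisfies $\bpi$, which implies the linear ordering principle $\lo$, so in this model every infinite set is linearly orderable. Since a set is B-finite precisely when none of its infinite subsets is linearly orderable, the model contains no infinite B-finite set at all, and in particular $A$ is B-infinite. (If one prefers to avoid the implication $\bpi\Rightarrow\lo$, one can write the linear order down directly: order $A$ by placing the blocks $A_0<A_1<\cdots$ in order of index and, inside each block, using the linear order that $A_m$ carries; every $\pi\in H_2'$ fixes each $A_m$ setwise and acts on it as an order-automorphism, so this relation is fixed by $H_2'$, hence hereditarily symmetric with empty support, and it is plainly a linear order of the infinite set $A$.)

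Next I would transfer to $\zf$. As explained in the preamble to this section, and as carried out in the proof of Corollary~\ref{bpi-model}, membership of a set in each of the relevant finiteness classes (being \hi{2}-finite, being \r-finite, being B-finite) is a boundable property, decided inside a small power set of that set; so the statements ``there exists an \hi{2}-finite, B-infinite set'' and ``there exists an \r-finite, B-infinite set'' both hold in $\wp^{11}(A)$ in the linearly ordered Rado model, and hence, by the Jech--Sochor transfer theorem, in some model of $\zf$ (indeed, by the Pincus transfer theorem invoked for Corollary~\ref{bpi-model}, in a model of $\zf+\bpi$). This gives ``\hi{2}-finite does not imply B-finite'' and ``\r-finite does not imply B-finite'' for every $n\geq 2$. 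For the independence statement I would combine these with the reverse non-implications already recorded in item~(2) of Corollary~\ref{conclusionsfirstfrankel}, namely that B-finite implies neither \hi{2}-finite nor \ra{n}-finite for any $n\geq 2$.

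I do not expect a genuine obstacle here: essentially all of the work already sits in the proposition preceding Theorem~\ref{linearlyorderedmodel} and in Theorem~\ref{linearlyorderedmodel} itself. The one point that needs a little care is confirming that the very set $A$ witnessing \hi{2}-finiteness and \r-finiteness is B-infinite in the \emph{same} model; but this is immediate, because that model satisfies $\lo$ and therefore contains no infinite B-finite set.
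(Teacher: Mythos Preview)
Your proposal is correct and follows essentially the same route as the paper: use the linearly ordered $2$-Rado model, where $A$ is simultaneously \hi{2}-finite and \ra{k}-finite for all $k\geq 2$, invoke $\bpi$ (hence $\lo$) to see that $A$ is B-infinite, transfer to $\zf$, and then cite Corollary~\ref{conclusionsfirstfrankel}(2) for the reverse non-implications. If anything, you are slightly more explicit than the paper in singling out the $n=2$ model (which is the one needed for the \hi{2}-finite claim) and in offering the direct construction of a linear order on $A$ as an alternative to the $\bpi\Rightarrow\lo$ step.
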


\begin{proof}
The models of $\zf$ obtained in Corollary~\ref{bpi-model} all satisfy $\bpi$ and, in particular, every set can be linearly ordered in these models. Thus every infinite set is B-infinite in these models, and so neither \r-finite nor \hi{2}-finite imply B-finite; now just invoke Corollary~\ref{conclusionsfirstfrankel} (2) for the unprovability of the reverse implications.
\end{proof}

\section{Open questions}

We would like to close this paper by mentioning a few questions that are not answered here, but naturally suggest themselves after the results obtained.

\begin{question}\hfill
\begin{enumerate}
\item Is there a model of $\zfa$ with a set which is H-finite but \hi{3}-infinite?
\item Does \ra{n}-finite imply \ra{m}-finite whenever $n<m$, in the absence of any further assumptions (or at least with weaker assumptions than the linear orderability) about the relevant set?
\item Given $n\geq 3$, is it the case that \hi{2}-finite implies \r-finite?
\item How do the notions of H-finite and its variations, or of \r-finite, change if one allows for colourings with more than two colours in the definition?
\item What interesting things can one say about the analogous notions of finiteness arising from Gowers's $\mathrm{Fin}_k$ theorem?
\end{enumerate}
\end{question}

\section*{Acknowledgements}

Most of the work in this paper was carried out during the summer of 2018, as part of the REU (Research Experience for Undergraduates) program at the Department of Mathematics, University of Michigan, where the first two authors were advised by the third. Within this context, the first author was supported by NSF grant number DMS-1501625, whereas the second author was supported by the University of Michigan Mathematics Department Strategic Fund; afterwards, during the final stages of preparation of this paper, the third author was supported by grant FORDECYT \# 265667 from the Consejo Nacional de Ciencia y Tecnolog\'{\i}a. The authors are also grateful to Lorenzo Carlucci for reading a previous version of this paper and pointing out some useful references, as well as to Fernando N\'u\~nez Rosales for pointing out that the linearly ordered $n$-Rado model satisfies $\bpi$ (in an earlier version of this paper, we only claimed that this model satisfied the weaker Linear Ordering principle, which we tortuously proved using ideas from~\cite{mostowski-linear-orders} rather than the much cleaner argument from this version).

\end{document}